\newcommand{\pro}{\mathrm{pro}}
\newcommand{\atan}{\mathrm{atan}}
\definecolor{ourrow}{RGB}{235,245,255}
\newcommand{\range}{\mathrm{range}}
\DeclareMathOperator{\sign}{sign}
\DeclareMathOperator{\Var}{Var}
\newtcolorbox{propbox}[1][]{
  title={#1},
  enhanced,
  sharp corners,
}
\DeclareMathOperator{\rank}{rank}
\renewcommand{\ICML@appearing}{}
\renewcommand{\Notice@String}{}
\theoremstyle{plain}
\newtheorem{theorem}{Theorem}[section]
\newtheorem{proposition}[theorem]{Proposition}
\newtheorem{lemma}[theorem]{Lemma}
\theoremstyle{definition}
\newtheorem{assumption}[theorem]{Assumption}
\theoremstyle{remark}
\newtheorem{remark}[theorem]{Remark}
\newcommand{\R}{\mathbb{R}}
\newcommand{\E}{\mathbb{E}}
\icmltitlerunning{NCRS: Noisy Pairwise-Comparison Random Search for Smooth Nonconvex Optimization}
\begin{document}

\twocolumn[
\icmltitle{Noisy Pairwise-Comparison Random Search for Smooth Nonconvex Optimization}

\begin{icmlauthorlist}
\icmlauthor{Taha El Bakkali El Kadi}{um6p}
\icmlauthor{Rayane Bouftini}{um6p}
\icmlauthor{Qiuyi Zhang}{deepmind}

\icmlauthor{Omar Saadi}{um6p}
\end{icmlauthorlist}

\icmlaffiliation{um6p}{College of Computing, UM6P}
\icmlaffiliation{deepmind}{Google DeepMind}

\icmlcorrespondingauthor{Taha El Bakkali El Kadi}{taha.elbakkali@um6p.ma}

\icmlkeywords{Smooth Nonconvex Optimization, Noisy Comparisons, Pairwise Comparison Oracle, Derivative-Free Optimization, Zeroth-Order Optimization, Direct Search}

\vskip 0.3in
]

\printAffiliationsAndNotice{}  % leave blank if no need to mention equal contribution

%%%%%%%%%%%%%%%%%%%%%%%%%%%%%%%%
% ABSTRACT
%%%%%%%%%%%%%%%%%%%%%%%%%%%%%%%%

\begin{abstract}
We study smooth nonconvex optimization using only noisy pairwise comparisons, without access to gradients or function values.
We propose Noisy-Comparison Random Search (NCRS), a simple direct-search method that samples random directions and performs accept/reject updates from comparison feedback.
Under a low-dimensional active-subspace structure, NCRS adapts to the intrinsic dimension $k\le d$ rather than the ambient dimension $d$.
For a uniform-margin comparison oracle with advantage $p$, NCRS achieves $\epsilon$-first-order stationarity with comparison complexity
$\mathcal{O}(k/(p^2\epsilon^2))$.
We also introduce a gap-dependent confidence model, where comparison reliability decreases as the objective-value gap between the two candidates becomes small, and analyze a confidence-weighted voting variant of NCRS.
For this oracle, the method achieves $\epsilon$-first-order stationarity with total comparison complexity
$\mathcal{O}(k^2/\epsilon^4)$.
These results provide intrinsic-dimension convergence guarantees for noisy comparison-based random search in smooth nonconvex optimization.
\end{abstract}

%%%%%%%%%%%%%%%%%%%%%%%%%%%%%%%%
% INTRODUCTION
%%%%%%%%%%%%%%%%%%%%%%%%%%%%%%%%

\section{Introduction}
We study smooth nonconvex optimization of an objective
$f:\mathbb{R}^d\to\mathbb{R}$ under a feedback model that is increasingly common in modern machine learning:
rather than observing gradients or function values, the optimizer only receives noisy comparisons between candidate solutions.
Given two points $x,y\in\mathbb{R}^d$, the oracle returns a possibly noisy ordinal signal indicating which point has the smaller value of $f$.
The optimizer observes neither $f(x)$, $f(y)$, nor gradient information~\citep{zhang2025zerothorder,saha2025dueling,tang2024zerothorder}.
The goal is to find an $\varepsilon$-first-order stationary point, namely a point $x$ satisfying
$\|\nabla f(x)\|_2\leq \varepsilon$, using as few comparison queries as possible.

This setting arises naturally in preference-driven applications.
In reinforcement learning from human feedback, model outputs are compared according to human preferences;
in recommender systems and search-result ranking, user interactions reveal relative preferences among items;
and in human-in-the-loop generation, candidate outputs such as images or responses are often evaluated through pairwise judgments.

In the context of language-model alignment, ComPO~\citep{chen2026compo} recently proposed a zeroth-order preference-alignment method based on comparison oracles, further illustrating the relevance of comparison feedback in modern machine-learning applications.

Prior work has shown that ordinal feedback can be surprisingly informative when the comparison oracle is exact~\citep{Golovin2020Gradientless,tang2024zerothorder}.
In this deterministic setting, ranking-based zeroth-order methods can extract optimization-relevant information from orderings alone and obtain convergence guarantees for smooth nonconvex objectives.
In particular, \citet{tang2024zerothorder} achieve a comparison-query complexity of order $O(d/\varepsilon^2)$ for finding an $\varepsilon$-first-order stationary point.
This is remarkable because the same $O(d/\varepsilon^2)$ scaling is also achieved by classical smooth nonconvex zeroth-order methods under the stronger value-oracle model, where exact function values are observed.

However, exact comparisons are often unrealistic in preference-driven applications.
When two candidates have similar objective values, their ordering may be ambiguous and the observed preference may be unreliable.
Thus, the optimizer must extract descent information from comparisons that are not only ordinal, but also noisy and potentially weak when the objective gap is small.
This motivates the study of noisy comparison oracles for smooth nonconvex optimization.

A central difficulty in comparison-based optimization is its severe information bottleneck relative to first-order methods.
A gradient query directly reveals a full vector in $\mathbb{R}^d$, providing an explicit local descent direction.
By contrast, a comparison query provides only relative information about two queried points.
In the simplest deterministic case, this information reduces to a single bit indicating whether $f(y)<f(x)$.
Thus, even in the absence of noise, the optimizer must infer descent information from highly compressed ordinal observations rather than observing a descent direction directly.
This limitation becomes especially pronounced in high dimension: random search directions probe the gradient only through one-dimensional projections, leading to an unavoidable dependence on the ambient dimension $d$~\citep{jamieson2012query}.

Recent work has begun to address more realistic noisy preference models.
In the smooth convex and strongly convex settings, \citet{saha2025dueling} study a general dueling framework in which the expected comparison outcome is governed by a transfer function $\rho$ of the objective gap.
In this model, the preference signal can become weaker as the two queried candidates become closer in objective value, and the resulting query complexity depends polynomially on the local flatness degree of the transfer function.
This highlights a key difficulty of noisy comparison feedback: when the objective gap between two candidates is small, the observed preference may carry only weak directional information.
While this phenomenon already arises near optimality in convex optimization, it is particularly important in smooth nonconvex optimization, where convergence is certified by a small gradient norm. As $\|\nabla f(x)\|_2$ decreases, nearby perturbations tend to have similar objective values, making noisy comparisons weakly informative precisely when stationarity must be established.

Beyond noise, high dimensionality raises an additional challenge.
Modern machine learning models often have enormous ambient dimension, while the objective may vary meaningfully only along a much lower-dimensional structure.
This motivates the distinction between the ambient dimension $d$ and an intrinsic dimension $k$, corresponding to the dimension of an active subspace along which the objective exhibits most of its variation.
Classical zeroth-order and comparison-based methods typically incur a cost depending on $d$, which can be prohibitive when $d$ is large.
A natural question is therefore whether comparison-based random search can exploit such intrinsic structure while remaining robust to noisy ordinal feedback.

\begin{table*}[t]
    \caption{Comparison of comparison-based optimization algorithms.}
    \label{tab:algo_comparison_comprehensive}
    \centering
    \small 
    % Reduces whitespace between columns to give text more room
    \setlength{\tabcolsep}{2pt} 

    % WIDTH SETTINGS:
    % 1. Algorithm: l (Fits to text)
    % 2. Objective: 1.7\hsize (Takes 85% of flexible space - VERY WIDE)
    % 3. Complexity: 0.3\hsize (Takes 15% of flexible space - VERY NARROW)
    % 4. Metric: c (Fits to text)
    % Sum of X widths must equal 2
    
    \begin{tabularx}{\textwidth}{
        l 
        >{\raggedright\arraybackslash\hsize=1.5\hsize}X 
        >{\raggedright\arraybackslash\hsize=0.5\hsize}X 
        c
    }
        \toprule
        \textbf{Algorithm} & \textbf{Objective / oracle model} & \textbf{Complexity / guarantee} & \textbf{Metric} \\
        \midrule
        \textbf{Jamieson et al.} \citep{jamieson2012query}
        & Smooth Convex, noiseless comparisons
        & $\Omega\!\bigl(d \log(1/\epsilon)\bigr)$
        & $\mathbb{E}[f(x)]-f^\star$ \\
        \addlinespace % Adds a little vertical breathing room between rows

        \textbf{GLD} \citep{Golovin2020Gradientless}
        & Monotone transform of smooth + strongly convex; noiseless comparison, intrinsic dim $k$
        & $\mathcal{O}\!\bigl(k \log(1/\epsilon)\bigr)$
        & $\mathbb{E}[f(x)]-f^\star$ \\
        \addlinespace

        \textbf{Saha et al.} \citep{pmlr-v139-saha21b}
        & Smooth convex; uniform-margin noise, advantage $p$
        & $\tilde{\mathcal O}\!\bigl(d/(p^2\epsilon)\bigr)$
        & $\mathbb{E}[f(x)]-f^\star$ \\
        \addlinespace

        \textbf{Saha et al.} \citep{pmlr-v139-saha21b}
        & Strongly convex; uniform-margin noise, advantage $p$
        & $\tilde{\mathcal O}\!\bigl((d/p^2)\log(1/\epsilon)\bigr)$
        & $\mathbb{E}[f(x)]-f^\star$ \\
        \addlinespace

        \textbf{Saha et al.} \citep{saha2025dueling}
        & Smooth Convex; margin-dep noise model, noise power $q$
        & $\tilde{\mathcal O}\!\bigl(d^{2q+1}/\epsilon^{4q}\bigr)$
        & $\mathbb{E}[f(x)]-f^\star$ \\
        \addlinespace

        \textbf{ZO-RankSGD} \citep{tang2024zerothorder}
        & Smooth nonconvex; noiseless ranking
        & $\mathcal{O}\!\bigl(d/\epsilon^2\bigr)$
        & $\E\|\nabla f(x)\|_2$ \\
        \addlinespace

        \textbf{Wang et al.} \citep{wang2025preference}
        & Smooth nonconvex; sign of two noisy evaluations, variances $\sigma, \sigma_f$
        & $\tilde{\mathcal O}(d/\epsilon^2)$ for $ \sigma+\sigma_f= \tilde{\mathcal O}(\epsilon)$
        & $\E\|\nabla f(x)\|_2$ \\

        \midrule
        \rowcolor{ourrow}
        \textbf{NCRS (Our Work)}
        & Smooth nonconvex, intrinsic dim $k$; uniform-margin noise, advantage $p$
        & $\mathcal{O}\!\bigl(k/(p^2\epsilon^2)\bigr)$
        & $\E\|\nabla f(x)\|_2$ \\
        
        \rowcolor{ourrow}
        \textbf{NCRS+Vote (Our Work)}
        & Smooth nonconvex, intrinsic dim $k$; confidence model
        & $\mathcal{O}\!\bigl(k^2/\epsilon^4\bigr)$
        & $\E\|\nabla f(x)\|_2$ \\
        \bottomrule
    \end{tabularx}
    \vskip -0.1in
\end{table*}

\subsection{Intrinsic Dimension via a Ridge Model}

A common way to model intrinsic dimension is to assume that, although the ambient dimension is large, the objective varies only through a few effective directions. 
This viewpoint is classical in active-subspace methods, which are motivated by the observation that many multivariate engineering functions depend primarily on a small number of important directions in the input space~\citep{constantine2014active}. 
Related evidence appears in language-model fine-tuning, where low-dimensional reparameterizations can often achieve performance comparable to full-space optimization~\citep{aghajanyan2021intrinsic}.

Motivated by these observations, we adopt the ridge model
\[
    f(x)=g(Ax),
\]
where \(A\in\mathbb{R}^{k\times d}\) has full row rank \(k\le d\), and \(g:\mathbb{R}^k\to\mathbb{R}\) is a  low-dimensional function. 
In this model, \(f\) depends on \(x\) only through the \(k\)-dimensional representation \(Ax\), and is invariant along directions in \(\ker(A)\). 
Thus, \(k\) captures the intrinsic dimension of the objective, while \(d\) is the ambient dimension.

This ridge assumption is not meant to claim that practical objectives exactly satisfy a fixed low-dimensional representation. 
Rather, it provides a tractable analytical abstraction of intrinsic dimension.

\subsection{Stochastic Sign-Comparison Oracle}
We assume access to a stochastic pairwise-comparison oracle
$
R:\R^d\times\R^d \to \{-1,+1\},
$
intended to return the sign of the true difference $f(x)-f(y)$ but may be noisy. We measure its reliability through:
$$
\Pr\!\big(R(x,y)=\sign(f(x)-f(y))\big).
$$

\paragraph{Uniform-margin comparisons.}
A classical model is the \emph{uniform-margin} regime, which assumes that the oracle remains uniformly better
than random guessing, independently of the queried pair. While convenient for analysis, this assumption can be
optimistic in applications, since comparisons are typically most ambiguous when $|f(x)-f(y)|$ is small near ties.

\begin{assumption}
\label{assump:uniform-ranking}
There exists $p\in(0,\tfrac12]$ such that for all $x,y\in\R^d$ with $f(x)\neq f(y)$,
\[
\Pr\!\big(R(x,y)=\sign\!\big(f(x)-f(y)\big)\big)\ \ge\ \tfrac12+p,
\]
where the oracle output is $R(x,y)\in\{-1,+1\}$.
When $f(x)=f(y)$, the oracle output may be arbitrary.
\end{assumption}

A convenient scalar summary of the comparison quality is the signed product $R(x, y) \operatorname{sign}(f(x)-f(y))$. In the noiseless case this equals 1 for every queried pair, since $R(x, y)$ always matches the true ordering. If we suppose that the oracle is non-informative and, given $(x, y)$, returns a fair random sign $R(x, y) \in\{-1,+1\}$ independent of the true ordering, then:
$$
\mathbb{E}[R(x, y) \operatorname{sign}(f(x)-f(y)) \mid x, y]=0 .
$$
By denoting $Q_{x,y}:=\mathbb{E}[R(x, y) \operatorname{sign}(f(x)-f(y)) \mid x, y]$, we remark that:
$$
Q_{x,y}=2 \operatorname{Pr}(R(x, y)=\operatorname{sign}(f(x)-f(y)) \mid x, y)-1 .
$$
Assumption~\ref{assump:uniform-ranking} enforces a uniform positive bias toward the correct ordering,
$$
\mathbb{E}[R(x, y) \operatorname{sign}(f(x)-f(y)) \mid x, y] \geq 2 p,
$$
even when $|f(x) - f(y)|$ is arbitrarily small.

\medskip
\noindent\textbf{Gap-dependent confidence comparisons (Our model).}
To reflect the fact that comparisons are hardest near ties, we consider an enriched oracle that outputs a
signed confidence score:
$$
\Tilde{R}:\R^d\times\R^d \to [-1,1].
$$
We interpret $\Tilde{R}(x,y)>0$ as ``$y$ is preferred to $x$'', $\Tilde{R}(x,y)<0$ as ``$x$ is preferred to $y$'',
and $|\Tilde{R}(x,y)|$ as a confidence level. Define $\Delta(x,y):=f(x)-f(y)$.

\begin{assumption}
\label{assump:signal-variance-score}
For any queried pair $x,y\in\R^d$ with $\Delta(x,y)\neq 0$, the oracle returns $\Tilde{R}(x,y)\in[-1,1]$ such that
$$\begin{cases}
\mathbb{E} \big[ \operatorname{sign}(\Delta(x, y)) \Tilde{R}(x, y) \mid x, y \big] \geq \rho \big( |\Delta(x, y)| \big), \\
\mathbb{E} \big[ \Tilde{R}(x, y)^2 \mid x, y \big] \leq C \rho \big( |\Delta(x, y)| \big),
\end{cases}$$
for some constant $C\in[1,\infty)$, where $\rho:\R_+\to[0,1]$ is nondecreasing with $\rho(0)=0$.
Moreover, there exist constants $c>0$ and $r>0$ such that for all 
$t\in[0,r]$, we have $\rho(t)\ \ge\ c\,t.$
\end{assumption}

\begin{remark}[Why Assumption~\ref{assump:signal-variance-score} is more realistic than uniform margin]
\label{rem:moment-vs-uniform-score}
Assumption~\ref{assump:uniform-ranking} is gap-independent: it enforces a uniform positive correlation with the true
ordering even when $|\Delta(x,y)|$ is arbitrarily small.
In contrast, Assumption~\ref{assump:signal-variance-score} makes the guaranteed alignment scale with the gap through
$\rho(|\Delta(x,y)|)$, allowing the signal to vanish near ties and to increase as one point becomes clearly better.

The second-moment bound encodes soft abstention. Since $|\tilde{R}(x,y)|\le 1$, small $\rho(|\Delta|)$ forces the score
to be typically close to $0$. For any $\eta\in(0,1]$, Markov's inequality yields:
\begin{align*}
\operatorname{Pr} \big( |\tilde{R}(x, y)| \geq \eta \mid x, y \big) 
\leq \frac{C}{\eta^2} \rho \big( |\Delta(x, y)| \big).
\end{align*}
Thus, when $|\Delta(x,y)|$ is small, the oracle rarely outputs a large-magnitude high-confidence preference.

\end{remark}

\medskip
\noindent\textbf{Connection to classical link functions.}
A standard model for noisy comparisons assumes a probabilistic link
$\sigma:\R\to(0,1)$ such that
$\Pr\!\bigl(B(x,y)=+1 \mid x,y\bigr)=\sigma\!\bigl(\Delta(x,y)\bigr)$,
where $B(x,y)\in\{\pm 1\}$ and $\Delta(x,y)$ is the signed gap.
This induces the signed score
$\rho(t):=\E\!\left[B(x,y)\mid \Delta(x,y)=t\right]=2\sigma(t)-1\in[-1,1]$.
We work with a score $\tilde R(x,y)\in[-1,1]$ satisfying the mean-consistency condition
$\E[\tilde R(x,y)\mid x,y]=\rho(\Delta(x,y))$.
If $\sigma$ is nondecreasing and satisfies the symmetry $\sigma(-t)=1-\sigma(t)$, then $\rho$ is odd and nondecreasing.
Moreover, if $\sigma$ is differentiable at $0$ with $\sigma'(0)>0$, then $\rho$ is locally linear near the origin,
which implies Assumption~\ref{assump:signal-variance-score} with constants controlled by $\sigma'(0)$.
For example, the Logistic/Bradley--Terry link $\sigma(t)=\bigl(1+\exp(-t/\beta)\bigr)^{-1}$ satisfies
$\rho(t)=2\sigma(t)-1=\tanh\!\bigl(t/(2\beta)\bigr)$ and $\rho'(0)=1/(2\beta)$.
The Probit/Thurstone link $\sigma(t)=\Phi(t/\beta)$ yields
$\rho(t)=2\Phi(t/\beta)-1=\mathrm{erf}\!\bigl(t/(\beta\sqrt{2})\bigr)$ and $\rho'(0)=\sqrt{2/\pi}\,1/\beta$.
Appendix~\ref{app:prob-to-score} provides the full derivations and further link functions.

\color{black}

\subsection{Our Contributions}

We study smooth nonconvex optimization when feedback is restricted to noisy pairwise comparisons and the objective admits a low-dimensional active-subspace structure. We introduce a simple comparison-based random search method and analyze it under two oracle models of increasing realism. At a high level, our contributions are:

\textbf{(1) Intrinsic-dimension stationarity under noisy comparisons.}
We propose \emph{Noisy-Comparison Random Search} (NCRS; Algorithm~\ref{alg:NCRS-sto}), a direct-search method that samples a random direction $s_t$
and performs an improve-or-stay update from a single noisy comparison $R(\theta^t,\theta^t+\alpha_t s_t)$.
Under a uniform-margin ranking oracle (Assumption~\ref{assump:uniform-ranking}), we show that for smooth nonconvex objectives with a $k$-dimensional
active-subspace structure,
$
\frac{1}{T}\sum_{t=1}^T \E\|\nabla f(\theta^t)\|_2 \le \varepsilon
\quad\text{with}\quad
T=\mathcal O\!\bigl(k/(p^2\varepsilon^2)\bigr).
$
The key observation is that for ridge-type objectives $f(x)=g(Ax)$, if 
$P := A^\top(AA^\top)^{-1}A$ denotes the orthogonal projector onto the 
$k$-dimensional active subspace, then NCRS’ accept/reject decision 
depends only on the projected direction $Ps_t$, so the ambient 
dimension $d$ is replaced by the intrinsic dimension $k$.

\textbf{(2) Gap-dependent confidence comparisons.}
Uniform-margin models enforce a gap-independent advantage, which can be overly optimistic near ties.
We introduce a gap-dependent \emph{confidence} oracle (Assumption~\ref{assump:signal-variance-score}) that returns a signed score
$\tilde R(x,y)\in[-1,1]$ whose alignment and variance are controlled by a function $\rho(|f(x)-f(y)|)$ of the function gap.
We analyze a confidence-weighted vote variant of NCRS (Algorithm~\ref{alg:NCRS-tie}) and show that in the same smooth nonconvex active-subspace setting it achieves
$
\frac{1}{T}\sum_{t=1}^T \E\|\nabla f(\theta^t)\|_2 \le \varepsilon$
with total comparison complexity $
NT=\mathcal O\!\bigl(k^2/\varepsilon^4\bigr).$
The additional $k/\varepsilon^2$ factor captures the near-stationary regime where comparisons become less informative as $|f(x)-f(y)|$ shrinks.

\textbf{(3) Empirical validation on intrinsic-dimension and preference-based tasks.}
We empirically validate NCRS on masked language model fine-tuning and preference-based RL benchmarks, probing intrinsic-dimension effects and comparing against standard zeroth-order and policy-optimization baselines.

\section{Uniform-Margin Ranking Oracle: NCRS Analysis}
\subsection{Convergence Analysis of NCRS for Ridge Objectives $f(x)=g(Ax)$}
\textbf{Low-dimensional structure with monotone observation.}
We consider objectives that depend on the decision variable $x\in\R^d$ only through
a $k$-dimensional linear embedding, with $k\le d$. A convenient way to formalize this is via a composite representation
$
  f(x)=g(Ax),$
where $A\in\R^{k\times d}$  has rank $k \le d$ and $g  : \mathbb{R}^k \to \mathbb{R}$ is a low-dimensional function.
 We further assume that $f$ is $L_f$-smooth.

\textbf{Stochastic ranking oracle.}
We assume access to a noisy pairwise-comparison oracle for $f$ satisfying the uniform-margin condition of Assumption \ref{assump:uniform-ranking}.

%------------------------------------------------------------
\textbf{Noisy-Comparison Random Search.}
We introduce \emph{Noisy-Comparison Random Search} (NCRS), a simple comparison-based direct-search method that uses one oracle query per iteration.
At iterate $\theta^t$, NCRS samples a Gaussian direction $s_t\sim\mathcal N(0,I_d)$ and forms the candidate point
$\theta^t+\alpha_t s_t$. It then queries the ranking oracle and applies an \emph{improve-or-stay} rule:
the candidate is accepted if it is ranked no worse than the current iterate; otherwise the iterate is left unchanged.

\begin{algorithm}[H]
  \caption{Noisy-Comparison Random Search (NCRS)}
  \label{alg:NCRS-sto}
  \begin{algorithmic}[1]
    \STATE \textbf{Input:} $\theta^1\in\R^d$, step sizes $(\alpha_t)_{t\ge 1}$
    \FOR{$t=1,2,\dots$}
      \STATE Sample $s_t \sim \mathcal N(0,I_d)$
      \STATE Query $R\!\left(\theta^t,\theta^t+\alpha_t s_t\right)\in\{+1,-1\}$
      \IF{$R(\theta^t,\theta^t+\alpha_t s_t)=+1$}
        \STATE \hspace{0.6cm} $\theta^{t+1}= \theta^t+\alpha_t s_t$
      \ELSE
        \STATE \hspace{0.6cm} $\theta^{t+1}= \theta^t$
      \ENDIF
    \ENDFOR
  \end{algorithmic}
\end{algorithm}

\textbf{NCRS automatically adapts to the intrinsic subspace.}
In the ridge model $f(x)=g(Ax)$, the objective is invariant to motions in $\ker(A)$:
$f(x+v)=f(x)$ for all $v\in\ker(A)$. Thus, any component of a perturbation that lies in $\ker(A)$ is \emph{invisible} to the objective and cannot affect the
accept/reject decision of NCRS.

Formally, since $\ker(A)=\range(A^\top)^\perp$, we have the orthogonal decomposition
$\R^d=\range(A^\top)\oplus\ker(A)$. For any $x\in\R^d$, let $p=Px$ be the orthogonal
projection of $x$ onto $\range(A^\top)$. Since $p\in\range(A^\top)$ and
$x-p\in\ker(A)$, it holds that:
$$
\begin{cases}
 \exists u \in \mathbb{R}^k,\,\, Px=A^\top u,\\
 Ax=A Px=A A^\top u.
\end{cases}$$
Since we assume $\rank(A)=k$, the matrix $AA^\top\in\R^{k\times k}$ is invertible, and hence $Px=A^\top(AA^\top)^{-1}Ax$ and $
P:=A^\top(AA^\top)^{-1}A
$ is the orthogonal projector onto $\mathrm{range}(A^\top)$. For any $x\in\R^d$, $\alpha>0$,
and direction $s\in\R^d$,
\[
f(x+\alpha s)=g(Ax+\alpha APs)=f(x+\alpha Ps).
\]
In particular, the true comparison between $x$ and $x+\alpha s$ depends only on $Ps$:
\[
\mathrm{sign}\big(f(x+\alpha s)-f(x)\big)
=
\mathrm{sign}\big(f(x+\alpha Ps)-f(x)\big).
\]
Therefore, although NCRS samples $s_t\sim\mathcal N(0,I_d)$ in $\R^d$, its oracle query and accept/reject decision depend only on the
projected direction $Ps_t\in\range(A^\top)$. Equivalently,  NCRS behaves as a random-direction comparison search on the k-dimensional active subspace
$\range(A^\top)$: components in $\ker(A)$ are invisible to the objective and play no role in the update. This is exactly what yields the
$k$-dependence in our convergence bounds, despite NCRS never observing $A$ or the intrinsic dimension $k$.

The next lemma establishes a one-step expected descent inequality for NCRS in the ridge model, with dependence on the intrinsic dimension $k$.

\begin{propbox}
\begin{lemma}
\label{lem:NCRS-composite-one-step-k}
Assume that $f(x)=g(Ax)$ for some matrix $A\in\R^{k\times d}$ with full row rank $\rank(A)=k\le d$
and some function $g:\R^k\to\R$, and that $f$ is $L_f$-smooth.
Let $(\theta^t)$ be the iterates produced by \Cref{alg:NCRS-sto}, and suppose the ranking oracle satisfies Assumption \ref{assump:uniform-ranking}
with parameter $p$. Then for all $t\ge 1$,
\small{
\[
p \alpha_t\sqrt{\frac{2}{\pi}} \mathbb{E}\|\nabla f(\theta^t)\|_2\le \mathbb{E}[ f(\theta^t)-f(\theta^{t+1}) ]+\frac{L_f k \alpha_t^2}{2}.
\]}

\end{lemma}
\end{propbox}
\medskip
For fixed $T \ge 1,$ by averaging the inequality of Lemma \ref{lem:NCRS-composite-one-step-k} over the iterations $1$ to $T$, while using constant step-size $\frac{\alpha_0}{\sqrt{k T}}$ with $\alpha_0>0,$ we obtain the following result.

\begin{propbox}
\begin{theorem}\label{thm:NCRS-ridge-conststep}
Assume that  $f(x)=g(Ax)$ for some matrix
$A\in\R^{k\times d}$ with full row rank $\rank(A)=k\le d$ and some function $g:\R^k\to\R$,
and that $f$ is $L_f$-smooth and bounded below, i.e.,
$f^\star:=\inf_{x\in\R^d} f(x)>-\infty$.
Let $(\theta^t)$ be generated by \Cref{alg:NCRS-sto}, and suppose the ranking oracle satisfies
Assumption~\ref{assump:uniform-ranking} with parameter $p\in(0,\tfrac12]$.
Define $\Delta f:=f(\theta^1)-f^\star$ and let $T\ge 1$.
If NCRS is run with constant stepsizes
$
\alpha_t=\frac{\alpha_0}{\sqrt{k\,T}}$
for some $\alpha_0>0$, then we have:
\small{
\[
\frac{1}{T}\sum_{t=1}^T \mathbb{E}\!\left[\|\nabla f(\theta^t)\|_2\right]
\ \le\
\frac{1}{p}\sqrt{\frac{\pi}{2}}
\left(
\frac{\Delta f}{\alpha_0}
+\frac{L_f\,\alpha_0}{2}
\right)\sqrt{\frac{k}{T}}.
\]}
\end{theorem}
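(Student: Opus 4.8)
The plan is a routine telescoping-and-averaging argument built on top of the one-step expected-descent inequality of \Cref{lem:NCRS-composite-one-step-k}; all of the genuinely nontrivial content (the Gaussian direction analysis, the reduction to the $k$-dimensional active subspace, and the use of \Cref{assump:uniform-ranking}) is already packaged into that lemma, so only elementary manipulations remain. I anticipate no real obstacle here — the only points requiring a bit of care are using the lower bound $f^\star>-\infty$ exactly once to bound the telescoped sum, and keeping constants straight through the step-size substitution.

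First I would fix $T\ge 1$ and instantiate \Cref{lem:NCRS-composite-one-step-k} with the constant choice $\alpha_t\equiv\alpha:=\alpha_0/\sqrt{kT}$, obtaining for every $t\in\{1,\dots,T\}$
\[
p\,\alpha\sqrt{\tfrac{2}{\pi}}\;\mathbb{E}\|\nabla f(\theta^t)\|_2\;\le\;\mathbb{E}\big[f(\theta^t)-f(\theta^{t+1})\big]+\tfrac{L_f k\alpha^2}{2}.
\]
Summing this over $t=1,\dots,T$, the right-hand side telescopes to
$\sum_{t=1}^T\mathbb{E}[f(\theta^t)-f(\theta^{t+1})]=\mathbb{E}[f(\theta^1)-f(\theta^{T+1})]\le f(\theta^1)-f^\star=\Delta f$,
where the inequality is exactly where boundedness below is used. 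Hence
\[
p\,\alpha\sqrt{\tfrac{2}{\pi}}\sum_{t=1}^T\mathbb{E}\|\nabla f(\theta^t)\|_2\;\le\;\Delta f+\tfrac{T L_f k\alpha^2}{2}.
\]

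Dividing through by $p\,\alpha\,T\sqrt{2/\pi}$ and using $1/\sqrt{2/\pi}=\sqrt{\pi/2}$ gives
\[
\frac{1}{T}\sum_{t=1}^T\mathbb{E}\|\nabla f(\theta^t)\|_2\;\le\;\frac{1}{p}\sqrt{\frac{\pi}{2}}\left(\frac{\Delta f}{\alpha T}+\frac{L_f k\alpha}{2}\right).
\]
Finally I would substitute $\alpha=\alpha_0/\sqrt{kT}$: the first bracketed term becomes $\frac{\Delta f}{\alpha_0}\sqrt{k/T}$ and the second becomes $\frac{L_f\alpha_0}{2}\sqrt{k/T}$, and combining them yields the stated bound. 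As a closing remark one can optimize the prefactor over $\alpha_0>0$ (the choice $\alpha_0=\sqrt{2\Delta f/L_f}$ is the minimizer, giving prefactor $\sqrt{2\Delta f L_f}$), and then reading off the number of iterations needed to drive the average below $\varepsilon$ recovers $T=\mathcal{O}\!\big(k/(p^2\varepsilon^2)\big)$, matching \Cref{tab:algo_comparison_comprehensive}; this last step is not needed for the theorem as stated.
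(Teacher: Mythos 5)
Your proposal is correct and is exactly the argument the paper intends: the paper derives the theorem by "averaging the inequality of Lemma~\ref{lem:NCRS-composite-one-step-k} over the iterations $1$ to $T$" with the constant step size $\alpha_0/\sqrt{kT}$, which is precisely your telescoping, dividing, and substituting. The arithmetic checks out, including both terms reducing to the common factor $\sqrt{k/T}$.
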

\end{propbox}
\medskip
\begin{remark}
\Cref{thm:NCRS-ridge-conststep} implies that:
$$
\frac{1}{T}\sum_{t=1}^T \mathbb{E}\!\left[\|\nabla f(\theta^t)\|_2\right]
\;=\;
\mathcal{O}\!\left(\frac{1}{p}\sqrt{\frac{k}{T}}\right).
$$
Thus, by choosing
$
T=\mathcal{O}\!\left(\frac{k}{p^2\,\varepsilon^2}\right)
$, we can
ensure that
$
\frac{1}{T}\sum_{t=1}^T \mathbb{E}\!\left[\|\nabla f(\theta^t)\|_2\right]\le \varepsilon.$
In particular, in the deterministic setting $p=\tfrac12$ this recovers an $\mathcal{O}(k/\varepsilon^2)$ evaluation complexity.
This bound improves over the classical smooth nonconvex zeroth-order rate $\mathcal{O}(d/\varepsilon^2)$
by replacing the ambient dimension $d$ with the intrinsic dimension $k$.
\end{remark}

\section{Gap-Dependent Confidence Oracle: NCRS Analysis with Confidence-Weighted Vote}
Throughout this section we assume an exact ridge structure:
there exist $k\le d$, a full row-rank matrix $A\in\R^{k\times d}$ with $\rank(A)=k$,
and a function $g:\R^k\to\R$ such that $f(x)=g(Ax)$ for all $x\in\R^d$.
We further assume that $f$ is $L_f$-smooth on $\R^d$ and bounded below by $f^\star$.
We now state the \emph{gap-dependent confidence} comparison model.

\textbf{Gap-dependent confidence oracle.}
For any pair $(\theta^1,\theta^2)\in\R^d\times\R^d$, the oracle returns a signed confidence score
$\Tilde{R}(\theta^1,\theta^2)\in[-1,1]$ interpreted as:
$$
\begin{cases} 
|\tilde{R}(\theta^1, \theta^2)| & \text{is the confidence level of the answer,} \\
\tilde{R}(\theta^1, \theta^2) > 0 & \text{means the oracle prefers } \theta^2 \text{ over } \theta^1, \\
\tilde{R}(\theta^1, \theta^2) < 0 & \text{means the oracle prefers } \theta^1 \text{ over } \theta^2.
\end{cases}
$$
The oracle randomness is understood to be conditional on the queried pair $(\theta^1,\theta^2)$,
and it satisfies Assumption~\ref{assump:signal-variance-score}.

\textbf{Algorithm.}
At iteration $t$, sample $s_t\sim\mathcal N(0,I_d)$, form the candidate $\theta^t+\alpha_t s_t$,
collect $N$ i.i.d.\ oracle outcomes on the pair, aggregate by the sign of their sum.
This is a confidence-weighted vote. Accept if the aggregate prefers the candidate.

\begin{algorithm}[H]
\caption{NCRS with confidence-weighted vote}
\label{alg:NCRS-tie}
\begin{algorithmic}[1]
  \STATE \textbf{Input:} initial point $\theta^1\in\mathbb{R}^d$, stepsizes $(\alpha_t)$, comparisons $N$
  \FOR{$t=1,2,\dots$}
    \STATE Sample $s_t \sim \mathcal N(0,I_d)$
    \STATE Query oracle $N$ times on $(\theta^t, \theta^t + \alpha_t s_t)$, get $\Tilde{R}_{t,1},\dots,\Tilde{R}_{t,N}\in[-1,1]$
    \STATE $S_t = \sum_{n=1}^N \Tilde{R}_{t,n}$
    \STATE $\theta^{t+1} =
      \begin{cases}
         \theta^t + \alpha_t s_t, & S_t > 0,\\
         \theta^t, & \text{otherwise.}
      \end{cases}$
  \ENDFOR
\end{algorithmic}
\end{algorithm}

\textbf{Notation and decision events.}
Let
\[
\begin{cases}
 \Delta_t := f(\theta^t+\alpha_t s_t)-f(\theta^t),\\
X_t := \{S_t>0\},\\
Y_t := \{\Delta_t<0\}.
\end{cases}
\]
Since the update is accept--or--stay, we have
\[
  f(\theta^{t+1}) \;=\; f(\theta^t) + \Delta_t\,\mathbf{1}_{X_t}.
\]
To derive a descent inequality we control $\E[\Delta_t\,\mathbf{1}_{X_t}\mid \theta^t]$ via
\[
  \Delta_t\,\mathbf{1}_{X_t}
  \;=\;
  \underbrace{\Delta_t\,\mathbf{1}_{Y_t}}_{\text{true-improvement term}}
  \;+\;
  \underbrace{\Delta_t\big(\mathbf{1}_{X_t}-\mathbf{1}_{Y_t}\big)}_{\text{ranking-error term}}.
\]

The first term coincides with the exact comparator case and can be upper bounded using $L_f$-smoothness and Gaussian symmetry.
\begin{propbox}
\begin{lemma}\label{lem:error1bound}
Assume that $f(x)=g(Ax)$ for some matrix $A\in\R^{k\times d}$ with full row rank $\rank(A)=k\le d$
and some function $g:\R^k\to\R$, and that $f$ is $L_f$-smooth. Under Algorithm~\ref{alg:NCRS-tie}, for all $t\ge 1$, we have:
\[
  \E\!\left[\Delta_t\,\mathbf{1}_{Y_t}\,\middle|\,\theta^t\right]
  \;\le\;
  -\,\frac{\alpha_t}{\sqrt{2\pi}}\;\|\nabla f(\theta^t)\|_2
  \;+\;\frac{L_f}{2}\,k\,\alpha_t^2.
\]
\end{lemma}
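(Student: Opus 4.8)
The plan is to bound the truncated increment $\Delta_t\mathbf{1}_{Y_t}=\min(\Delta_t,0)$ pointwise in $s_t$ by a term that is linear in $\langle\nabla f(\theta^t),s_t\rangle$ plus a nonnegative quadratic remainder, and then integrate over $s_t\sim\mathcal N(0,I_d)$ conditionally on $\theta^t$. The ridge structure $f(x)=g(Ax)$ is what converts the ambient dimension into the intrinsic dimension $k$: by the identity $f(x+\alpha s)=f(x+\alpha Ps)$ established earlier in the section, we have $\Delta_t=f(\theta^t+\alpha_t Ps_t)-f(\theta^t)$. Applying $L_f$-smoothness along the segment from $\theta^t$ to $\theta^t+\alpha_t Ps_t$ gives
\[
\Delta_t\;\le\;\alpha_t\langle\nabla f(\theta^t),Ps_t\rangle+\tfrac{L_f}{2}\alpha_t^2\|Ps_t\|_2^2 ,
\]
and since $\nabla f(\theta^t)\in\range(A^\top)$ we have $P\nabla f(\theta^t)=\nabla f(\theta^t)$, so $\langle\nabla f(\theta^t),Ps_t\rangle=\langle\nabla f(\theta^t),s_t\rangle$ while the remainder retains the projected norm $\|Ps_t\|_2^2$.

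Next I would apply two elementary facts about $\min(\cdot,0)$: it is monotone, so $a\le b$ implies $\min(a,0)\le\min(b,0)$; and $\min(a+c,0)\le\min(a,0)+c$ whenever $c\ge0$. Taking $a=\alpha_t\langle\nabla f(\theta^t),s_t\rangle$ and $c=\tfrac{L_f}{2}\alpha_t^2\|Ps_t\|_2^2\ge0$, the smoothness bound yields
\[
\Delta_t\mathbf{1}_{Y_t}=\min(\Delta_t,0)\;\le\;\min\!\big(\alpha_t\langle\nabla f(\theta^t),s_t\rangle,\,0\big)+\tfrac{L_f}{2}\alpha_t^2\|Ps_t\|_2^2 .
\]
Then I take the conditional expectation over $s_t$. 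Conditionally on $\theta^t$, the scalar $\langle\nabla f(\theta^t),s_t\rangle$ is $\mathcal N(0,\|\nabla f(\theta^t)\|_2^2)$; for $Z\sim\mathcal N(0,\sigma^2)$ one has $\E[\min(Z,0)]=-\E[Z^+]=-\sigma/\sqrt{2\pi}$ (half the mean absolute value, by symmetry). Also $\E\|Ps_t\|_2^2=\E[s_t^\top Ps_t]=\tr(P)=k$, since $P$ is the orthogonal projector onto a $k$-dimensional subspace. Substituting $\sigma=\|\nabla f(\theta^t)\|_2$ gives precisely the claimed inequality.

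The only step that needs care is the truncation: one must keep track of signs so that the smoothness error term survives as a genuinely additive nonnegative constant after $\min(\cdot,0)$ is applied, i.e. verify $\min(a+c,0)\le\min(a,0)+c$ for $c\ge0$ (check the cases $a+c\le0$ and $a+c>0$ separately). Everything else—the $\mathcal N(0,\sigma^2)$ negative-part computation and the identity $\tr(P)=k$—is routine, and the result matches the exact-comparator bound appearing in the single-query analysis, as expected.
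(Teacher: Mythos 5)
Your proposal is correct and follows essentially the same route as the paper's proof: bound $\Delta_t$ via $L_f$-smoothness along the projected direction $Ps_t$, use $P\nabla f(\theta^t)=\nabla f(\theta^t)$ to replace $\langle\nabla f(\theta^t),Ps_t\rangle$ by $\langle\nabla f(\theta^t),s_t\rangle$, apply the truncation inequality $\min(a+c,0)\le\min(a,0)+c$ for $c\ge 0$, and then compute $\E[\min(\langle\nabla f(\theta^t),s_t\rangle,0)\mid\theta^t]=-\|\nabla f(\theta^t)\|_2/\sqrt{2\pi}$ and $\E\|Ps_t\|_2^2=\tr(P)=k$. The step you flag as needing care is exactly the one the paper also isolates, so nothing further is required.
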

\end{propbox}

The second term captures ranking mistakes. Under Assumption~\ref{assump:signal-variance-score},
consider the queried pair $(x,y)=(\theta^t,\theta^t+\alpha_t s_t)$ and note that
$\Delta(x,y)=f(x)-f(y)= -\Delta_t$. A single query then has \emph{aligned} mean at least $\rho(|\Delta_t|)$
and conditional second moment at most $C\,\rho(|\Delta_t|)$.
Repeating the same pair $N$ times and taking the sign of the sum yields an exponentially small
error probability with exponent proportional to $N\rho(|\Delta_t|)$.

\begin{propbox}
\begin{lemma}\label{lem:ranking_error}
Assume that Assumption~\ref{assump:signal-variance-score} holds and condition on $(\theta^t,s_t)$ with $\Delta_t\neq 0$.
Under Algorithm~\ref{alg:NCRS-tie}, we have:
$$
\mathbb{E}\!\left[
  \big|\,\mathbf{1}_{X_t} - \mathbf{1}_{Y_t}\,\big|
  \,\middle|\, \theta^t, s_t
\right]
\;\le\;
 \exp\!\left(-\, \frac{N \rho(|\Delta_t|)}{2C+\frac{4}{3}}\right).
$$
\end{lemma}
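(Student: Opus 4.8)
The plan is to reduce the bound to a one-sided Bernstein tail inequality for a sum of $N$ i.i.d.\ bounded scores whose mean and second moment are both governed by $\rho(|\Delta_t|)$; only the two moment inequalities of Assumption~\ref{assump:signal-variance-score} are used here, not the local lower bound $\rho(t)\ge ct$.

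\textbf{Step 1: conditioning and sign bookkeeping.} Conditioning on $(\theta^t,s_t)$ fixes the queried pair $(x,y)=(\theta^t,\theta^t+\alpha_t s_t)$ (since $\alpha_t$ is deterministic), so $\Delta(x,y)=f(x)-f(y)=-\Delta_t$ and the $N$ oracle outputs $\tilde R_{t,1},\dots,\tilde R_{t,N}$ are i.i.d.\ draws of the oracle on that pair. By Assumption~\ref{assump:signal-variance-score}, $\E[\operatorname{sign}(\Delta(x,y))\,\tilde R_{t,n}\mid\theta^t,s_t]\ge \rho(|\Delta_t|)$ and $\E[\tilde R_{t,n}^2\mid\theta^t,s_t]\le C\,\rho(|\Delta_t|)$. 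Set $Z_n:=\operatorname{sign}(\Delta(x,y))\,\tilde R_{t,n}=-\operatorname{sign}(\Delta_t)\,\tilde R_{t,n}\in[-1,1]$, and write $\rho:=\rho(|\Delta_t|)>0$, $\mu:=\E[Z_n]\ge\rho$, $\E[Z_n^2]=\E[\tilde R_{t,n}^2]\le C\rho$. A short case split on $\operatorname{sign}(\Delta_t)$ gives $|\mathbf{1}_{X_t}-\mathbf{1}_{Y_t}|\le\mathbf{1}\{\sum_{n=1}^N Z_n\le 0\}$: if $\Delta_t<0$ then $Y_t$ holds, $\sum_n Z_n=S_t$, and the disagreement event is $\{S_t\le 0\}$; if $\Delta_t>0$ then $Y_t$ fails, $\sum_n Z_n=-S_t$, and the disagreement event is $\{S_t>0\}=\{\sum_n Z_n<0\}$. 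Taking conditional expectations, $\E[\,|\mathbf{1}_{X_t}-\mathbf{1}_{Y_t}|\mid\theta^t,s_t\,]\le\Pr(\sum_{n=1}^N Z_n\le 0\mid\theta^t,s_t)$.

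\textbf{Step 2: Bernstein.} Put $W_n:=\mu-Z_n$, which are i.i.d., mean zero, satisfy $W_n\le\mu+1\le 2$ a.s., and $\sum_{n=1}^N\E[W_n^2]=\sum_{n=1}^N\Var(Z_n)\le NC\rho$. Since $\{\sum_n Z_n\le 0\}=\{\sum_n W_n\ge N\mu\}$, the one-sided Bernstein inequality yields
\[
\Pr\Big(\sum_{n=1}^N W_n\ge N\mu\Big)\ \le\ \exp\!\left(-\frac{(N\mu)^2}{2\big(NC\rho+\tfrac{2}{3}N\mu\big)}\right)\ =\ \exp\!\left(-\frac{N\mu^2}{2C\rho+\tfrac{4}{3}\mu}\right).
\]

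\textbf{Step 3: discard the $\mu\ge\rho$ slack.} The map $\mu\mapsto\mu^2/(2C\rho+\tfrac{4}{3}\mu)$ has positive derivative for $\mu>0$ (the numerator of the derivative is $4C\rho\mu+\tfrac{4}{3}\mu^2>0$), hence is nondecreasing, so evaluating at $\mu=\rho$ gives $\mu^2/(2C\rho+\tfrac{4}{3}\mu)\ge\rho/(2C+\tfrac{4}{3})$. Chaining Steps 1--3 gives $\E[\,|\mathbf{1}_{X_t}-\mathbf{1}_{Y_t}|\mid\theta^t,s_t\,]\le\exp(-N\rho(|\Delta_t|)/(2C+\tfrac{4}{3}))$, as claimed.

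The step I expect to be most delicate is matching the constants in Steps 2--3: one must feed the second-moment bound $C\rho$ (not a variance) directly into Bernstein's variance slot and use the crude a.s.\ bound $|W_n|\le 2$, and the monotonicity argument of Step 3 is precisely what makes the exponent collapse to the stated $N\rho/(2C+4/3)$. The sign/case bookkeeping in Step 1 is elementary but has to be done so that the disagreement event is bounded uniformly over the sign of $\Delta_t$.
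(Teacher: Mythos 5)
Your proof is correct and follows essentially the same route as the paper: reduce the disagreement event to the tail event $\{\sum_n Z_n\le 0\}$ for the aligned scores $Z_n=\operatorname{sign}(\Delta(x,y))\tilde R_{t,n}$, apply Bernstein with the second moment playing the role of the variance and the crude bound $2$ on the centered variables, then use monotonicity of $\mu\mapsto \mu^2/(2C\rho+\tfrac{4}{3}\mu)$ to replace $\mu$ by $\rho(|\Delta_t|)$. The only cosmetic difference is that you center via $W_n=\mu-Z_n$ and invoke the one-sided upper-bound form of Bernstein, whereas the paper applies its two-sided bounded-variable version to $Z_{t,n}-\mu$; the resulting exponent is identical.
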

\end{propbox}

\smallskip
Lemma~\ref{lem:ranking_error} controls the probability of a wrong aggregate decision on a fixed pair.
To translate this into a bound on the \emph{expected} ranking-error term
$\E[\Delta_t(\mathbf 1_{X_t}-\mathbf 1_{Y_t})\mid \theta^t]$, we decompose into the ``large-gap'' regime
$|\Delta_t|>r$ and the ``small-gap'' regime $|\Delta_t|\le r$, where $\rho(t)\ge c t$ on $[0,r]$.
This yields an exponentially small contribution for $|\Delta_t|>r$ and a $1/N$ contribution for $|\Delta_t|\le r$. Let $
\gamma_{N,r}\ :=\exp\left(-\, \frac{N \rho(r)}{2C+\frac{4}{3}}\right).
$

\begin{propbox}
\begin{lemma}\label{lem:rank-penalty-tie}
Assume that $f(x)=g(Ax)$ for some matrix $A\in\R^{k\times d}$ with full row rank $\rank(A)=k\le d$
and some function $g:\R^k\to\R$, and that $f$ is $L_f$-smooth.
Define
$
\mathcal E_t \ :=\ \Big|\E\!\big[\Delta_t(\mathbf 1_{X_t}-\mathbf 1_{Y_t})\mid \theta^t\big]\Big|.
$
Assume that Assumption~\ref{assump:signal-variance-score} holds. Under Algorithm~\ref{alg:NCRS-tie}, for all $t\ge1$, we have:
{\small
\begin{align*}
\mathcal E_t
&\ \le\
\gamma_{N,r}\Bigg(
\alpha_t\sqrt{\frac{2}{\pi}}\,\|\nabla f(\theta^t)\|_2
+\frac{L_f}{2}\,k\,\alpha_t^2
\Bigg)
+\frac{2 C+\frac{4}{3}}{e\, c N}.
\end{align*}}
\end{lemma}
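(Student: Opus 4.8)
The plan is to push the absolute value inside, condition on the Gaussian direction so that \Cref{lem:ranking_error} applies to the oracle randomness, and then split the remaining estimate into a large-gap and a small-gap regime.

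First I would bound, by $|\E Z|\le\E|Z|$,
\[
\mathcal E_t \;\le\; \E\!\big[\,|\Delta_t|\,\big|\mathbf 1_{X_t}-\mathbf 1_{Y_t}\big|\;\big|\;\theta^t\big],
\]
noting the integrand vanishes on $\{\Delta_t=0\}$, so we may assume $\Delta_t\neq0$ and invoke \Cref{lem:ranking_error}. Conditioning further on $(\theta^t,s_t)$ and using the tower property gives
\[
\mathcal E_t \;\le\; \E\!\Big[\,|\Delta_t|\,\exp\!\big(-\tfrac{N\rho(|\Delta_t|)}{2C+\frac43}\big)\;\Big|\;\theta^t\Big].
\]
Now split on $\{|\Delta_t|>r\}$ versus $\{|\Delta_t|\le r\}$. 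On the former, monotonicity of $\rho$ yields $\exp(-N\rho(|\Delta_t|)/(2C+\tfrac43))\le\gamma_{N,r}$, contributing at most $\gamma_{N,r}\,|\Delta_t|$. On the latter, the hypothesis $\rho(t)\ge ct$ on $[0,r]$ gives $|\Delta_t|\exp(-N\rho(|\Delta_t|)/(2C+\tfrac43))\le |\Delta_t|e^{-\beta|\Delta_t|}$ with $\beta:=Nc/(2C+\tfrac43)$, and maximizing $u\mapsto ue^{-\beta u}$ over $u\ge0$ (maximum at $u=1/\beta$) bounds this by $1/(e\beta)=(2C+\tfrac43)/(ecN)$. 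Upper bounding each indicator-weighted term by the corresponding unconditional quantity and taking expectations,
\[
\mathcal E_t \;\le\; \gamma_{N,r}\,\E\big[|\Delta_t|\;\big|\;\theta^t\big] \;+\; \frac{2C+\frac43}{ecN}.
\]

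It remains to bound $\E[|\Delta_t|\mid\theta^t]$, which is where the intrinsic dimension enters exactly as in \Cref{lem:error1bound}. Since $f=g\circ A$ we have $f(\theta^t+\alpha_t s_t)=f(\theta^t+\alpha_t Ps_t)$ and $\nabla f(\theta^t)\in\range(A^\top)$, hence $P\nabla f(\theta^t)=\nabla f(\theta^t)$ and $\langle\nabla f(\theta^t),Ps_t\rangle=\langle\nabla f(\theta^t),s_t\rangle$. Applying $L_f$-smoothness to the perturbation $\alpha_t Ps_t$,
\[
|\Delta_t| \;\le\; \alpha_t\,\big|\langle\nabla f(\theta^t),s_t\rangle\big| \;+\; \tfrac{L_f}{2}\,\alpha_t^2\,\|Ps_t\|_2^2 .
\]
Taking expectations over $s_t\sim\mathcal N(0,I_d)$: the inner product is $\mathcal N(0,\|\nabla f(\theta^t)\|_2^2)$, so its mean absolute value is $\sqrt{2/\pi}\,\|\nabla f(\theta^t)\|_2$, while $\E\|Ps_t\|_2^2=\tr(P)=k$. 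This yields $\E[|\Delta_t|\mid\theta^t]\le\alpha_t\sqrt{2/\pi}\,\|\nabla f(\theta^t)\|_2+\tfrac{L_f}{2}k\alpha_t^2$, and substituting into the previous display gives the claimed inequality.

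The only mildly delicate points are the scalar optimization $\max_{u\ge0}ue^{-\beta u}=1/(e\beta)$ and keeping the exponent constant $2C+\tfrac43$ consistent between \Cref{lem:ranking_error} and the small-gap estimate; the projection and Gaussian computations are identical to those already used for \Cref{lem:error1bound}, so I do not expect a substantive obstacle.
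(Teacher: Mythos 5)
Your proposal is correct and follows essentially the same route as the paper's proof: bound $\mathcal E_t$ by $\E[|\Delta_t|\exp(-N\rho(|\Delta_t|)/(2C+\tfrac43))\mid\theta^t]$ via Lemma~\ref{lem:ranking_error}, split at $|\Delta_t|=r$, use monotonicity of $\rho$ on the large-gap event and the bound $\sup_{u\ge0}ue^{-\beta u}=1/(e\beta)$ on the small-gap event, and control $\E[|\Delta_t|\mid\theta^t]$ by the same ridge-plus-smoothness computation as in Lemma~\ref{lem:error1bound}. No gaps.
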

\end{propbox}

\noindent\textbf{One-step descent.}
Combining the decomposition
$
\Delta_t\mathbf 1_{X_t}
=
\Delta_t\mathbf 1_{Y_t}
+
\Delta_t(\mathbf 1_{X_t}-\mathbf 1_{Y_t})
$
with Lemma~\ref{lem:error1bound} and Lemma~\ref{lem:rank-penalty-tie} yields a descent inequality with an explicit
penalty induced by the gap-dependent confidence oracle.

\begin{propbox}
\begin{proposition}\label{lem:descent-tie}
Assume that $f(x)=g(Ax)$ for some matrix $A\in\R^{k\times d}$ with full row rank $\rank(A)=k\le d$
and some function $g:\R^k\to\R$, and that $f$ is $L_f$-smooth.
Assume that Assumption~\ref{assump:signal-variance-score} holds. Under Algorithm~\ref{alg:NCRS-tie}, for all $t\ge 1$, we have:
\begin{align*}
\frac{\alpha_t}{\sqrt{2\pi}}\,(1-\gamma_{N,r})\,&\E\,\!\|\nabla f(\theta^t)\|_2\ \le\ \E\!\left[f(\theta^t)-f(\theta^{t+1})\right]
\\
&\ +\frac{L_f}{2}\,k\,(1+\gamma_{N,r})\,\alpha_t^2
+\frac{2 C+\frac{4}{3}}{e\, c N}.
\end{align*}

\end{proposition}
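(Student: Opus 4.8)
The idea is to produce a one‑step conditional descent inequality and then average out the conditioning. Conditioning on $\theta^t$, the accept--or--stay update gives the exact identity $f(\theta^{t+1})=f(\theta^t)+\Delta_t\,\mathbf{1}_{X_t}$, so using the true‑improvement / ranking‑error split from the text,
\[
\E\!\left[f(\theta^t)-f(\theta^{t+1})\,\middle|\,\theta^t\right]
\;=\;-\,\E\!\left[\Delta_t\,\mathbf{1}_{Y_t}\,\middle|\,\theta^t\right]
\;-\;\E\!\left[\Delta_t\big(\mathbf{1}_{X_t}-\mathbf{1}_{Y_t}\big)\,\middle|\,\theta^t\right].
\]
I would lower‑bound the first summand via Lemma~\ref{lem:error1bound}, which is exactly a bound on $\E[\Delta_t\mathbf{1}_{Y_t}\mid\theta^t]$, giving $-\E[\Delta_t\mathbf{1}_{Y_t}\mid\theta^t]\ge \tfrac{\alpha_t}{\sqrt{2\pi}}\|\nabla f(\theta^t)\|_2-\tfrac{L_f}{2}k\alpha_t^2$; and I would lower‑bound the second summand by $-\mathcal E_t$, which is immediate from the definition of $\mathcal E_t$ as the absolute value of that conditional expectation.

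Combining the two bounds yields
\[
\E\!\left[f(\theta^t)-f(\theta^{t+1})\,\middle|\,\theta^t\right]
\;\ge\;
\frac{\alpha_t}{\sqrt{2\pi}}\,\|\nabla f(\theta^t)\|_2-\frac{L_f}{2}k\alpha_t^2-\mathcal E_t ,
\]
and then substituting the bound on $\mathcal E_t$ from Lemma~\ref{lem:rank-penalty-tie} and rearranging. Collecting the coefficients of $\|\nabla f(\theta^t)\|_2$ (which merge into the $(1-\gamma_{N,r})$‑weighted gradient term, positive because $\gamma_{N,r}\in(0,1)$ since $\rho(r)\ge cr>0$) and of $\alpha_t^2$ (which merge into the $(1+\gamma_{N,r})$‑weighted quadratic term), and carrying the additive $\tfrac{2C+4/3}{e\,c\,N}$ through, produces the conditional form of the asserted inequality. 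Taking total expectation over $\theta^t$ and invoking the tower property removes the conditioning and gives the stated bound.

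\textbf{Main obstacle.} The step requiring real care is the invocation of Lemma~\ref{lem:rank-penalty-tie}: although it is phrased as a bound on $\mathcal E_t$, its proof routes through Lemma~\ref{lem:ranking_error}, which conditions on the pair $(\theta^t,s_t)$ and assumes $\Delta_t\neq 0$. So in assembling the descent bound I would first condition on $(\theta^t,s_t)$; on the event $\{\Delta_t=0\}$ both $\Delta_t\mathbf{1}_{X_t}$ and $\Delta_t\mathbf{1}_{Y_t}$ vanish, so that event contributes nothing to either term and is harmless; on $\{\Delta_t\neq 0\}$ I apply Lemma~\ref{lem:ranking_error}, then integrate over $s_t$ (this is where the Gaussian moment computations behind Lemmas~\ref{lem:error1bound} and~\ref{lem:rank-penalty-tie} inject the $\|\nabla f(\theta^t)\|_2$ and $k\alpha_t^2$ dependence, using that on the ridge $\langle s_t,\nabla f(\theta^t)\rangle$ depends only on $Ps_t$ and that $\E\|Ps_t\|_2^2=k$), and finally over $\theta^t$. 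Everything else — the true‑improvement/ranking‑error split, Lemma~\ref{lem:error1bound}, and the definition of $\mathcal E_t$ — is bookkeeping, and the additive $1/N$ term is inherited verbatim from Lemma~\ref{lem:rank-penalty-tie}; I do not expect any genuinely new difficulty beyond careful tracking of the $\gamma_{N,r}$‑weighted constants.
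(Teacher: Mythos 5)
Your proposal is correct and follows essentially the same route as the paper's own proof: the accept-or-stay identity $f(\theta^{t+1})=f(\theta^t)+\Delta_t\mathbf 1_{X_t}$, the true-improvement/ranking-error split, Lemma~\ref{lem:error1bound} for the first term, the $\mathcal E_t$ bound of Lemma~\ref{lem:rank-penalty-tie} for the second, and the tower property to remove the conditioning; your extra care about the $\{\Delta_t=0\}$ event is a harmless refinement the paper leaves implicit. One small caveat, which you inherit from the paper rather than introduce: since $\sqrt{2/\pi}=2/\sqrt{2\pi}$, collecting the gradient coefficients from Lemma~\ref{lem:error1bound} (prefactor $1/\sqrt{2\pi}$) and Lemma~\ref{lem:rank-penalty-tie} (prefactor $\sqrt{2/\pi}$) actually yields $\tfrac{\alpha_t}{\sqrt{2\pi}}(1-2\gamma_{N,r})$ rather than $\tfrac{\alpha_t}{\sqrt{2\pi}}(1-\gamma_{N,r})$, so the merging you describe is off by a factor of $2$ in the $\gamma_{N,r}$ term exactly as in the paper's final step; this only affects the constant (e.g.\ one should require $\gamma_{N,r}\le\tfrac14$ downstream) and not the structure of the argument.
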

\end{propbox}

Assuming moreover that $f$ is bounded below, averaging Proposition~\ref{lem:descent-tie}
with a constant stepsize yields the following rate.

\begin{propbox}
\begin{theorem}\label{thm:NCRS-tie-rate}
Assume that $f(x)=g(Ax)$ for some matrix
$A\in\R^{k\times d}$ with full row rank $\rank(A)=k\le d$ and some function $g:\R^k\to\R$,
and that $f$ is $L_f$-smooth and bounded below, i.e.,
$f^\star:=\inf_{x\in\R^d} f(x)>-\infty$.
Assume that Assumption~\ref{assump:signal-variance-score} holds.
Under Algorithm~\ref{alg:NCRS-tie} with a constant stepsize $\alpha_t=\alpha$ and a fixed number of comparisons $N$,
for all $T\ge 1$, we have:
{\small
\begin{align*}
\frac{1}{T}\sum_{t=1}^T \E\|\nabla f(\theta^t)\|_2
\ &\le\
\frac{\sqrt{2\pi}}{\alpha(1-\gamma_{N,r})}
\Bigg(
\frac{\Delta f}{T}
\\
&
+\frac{L_f}{2}\,k\,(1+\gamma_{N,r})\,\alpha^2
+\frac{2 C+\frac{4}{3}}{e\, c N}
\Bigg),
\end{align*}}

where $\Delta f=f(\theta^1)-f^{\star}$.
\end{theorem}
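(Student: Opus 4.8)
The plan is to obtain the stated bound directly by averaging the one-step descent inequality of Proposition~\ref{lem:descent-tie} over $t=1,\dots,T$ and telescoping the function-value differences. First I would instantiate Proposition~\ref{lem:descent-tie} with the constant stepsize $\alpha_t=\alpha$, which gives, for every $t\ge 1$,
\[
\frac{\alpha}{\sqrt{2\pi}}(1-\gamma_{N,r})\,\E\|\nabla f(\theta^t)\|_2
\;\le\; \E[f(\theta^t)-f(\theta^{t+1})] + \frac{L_f}{2}\,k\,(1+\gamma_{N,r})\,\alpha^2 + \frac{2C+\frac{4}{3}}{e\,c\,N}.
\]
Summing this over $t=1,\dots,T$, the first term on the right telescopes to $\E[f(\theta^1)-f(\theta^{T+1})]$, and since $f$ is bounded below we have $f(\theta^{T+1})\ge f^\star$, so this is at most $f(\theta^1)-f^\star=\Delta f$. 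The two remaining terms are $t$-independent and contribute $T$ copies each.

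Next I would divide the resulting inequality by $T$ and by the factor $\frac{\alpha}{\sqrt{2\pi}}(1-\gamma_{N,r})$. The point that makes this legitimate is that $1-\gamma_{N,r}>0$: by definition $\gamma_{N,r}=\exp\!\big(-N\rho(r)/(2C+\frac{4}{3})\big)$, and Assumption~\ref{assump:signal-variance-score} guarantees $\rho(r)\ge c\,r>0$, so with $N\ge 1$ the exponent is strictly negative and $\gamma_{N,r}\in(0,1)$. Rearranging then yields exactly
\[
\frac{1}{T}\sum_{t=1}^T \E\|\nabla f(\theta^t)\|_2
\;\le\; \frac{\sqrt{2\pi}}{\alpha(1-\gamma_{N,r})}\Bigg(\frac{\Delta f}{T} + \frac{L_f}{2}\,k\,(1+\gamma_{N,r})\,\alpha^2 + \frac{2C+\frac{4}{3}}{e\,c\,N}\Bigg),
\]
which is the claimed estimate.

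There is no substantial obstacle in this final step; essentially all of the work is already contained in Proposition~\ref{lem:descent-tie} and the lemmas feeding into it (Lemmas~\ref{lem:error1bound}, \ref{lem:ranking_error}, \ref{lem:rank-penalty-tie}). The only point requiring a line of care is verifying $1-\gamma_{N,r}>0$ so the final division preserves the inequality, and this is precisely where the near-origin lower bound $\rho(t)\ge c\,t$ of Assumption~\ref{assump:signal-variance-score} is used. I would also note that the bound is informative only once $N$ is large enough that $\gamma_{N,r}$ is bounded away from $1$ (so that the prefactor $1/(1-\gamma_{N,r})$ does not blow up); from there, optimizing $\alpha$ and $N$ to balance the $\Delta f/T$, $k\alpha^2$, and $1/N$ terms and extract the total-comparison complexity $NT=\mathcal{O}(k^2/\varepsilon^4)$ is a routine calculation best left to the accompanying remark rather than the proof itself.
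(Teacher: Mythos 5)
Your proposal is correct and matches the paper's own derivation: the theorem is obtained exactly by instantiating Proposition~\ref{lem:descent-tie} with the constant stepsize, telescoping the expected function-value differences against the lower bound $f^\star$, and dividing by $T$ and the positive prefactor $\tfrac{\alpha}{\sqrt{2\pi}}(1-\gamma_{N,r})$. Your added remark on why $\gamma_{N,r}<1$ is a sensible (if brief) point the paper leaves implicit.
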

\end{propbox}
\medskip
\begin{remark}
Fix $\varepsilon\in(0,1)$ and choose $N$ so that:
$
\gamma_{N,r}
\le \tfrac12.
$
For instance, it suffices to take
$
N \ \ge\ \frac{\bigl(2C+\tfrac{4}{3}\bigr)\,\log 2}{\rho(r)}.
$
Then $1-\gamma_{N,r}\ge \tfrac12$ and $1+\gamma_{N,r}\le \tfrac32$.
Applying Theorem~\ref{thm:NCRS-tie-rate} gives:
{\small
\[
\frac{1}{T}\sum_{t=1}^T \E\|\nabla f(\theta^t)\|_2
\ \le\
\frac{2\sqrt{2\pi}}{\alpha}\left(
\frac{\Delta f}{T}
+\frac{3L_f k}{4}\,\alpha^2
+\frac{l_{c,C}}{N}
\right),
\]}
where $l_{c,C}:=\frac{2C+\tfrac{4}{3}}{e\,c}$.
By choosing:
$$
\begin{cases}
\alpha \;:=\; \dfrac{2\varepsilon}{9\sqrt{2\pi}\,L_f k},\\[2mm]
T \;\ge\; \dfrac{54\pi\,L_f k\,\Delta f}{\varepsilon^2},\\[2mm]
N \;\ge\; \dfrac{54\pi\,L_f k\,l_{c,C}}{\varepsilon^2},
\end{cases}
$$
and additionally enforcing $N \ge \dfrac{(2C+\tfrac{4}{3})\log 2}{\rho(r)}$ to ensure $\gamma_{N,r}\le \tfrac12$,
we obtain:
$
\frac{1}{T}\sum_{t=1}^T \E\|\nabla f(\theta^t)\|_2 \le \varepsilon.
$
Therefore the total number of pairwise comparisons satisfies:
$$
NT \;=\; \mathcal O\!\left(
\frac{L_f^2\,k^2\,\Delta f}{\varepsilon^4}\cdot \frac{C+1}{c}
\right).
$$
\end{remark}

\section{Numerical Experiments}
\label{sec:experiments-gsds}

\subsection{Intrinsic Dimension Effect}

\begin{figure}[H]
    \centering
    \includegraphics[width=0.9\linewidth]{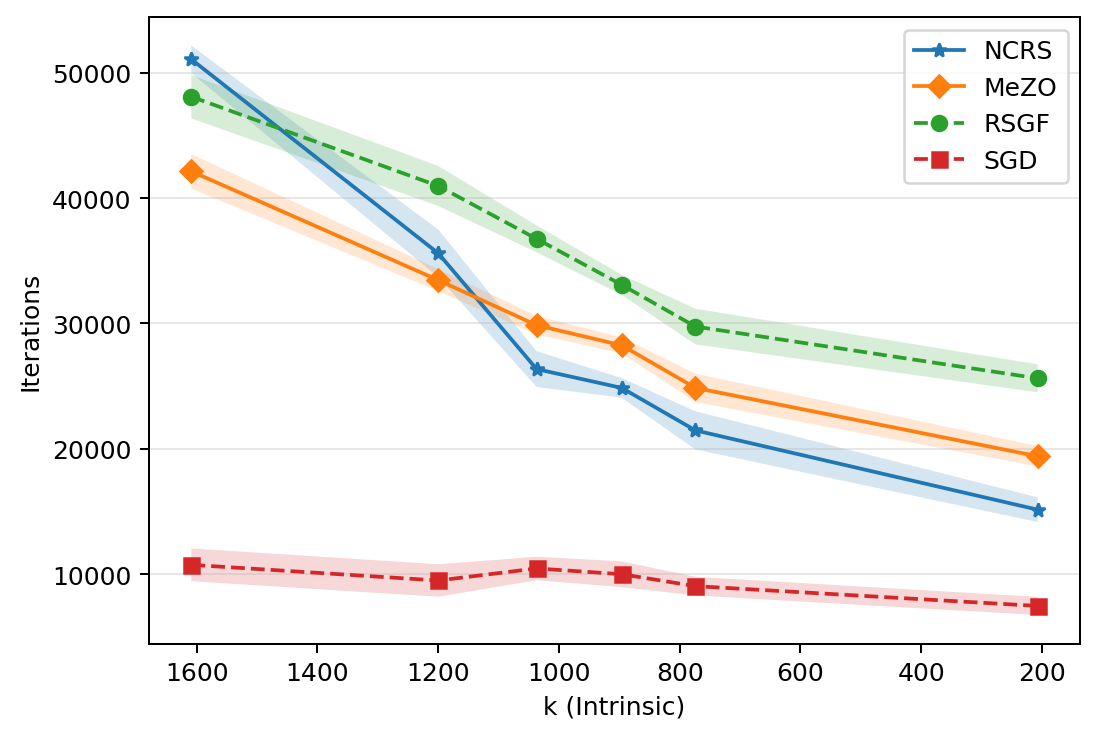}
     \caption{\textbf{Zeroth-order methods adapt to intrinsic dimension.} Number of iterations required to reach a fixed target accuracy across different language models fine-tuning settings with \textit{decreasing} intrinsic dimension. Shaded areas represent the 95\% CI over 5 runs.}
    \label{fig:llmIntrinsic}
\end{figure}

\begin{figure*}[t]
    \centering
    \includegraphics[width=1\linewidth]{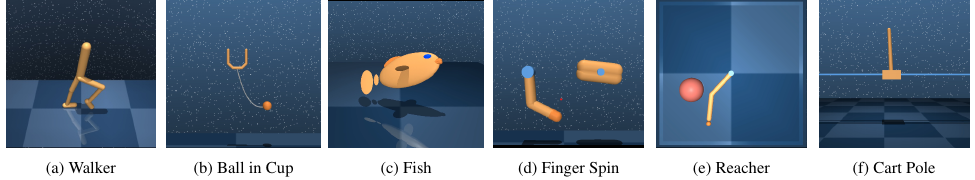}
    \caption{\textbf{Environments.} We evaluate preference-based reinforcement learning algorithms on six locomotion tasks.}
    \label{fig:envs}
\end{figure*}
\begin{figure*}[t]
    \centering
    \includegraphics[width=\linewidth]{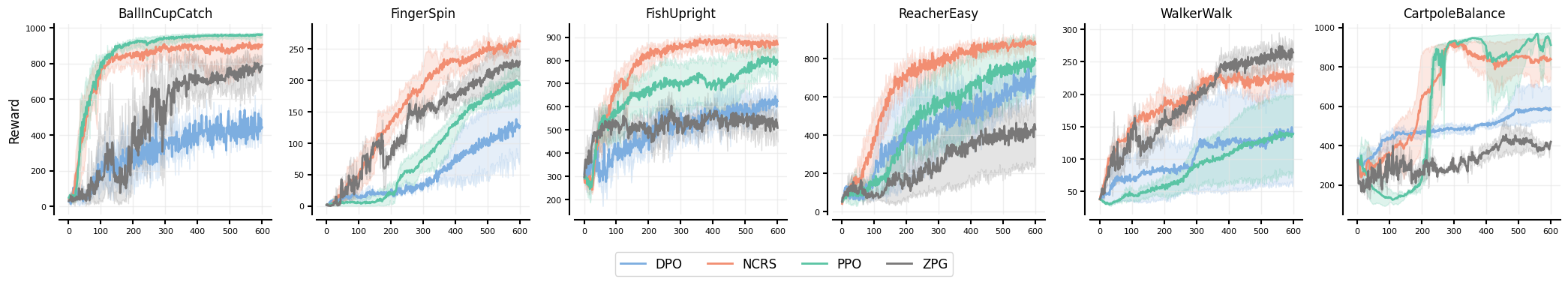}
    \caption{\textbf{Comparison between preference learning algorithms on DM Control Suite tasks.} Results show the mean ground truth reward and standard error (y-axis) over 600 episodes (x-axis) for 5 random seeds where each algorithm collects 64 trajectories per episode.} 
    \label{fig:rl}
\end{figure*}

We investigate the effect of intrinsic dimension on convergence using medium-sized masked language models. Despite having hundreds of millions of parameters, these models exhibit significantly lower intrinsic dimensions on downstream tasks after pre-training \citep{aghajanyan2021intrinsic, malladi2023fine}. We build on the setup in \citet{aghajanyan2021intrinsic}, which measured intrinsic dimension for various medium-sized models on two fine-tuning tasks (MRPC and QQP from the GLUE benchmark \citep{wang-etal-2018-glue}), but now perform fine-tuning with zeroth-order methods. 

\begin{table}[t]
\caption{Per-iteration update of NCRS, MeZO, and RSGF}
\label{tab:algo_per_iter}
\centering
\scriptsize
\setlength{\tabcolsep}{3pt}
\renewcommand{\arraystretch}{1.05}
\begin{tabularx}{\columnwidth}{@{}lX@{}}
\toprule
\textbf{Method} & \textbf{Iteration $t$ (two minibatch evaluations)} \\
\midrule
\textbf{NCRS (ours)} &
Sample $s_t\sim \mathcal{N}(0,I_d)$.
Evaluate $\hat f_{\mathcal B_t}(\theta^t)$ and $\hat f_{\mathcal B_t}(\theta^t+\alpha s_t)$ on the \emph{same} minibatch $\mathcal B_t$.
Set $\widehat\Delta_t=\hat f_{\mathcal B_t}(\theta^t+\alpha s_t)-\hat f_{\mathcal B_t}(\theta^t)$ and
$\widetilde R_t=\sign(\widehat\Delta_t)\in\{-1,+1\}$.
\textbf{Update:} if $\widetilde R_t=-1$ accept $\theta^{t+1}=\theta^t+\alpha s_t$, else reject $\theta^{t+1}=\theta^t$. \\
\midrule
\textbf{MeZO} &
Sample $u_t\sim \mathcal{N}(0,I_d)$.
Evaluate $\hat f_{\mathcal B_t}(\theta^t+\mu u_t)$ and $\hat f_{\mathcal B_t}(\theta^t-\mu u_t)$ on the \emph{same} minibatch $\mathcal B_t$.
Form the two-sided estimator
$
    \hat g_t =
    \frac{
    \hat f_{\mathcal B_t}(\theta^t+\mu u_t)
    -
    \hat f_{\mathcal B_t}(\theta^t-\mu u_t)
    }{2\mu}\,u_t .$
\textbf{Update:} $\theta^{t+1}=\theta^t-\eta_t \hat g_t$. \\
\midrule
\textbf{RSGF} &
Sample $u_t\sim \mathcal{N}(0,I_d)$.
Evaluate $\hat f_{\mathcal B_t}(\theta^t+\mu u_t)$ and $\hat f_{\mathcal B_t}(\theta^t)$ on the \emph{same} minibatch $\mathcal B_t$.
Form
$ \hat g_t =
    \frac{
    \hat f_{\mathcal B_t}(\theta^t+\mu u_t)
    -
    \hat f_{\mathcal B_t}(\theta^t)
    }{\mu}\,u_t .$
\textbf{Update:} $\theta^{t+1}=\theta^t-\eta_t \hat g_t$. \\
\bottomrule
\end{tabularx}
\end{table}

Each fine-tuning task is represented by a training dataset of context-target pairs $\mathcal{D} = \{(x_i, y_i
)\}_{i=1,\dots,N}$, where both $x_i$ and $y_i$ are sequences of tokens. Given a pretrained language model that defines $p_\theta(y_t|x,y_{<t})$, the training objective is to minimize the negative log-likelihood loss: 
$$
\min_\theta \sum_{i=1}^N \left[ - \sum_{t=1}^{|y_i|} \log p_\theta(y_{i,t} | x_i, y_{i,<t}) \right]
$$

Instead of only varying tasks that have different intrinsic dimensions (see Table~\ref{tab:ambientvsint}), we vary either the model itself (larger models with more parameters and sufficient pre-training tend to compress downstream datasets better and therefore have lower intrinsic dimensionality) or directly the dataset (easier, more compressible datasets have lower intrinsic dimension). For each setup, we first use standard stochastic gradient descent (SGD) to measure the best accuracy achieved on a validation set and record the number of iterations, i.e., gradient steps, required to reach it. Then, for the same setup, we optimize with NCRS, MeZO \citep{malladi2023fine}, and the one-sided finite-difference method RSGF~\citep{ghadimi2013stochastic}, and record the number of iterations, defined as two function evaluations for zeroth-order methods, needed to exceed the accuracy reached by SGD. For each method, we sweep over the hyperparameters and use the optimal configuration for recording the number of iterations, as discussed in Appendix~\ref{apdx:hyperparams-llm}.

In Figure~\ref{fig:llmIntrinsic}, we find that as the intrinsic dimension decreases, even when the number of parameters increases in some cases, the zeroth-order methods require fewer iterations to reach SGD's accuracy. NCRS, MeZO, and RSGF all exhibit this trend, suggesting that their efficiency is tied more closely to the intrinsic dimension than to the ambient parameter dimension. Among the zeroth-order methods, MeZO provides a strong baseline, while NCRS remains competitive and consistently outperforms RSGF. In the lower intrinsic-dimension regimes, NCRS further closes the gap with SGD, making comparison-based random search an efficient alternative on such tasks. It is also important to note that zeroth-order methods are not only viable for easier fine-tuning tasks; rather, their efficiency appears to be driven primarily by better pre-training and larger models. Thus, scaling does not hurt but instead emphasizes the efficiency of zeroth-order methods.

\begin{table}[H]
  \caption{Intrinsic dimension $k$ for different sentence understanding tasks and pre-trained models with number of parameters $d$.}
  \label{tab:ambientvsint}
  \begin{center}
    \begin{small}
      \begin{sc}
        \begin{tabular}{lcccr}
          \toprule
          Model & Data set & $d$ & $k$  \\
          \midrule
          BERT-Base     & MRPC & 110M & 1608  \\
          BERT-Large    & QQP  & 340M & 1200  \\
          BERT-Large    & MRPC & 340M & 1037  \\
          RoBERTa-Base  & MRPC & 125M & 896   \\
          RoBERTa-Large & QQP  & 355M & 774   \\
          RoBERTa-Large & MRPC & 355M & 207   \\
          \bottomrule
        \end{tabular}
      \end{sc}
    \end{small}
  \end{center}
  \vskip -0.1in
\end{table}

To further isolate the intrinsic-dimension effect, Appendix~\ref{app:synthetic-k-scaling} reports a controlled synthetic ridge experiment in which the ambient dimension \(d\) is fixed and the intrinsic dimension \(k\) is varied. This experiment exactly matches the ridge model analyzed in the theory and empirically confirms the predicted linear dependence on \(k\) for the uniform-margin oracle and quadratic total-comparison dependence for the confidence-oracle setting.

\subsection{Preference-Based Reinforcement Learning}
We consider the reinforcement learning problem where no access to a reward function is given, and instead, we can only receive trajectory preferences from an expert. The goal is then to learn a policy that maximizes the expert's utility function. In addition, we assume that the expert not only generates a binary preference but also a confidence score \citep{touvron2023llama, kim2024margin}. NCRS with confidence-weighted vote naturally fits this setting alongside commonly used preference learning algorithms like RLHF \citep{christiano2017deep}, iterative DPO \citep{rafailov2023direct, guo2024direct}, or zeroth-order alternative ZPG \citep{zhang2025zerothorder}. RLHF methods fit the utility function by assuming the probabilistic link is logistic, then maximizing the utility with an online RL algorithm like PPO \citep{schulman2017proximal}. DPO instead bypasses fitting the reward function through a reparameterization of the reward optimization objective. Both algorithms can incorporate the confidence score directly into the objective through a soft label in the cross-entropy loss. On the other hand, ZPG estimates the soft label with a majority vote over $M$ independent human experts. 

To evaluate these algorithms, we simulate expert feedback to solve RL tasks in the DeepMind Control Suite \citep{tassa2018deepmind, tunyasuvunakool2020dm_control}. We use simulated feedback where preferences are based on the environment's task reward function, using the logistic function as a probabilistic link $\sigma$. We incorporate the setup from the B-Pref benchmark \citep{lee2021b} to account for various human irrationalities (mistakes, myopia, etc.) ensuring a more realistic setup. The confidence score for baselines like RLHF and DPO comes from $\sigma$ directly, whereas for NCRS, we apply the transformation $2\sigma(\Delta) - 1$ to map the probability to the signed confidence score $\tilde{R} \in [-1, 1]$. While the algorithms use different assumptions and compute budgets, we evaluate them under the same query budget, with additional hyperparameter details found in Appendix \ref{apdx:hyperparams-rl}. In Figure \ref{fig:rl}, we find NCRS achieves competitive performance against other baselines.

\section{Limitations and Conclusion}
\noindent\textbf{Limitations.}
Our guarantees rely on a fixed ridge-type active-subspace model and smoothness of the objective. This model isolates intrinsic-dimension effects, but practical objectives may only approximately satisfy such a structure or may have active subspaces that vary along the optimization trajectory. The theoretical step sizes and vote counts also depend on problem constants, motivating adaptive variants.

\noindent\textbf{Conclusion.}
We studied smooth nonconvex optimization under noisy pairwise-comparison feedback. 
Our analysis shows that even when comparisons are unreliable near ties, they can still provide enough directional information to obtain stationarity guarantees. 
Under low-dimensional structure, NCRS further replaces dependence on the ambient dimension $d$ by dependence on the intrinsic dimension $k$.

\section*{Impact Statement}
This work is primarily theoretical and aims to advance optimization from comparison feedback. We do not identify direct negative societal consequences specific to the proposed analysis. However, in applications such as recommendation, ranking, or model alignment, comparison data may be biased or unrepresentative, and deployments should account for this.

\section*{Acknowledgments}
The authors gratefully acknowledge the computing resources provided by the Toubkal Supercomputer at UM6P, Morocco \citep{kissami2025toubkal}.

\bibliography{example_paper}
\bibliographystyle{icml2026}

%%%%%%%%%%%%%%%%%%%%%%%%%%%%%%%%%%%%%%%%%%%%%%%%%%%%%%%%%%%%%%%%%%%%%%%%%%%%%%%
%%%%%%%%%%%%%%%%%%%%%%%%%%%%%%%%%%%%%%%%%%
%%%%%%%%%%%%%%%%%%%%%%%%%%%%%%%%%%%%%
% APPENDIX
%%%%%%%%%%%%%%%%%%%%%%%%%%%%%%%%%%%%%%%%%%%%%%%%%%%%%%%%%%%%%%%%%%%%%%%%%%%%%%%
%%%%%%%%%%%%%%%%%%%%%%%%%%%%%%%%%%%%%%%%%%%%%%%%%%%%%%%%%%%%%%%%%%%%%%%%%%%%%%%

\newpage
\appendix
\onecolumn
\section{Preliminary}
\begin{propbox}
\begin{lemma}
\label{lem:gradsubspace-iff-ridge}
Let $f:\R^d\to\R$ be differentiable and let $k\in\{1,\dots,d\}$.
The following statements are equivalent:
\begin{enumerate}
\item \textbf{(Gradient lives in a fixed $k$-subspace)} There exists a linear subspace $V\subset\R^d$ with $\dim(V)=k$ such that:
$$
\forall x\in\R^d,\,\, \nabla f(x)\in V.
$$
\item \textbf{(Ridge representation)} There exist a full row-rank matrix $A\in\R^{k\times d}$ and a differentiable function $g:\R^k\to\R$
such that:
$$
\label{eq:ridge-repr-lemma}
\forall x\in\R^d,\,\,\ f(x)=g(Ax),
$$
\end{enumerate}

\end{lemma}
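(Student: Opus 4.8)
The plan is to prove the two implications separately, with essentially all of the work in (1)$\Rightarrow$(2).

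For (2)$\Rightarrow$(1) I would simply differentiate: the chain rule gives $\nabla f(x)=A^\top\nabla g(Ax)\in\range(A^\top)$ for every $x$, and since $\rank(A)=k$ the subspace $V:=\range(A^\top)$ has dimension exactly $k$, so (1) holds with this $V$.

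For (1)$\Rightarrow$(2), write $W:=V^\perp$, so the hypothesis becomes $\nabla f(x)\perp W$ for all $x$. The first step is to show that $f$ is invariant under translations by $W$: for fixed $x\in\R^d$ and $w\in W$, the scalar function $\phi(t):=f(x+tw)$ is differentiable with $\phi'(t)=\ip{\nabla f(x+tw)}{w}=0$, hence $\phi$ is constant and $f(x+w)=f(x)$. The second step is to build $(A,g)$ explicitly. I would pick any basis $v_1,\dots,v_k$ of $V$, let $A\in\R^{k\times d}$ have rows $v_1^\top,\dots,v_k^\top$ (so $A$ has full row rank, $\range(A^\top)=V$, and $\ker(A)=(\mathrm{span}\{v_i\})^\perp=V^\perp=W$), and set $A^\dagger:=A^\top(AA^\top)^{-1}$, which is a right inverse of $A$ since $AA^\dagger=I_k$. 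Defining $g:=f\circ A^\dagger:\R^k\to\R$ makes $g$ differentiable as a composition of differentiable maps; and for any $x\in\R^d$ one has $A(x-A^\dagger Ax)=Ax-Ax=0$, so $x-A^\dagger Ax\in\ker(A)=W$, whence the invariance from the first step gives $f(x)=f(A^\dagger Ax)=g(Ax)$. This is exactly (2).

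I expect the only genuine subtlety to be obtaining a truly \emph{differentiable} $g$ rather than merely a well-defined one. The quotient viewpoint ($f$ descends to $\R^d/W\cong\R^k$ by the invariance step) gives well-definedness but not regularity for free; the pseudoinverse construction $g=f\circ A^\dagger$ sidesteps this by exhibiting $g$ directly as a composition of differentiable maps, with the identity $f=g\circ A$ then recovered from the $W$-invariance. The invariance step itself is routine once one observes that $\nabla f(x)\in V\iff\nabla f(x)\perp W$ and applies one-variable calculus along the segment $t\mapsto x+tw$.
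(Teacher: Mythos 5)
Your proof is correct and follows essentially the same route as the paper's: the chain rule for (2)$\Rightarrow$(1), and for (1)$\Rightarrow$(2) the invariance of $f$ along $V^\perp$ via one-variable calculus followed by a right inverse of $A$ to produce a differentiable $g$. The only (cosmetic) difference is that you define $g:=f\circ A^\dagger$ directly and then verify $f=g\circ A$ from the invariance, whereas the paper first defines $g$ on the quotient and only afterwards identifies it with $f\circ B$ to get differentiability; both arguments are sound.
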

\end{propbox}

\begin{proof}
\textbf{(2) $\Rightarrow$ (1).}
Let $x \in \mathbb{R}^d$. We have $\langle \nabla f(x),v\rangle=0$ for all $v \in Ker(A)$. This implies that  $\nabla f(x) \in \range(A^\top)=Ker(A)^\perp,$ and we have $\dim(\range(A^\top))=k.$

\medskip
\textbf{(1) $\Rightarrow$ (2).}
Assume there exists a $k$-dimensional subspace $V$ such that $\nabla f(x)\in V$ for all $x\in \mathbb{R}^d$. Using the mean value theorem, we prove that $\forall x \in \mathbb{R}^d, \forall u \in V^\perp, \,\, f(x+u)=f(x).$ Let $A$ be a matrix such that $\range(A^\top)=V$. This implies that for all $x,y \in \mathbb{R}^d,$ if $Ax=Ay,$ then we have $f(x)=f(y).$ 

Since $A$ has full row rank, the map $x\mapsto Ax$ is surjective onto $\R^k$, so we can define
\[
g:\R^k\to\R,\,\, g(z):=f(x)\ \ \text{for any }x\in\R^d\text{ such that }Ax=z.
\]
This is well-defined because $Ax=Ay$ implies $f(x)=f(y)$.
By construction, for all $x\in\R^d$ we have  $f(x)=g(Ax)$. Let $B\in\R^{d\times k}$ be a right inverse of $A$. Then $A(Bz)=z$ for all $z\in\R^k$,
hence by the definition of $g$ we have $g(z)=f(Bz)$. Since $f$ is differentiable and $z\mapsto Bz$ is linear,
$g$ is differentiable.
\end{proof}

\section{Hyperparameters}
\subsection{LLM Setting}
\label{apdx:hyperparams-llm}

To make sure we use a strong configuration for each method before recording the number of iterations needed to reach the target accuracy, we tune the relevant hyperparameters using validation loss. For SGD and NCRS, we sweep the learning rate \(\alpha\), as shown in Figure~\ref{fig:1dSweep}. For RSGF, we sweep both the learning rate \(\alpha\) and the smoothing parameter \(\mu\), as shown in Figure~\ref{fig:2dSweep}. For MeZO, we follow the same validation-based tuning protocol over its learning rate and perturbation parameter. Each algorithm then uses the configuration that records the lowest validation loss.

\begin{figure}[h]
    \centering
    \includegraphics[width=0.55\linewidth]{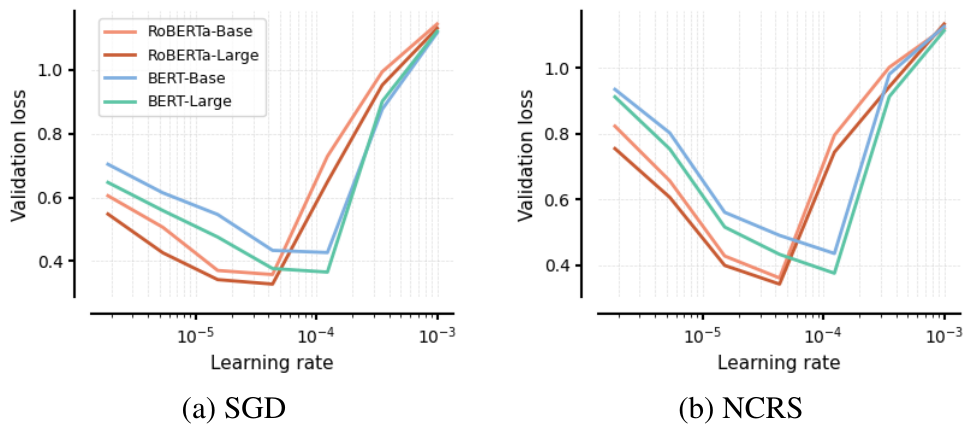}
    \caption{\textbf{Learning rate sweep.} Validation loss for different models on the MRPC task for SGD and NCRS.}
    \label{fig:1dSweep}
\end{figure}
\begin{figure}[h]
    \centering
    \includegraphics[width=\linewidth]{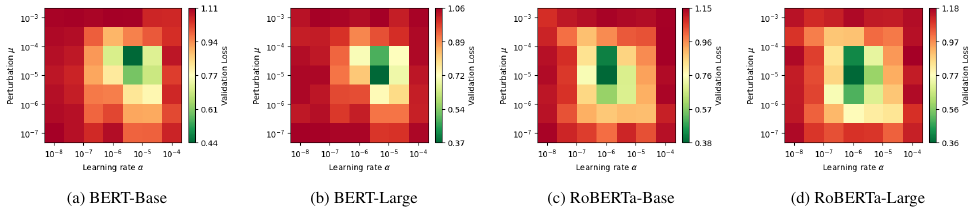}
    \caption{\textbf{2D parameter sweep.} Validation loss for different models on the MRPC task for RSGF with varying learning rate $\alpha$ and perturbation $\mu$.}
    \label{fig:2dSweep}
\end{figure}

\subsection{Preference-Based RL}
\label{apdx:hyperparams-rl}

Policies are parametrized by a neural network with two hidden layers of 256 units and tanh activations. The network outputs the mean of a Gaussian distribution, while the standard deviation is learned as a separate parameter. We initialize weights using orthogonal initialization and set biases to zero. Complete hyperparameters for all algorithms are listed in Table \ref{tab:hyperparams}.

\begin{table}[t]
    \caption{
    \footnotesize
    \textbf{Hyperparameters for Preference-Based RL Algorithms (PPO, DPO, NCRS, ZPG).}
    }
    \label{tab:hyperparams}
    \vspace{5pt}
    \begin{center}
    \begin{tabular*}{\textwidth}{l @{\extracolsep{\fill}} c}
        \toprule
        Hyperparameter & Value \\
        \midrule
        Number of trajectories collected per episode (\textbf{for all algorithms}) & $6$4 \\
        \midrule
        \multicolumn{2}{l}{{\textbf{Proximal Policy Optimization (PPO)}}} \\
        Actor Learning Rate Schedule & Warmup-Stable \\
        Actor Warmup Steps & $2$0 \\
        Actor Maximum Learning Rate & $3 \times 10^{-4}$ \\
        Critic MLP dimensions & $(256, 256)$ \\
        Critic Learning Rate & $1 \times 10^{-3}$ \\
        Clip Parameter $\epsilon$ & $0.2$ \\
        GAE $\lambda$ & $0.95$ \\
        Discount Factor $\gamma$ & $0.99$ \\
        Reward Model MLP dimensions & $(256, 256)$ \\
        Reward Model Batch Size & $64$ \\
        Reward Model Learning Rate & $4 \times 10^{-3}$ \\
        Optimizer & Adam \citep{2015-kingma} \\
        Number of gradient steps per update (same for actor, critic, and reward model) & $10$ \\
        Maximum gradient norm & $0.5$ \\
        \midrule
        \multicolumn{2}{l}{\textbf{Direct Preference Optimization (DPO)}} \\
        $\beta$ & $0.1$ \\
        Learning Rate &  $3 \times 10^{-4}$ \\
        Batch Size & $32$ \\
        Number of gradient steps per update & $5$ \\
        Optimizer & Adam \citep{2015-kingma} \\
        Maximum gradient norm & $0.5$ \\
        \midrule
        \multicolumn{2}{l}{\textbf{Noisy Pairwise-Comparison Random Search (NCRS)}} \\
        Learning Rate Schedule &  Cosine-Decay\\
        Number of decaying steps & $480$\\
        Maximum Learning Rate & $4 \times 10^{-2}$ \\
        Minimum Learning Rate & $4 \times 10^{-3}$ \\
        \midrule
        \multicolumn{2}{l}{\textbf{Zeroth-order Policy Gradient from Human Feedback (ZPG)}} \\
        Smoothing parameter $\mu$ & $4.5 \times 10^{-2}$ \\
        Learning Rate & $2 \times 10^{-3}$ \\
        \bottomrule
    \end{tabular*}
    \end{center}
\end{table}

\section{Convergence Analysis of NCRS Algorithm}

\begin{propbox}
\begin{lemma}
[Lemma \ref{lem:NCRS-composite-one-step-k}]
Assume that $f$ admits a ridge representation $f(x)=g(Ax)$ for some matrix $A\in\R^{k\times d}$ with full row rank $\rank(A)=k\le d$
and some function $g:\R^k\to\R$, and that $f$ is $L_f$-smooth.
Let $(\theta^t)$ be the iterates produced by \Cref{alg:NCRS-sto}, and suppose the ranking oracle satisfies Assumption \ref{assump:uniform-ranking}
with parameter $p$. Then for all $t\ge 1$, we have:
\[
p \alpha_t\sqrt{\frac{2}{\pi}} \mathbb{E}\|\nabla f(\theta^t)\|_2\le \mathbb{E}[ f(\theta^t)-f(\theta^{t+1}) ]+\frac{L_f k \alpha_t^2}{2}.
\]

\end{lemma}
\end{propbox}

\begin{proof}[Proof of Lemma~\ref{lem:NCRS-composite-one-step-k}]
Fix $t\ge 1$ and condition on $\theta^t$.
Let
$
\mathcal A_t:=\Big\{R(\theta^t,\theta^t+\alpha_t s_t)=+1\Big\}
$
be the event that NCRS accepts the candidate $\theta^{t}+\alpha_t s_t$. We remark that:
$$
f(\theta^t)-f(\theta^{t+1})
=
\big(f(\theta^t)-f(\theta^t+\alpha_t s_t)\big)\,\mathbf 1_{\mathcal A_t},
$$
and therefore:
$$
\mathbb{E}\!\left[f(\theta^t)-f(\theta^{t+1})\mid \theta^t,s_t\right]
=
\big(f(\theta^t)-f(\theta^t+\alpha_t s_t)\big)\,
\mathbb{P}\!\left(\mathcal A_t\mid \theta^t,s_t\right).
$$
By Assumption~\ref{assump:uniform-ranking}, if $f(\theta^t+\alpha_t s_t)<f(\theta^t)$ then
$\mathbb{P}(\mathcal A_t\mid \theta^t,s_t)\ge \tfrac12+p$, while if $f(\theta^t+\alpha_t s_t)>f(\theta^t)$ then
$\mathbb{P}(\mathcal A_t\mid \theta^t,s_t)\le \tfrac12-p$.
This implies that: 
$$
\big(f(\theta^t)-f(\theta^t+\alpha_t s_t)\big)\,
\mathbb{P}\!\left(\mathcal A_t\mid \theta^t,s_t\right)
\ \ge\
\Big(\tfrac12-p\Big)\big(f(\theta^t)-f(\theta^t+\alpha_t s_t)\big)
\;+\;
2p\,\big[f(\theta^t)-f(\theta^t+\alpha_t s_t)\big]_+,
$$
where $[u]_+:=\max\{u,0\}$. It follows that:
\begin{equation}\label{eq_sto_cle1}
\mathbb{E}\!\left[f(\theta^t)-f(\theta^{t+1})\mid \theta^t\right]
\ \ge\
\Big(\tfrac12-p\Big)\,\mathbb{E}\!\left[f(\theta^t)-f(\theta^t+\alpha_t s_t)\mid \theta^t\right]
\;+\;
2p\,\mathbb{E}\!\left[\big[f(\theta^t)-f(\theta^t+\alpha_t s_t)\big]_+\mid \theta^t\right].    
\end{equation}

 Let $P:=A^\top(AA^\top)^{-1}A$ the orthogonal projector onto $\mathrm{range}(A^\top)$. Since $f$ is $L_f$ smooth, we have:

\begin{align*}
f(\theta^t+\alpha_t s_t) 
&= g\!\left(A\theta^t+\alpha_t A s_t\right) \\
&=g(A \theta^t+\alpha_t A P s_t)
\\
&=f( \theta^t+\alpha_t  P s_t)
\\
&\le f(\theta^t)
   + \alpha_t\left\langle \nabla f(\theta^t), P s_t \right\rangle
   + \frac{L_f}{2}\,\alpha_t^2 \,\|P s_t\|_2^2\\
   &= f(\theta^t)
   + \alpha_t\left\langle P^\top  \nabla f(\theta^t),  s_t \right\rangle
   + \frac{L_f}{2}\,\alpha_t^2 \,\|P s_t\|_2^2\\
   &= f(\theta^t)
   + \alpha_t\left\langle P  \nabla f(\theta^t),  s_t \right\rangle
   + \frac{L_f}{2}\,\alpha_t^2 \,\|P s_t\|_2^2.
\end{align*}
We used in the last inequality that \(P^\top = P\), since \(P\) is an orthogonal projector.

Note that: \begin{equation}\label{eq:grad-in-range-AT}
\forall x\in\R^d,\,\, \nabla f(x)\ \in\ \mathrm{range}(A^\top).
\end{equation} Indeed, for any $v\in\ker(A)$,
the map $t\mapsto f(x+tv)$ is constant on $\mathbb{R}$, hence
$\langle \nabla f(x),v\rangle=0$ for all $v\in\ker(A)$.
Therefore $\nabla f(x)\perp \ker(A)$, which implies
$\nabla f(x)\in \mathrm{range}(A^\top)$ and thus $P\nabla f(x)=\nabla f(x)$ for all $x\in\R^d$.
Consequently: $$ f(\theta^t) -f(\theta^t+\alpha_t s_t) \ge 
   - \alpha_t\left\langle   \nabla f(\theta^t),  s_t \right\rangle
   - \frac{L_f}{2}\,\alpha_t^2 \,\|P s_t\|_2^2.$$
This implies that:
\begin{equation}\label{eq_sto_cle2}
  \mathbb{E}\!\left[f(\theta^t)-f(\theta^{t}+\alpha_t s_t)\mid \theta^t\right]  \ge -\frac{L_f}{2}\alpha_t^2 \mathbb{E}\| P s_t \|_2^2.
\end{equation}
It also implies that:
\begin{align*}
 \big[f(\theta^t)-f(\theta^t+\alpha_t s_t)\big]_+
&\ge
\Big[- \alpha_t\left\langle   \nabla f(\theta^t),  s_t \right\rangle
   - \frac{L_f}{2}\,\alpha_t^2 \,\|P s_t\|_2^2\Big]_+\\
&\ge
\alpha_t\Big[-\left\langle   \nabla f(\theta^t),  s_t \right\rangle\Big]_+
   - \frac{L_f}{2}\,\alpha_t^2 \,\|P s_t\|_2^2\\
   &=\alpha_t \frac{-\left\langle   \nabla f(\theta^t),  s_t \right\rangle+|\left\langle   \nabla f(\theta^t),  s_t \right\rangle|}{2}- \frac{L_f}{2}\,\alpha_t^2 \,\|P s_t\|_2^2
\end{align*}
It follows that:\begin{equation}\label{eq_sto_cle3}
    \mathbb{E}\!\left[\big[f(\theta^t)-f(\theta^t+\alpha_t s_t)\big]_+\ \middle|\ \theta^t\right]
\ \ge\
\frac{\alpha_t}{2}\,
\mathbb{E}\!\left[\left|\big\langle \nabla f(\theta^t),s_t\big\rangle\right|\ \middle|\ \theta^t\right]
\;-\;
\frac{L_f}{2}\,\alpha_t^2\,\mathbb{E}\!\left[\|P s_t\|_2^2\right].
\end{equation}
Combining the inequalities \ref{eq_sto_cle1},  \ref{eq_sto_cle2} and \ref{eq_sto_cle3}, we obtain:
$$\mathbb{E}\!\left[f(\theta^t)-f(\theta^{t+1})\mid \theta^t\right]\ge -(\frac{1}{2}-p)\frac{L_f \alpha_t^2}{2} \mathbb{E}\|P s_t\|_2^2+p\alpha_t\mathbb{E}\!\left[\left|\big\langle \nabla f(\theta^t),s_t\big\rangle\right|\ \middle|\ \theta^t\right]-p L_f \alpha_t^2\mathbb{E}\|P s_t\|_2^2.$$
We remark that
$
\mathbb{E}\!\left[\left|\big\langle \nabla f(\theta^t), s_t \big\rangle\right|\ \middle|\ \theta^t\right]
=
\|\nabla f(\theta^t)\|_2\,\mathbb{E}_{u\sim\mathcal N(0,1)}[|u|] 
=
\sqrt{\frac{2}{\pi}}\;\|\nabla f(\theta^t)\|_2.
$
Therefore:
$$p \alpha_t\sqrt{\frac{2}{\pi}} \mathbb{E}\|\nabla f(\theta^t)\|_2\le \mathbb{E}[ f(\theta^t)-f(\theta^{t+1}) ]+\frac{L_f \alpha_t^2}{2} \mathbb{E}\|P s_t\|_2^2.$$
We also note that, for $s\sim\mathcal N(0,I_d)$, we have:
$$
\mathbb{E}\|Ps\|_2^2
=\mathbb{E}\!\left[s^\top P^\top P s\right]
=\operatorname{tr}\!\left(P^\top P\,\mathbb{E}[ss^\top]\right)
=\operatorname{tr}(P^\top P)
=\operatorname{tr}(P^2)
=\operatorname{tr}(P)
=k,
$$
where we used $\mathbb{E}[ss^\top]=I_d$, $P^\top=P$, $P^2=P$, and $\operatorname{tr}(P)=\rank(P)=k$. We deduce that:
$$p \alpha_t\sqrt{\frac{2}{\pi}} \mathbb{E}\|\nabla f(\theta^t)\|_2\le \mathbb{E}[ f(\theta^t)-f(\theta^{t+1}) ]+\frac{L_f k \alpha_t^2}{2}.$$

\end{proof}

\section{Gap-Dependent Confidence Oracle: NCRS Analysis with Confidence-Weighted Vote}

\begin{propbox}
\begin{lemma}[\Cref{lem:error1bound}]
Assume that $f(x)=g(Ax)$ for some matrix $A\in\R^{k\times d}$ with full row rank $\rank(A)=k\le d$
and some function $g:\R^k\to\R$, and that $f$ is $L_f$-smooth. Under Algorithm \ref{alg:NCRS-tie}, for all \(t\ge 1\), we have:
$$
  \E\!\left[\Delta_t\,\mathbf{1}_{Y_t}\,\middle|\,\theta^t\right]
  \;\le\;
  -\,\frac{\alpha_t}{\sqrt{2\pi}}\;\|\nabla f(\theta^t)\|_2
  \;+\;\frac{L_f}{2}\,k\,\alpha_t^2.
$$
\end{lemma}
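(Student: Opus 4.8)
The plan is to rewrite the ``true-improvement'' term $\Delta_t\mathbf 1_{Y_t}$ as a \emph{clipped} quantity, apply $L_f$-smoothness together with the ridge-invariance identity to bound $\Delta_t$ from above, and then take the conditional expectation using standard Gaussian moments. First observe that, since $Y_t=\{\Delta_t<0\}$, we have $\Delta_t\mathbf 1_{Y_t}=\min(\Delta_t,0)$ almost surely. The whole point of keeping the clip $\min(\cdot,0)$ — rather than the cruder bound $\Delta_t\mathbf 1_{Y_t}\le\Delta_t$ — is that $\E[\Delta_t\mid\theta^t]=\tfrac{L_f}{2}k\alpha_t^2$ carries no descent, whereas the clipped version retains the negative drift coming from the Gaussian cross term.

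\textbf{Upper bounding $\Delta_t$.} Because $f(x)=g(Ax)$, the ridge-invariance identity $f(\theta^t+\alpha_t s_t)=f(\theta^t+\alpha_t Ps_t)$ holds, where $P:=A^\top(AA^\top)^{-1}A$ is the orthogonal projector onto $\range(A^\top)$. Applying $L_f$-smoothness along the direction $Ps_t$ at $\theta^t$, and using $P\nabla f(\theta^t)=\nabla f(\theta^t)$ (since $\nabla f(x)\in\range(A^\top)$ for all $x$, exactly as in \eqref{eq:grad-in-range-AT}), one gets
\[
\Delta_t\ \le\ \alpha_t\langle\nabla f(\theta^t),s_t\rangle+\tfrac{L_f}{2}\alpha_t^2\|Ps_t\|_2^2 .
\]
Since $u\mapsto\min(u,0)$ is nondecreasing, $\min(\Delta_t,0)\le\min\!\bigl(\alpha_t\langle\nabla f(\theta^t),s_t\rangle+\tfrac{L_f}{2}\alpha_t^2\|Ps_t\|_2^2,\,0\bigr)$, and then the elementary inequality $\min(a+c,0)\le\min(a,0)+c$ (valid for every $c\ge 0$) applied with $c=\tfrac{L_f}{2}\alpha_t^2\|Ps_t\|_2^2\ge 0$ and $a=\alpha_t\langle\nabla f(\theta^t),s_t\rangle$ yields
\[
\Delta_t\mathbf 1_{Y_t}\ \le\ \alpha_t\,\min\!\bigl(\langle\nabla f(\theta^t),s_t\rangle,\,0\bigr)+\tfrac{L_f}{2}\alpha_t^2\|Ps_t\|_2^2 .
\]

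\textbf{Taking expectations and the final computation.} Conditioning on $\theta^t$, the cross term satisfies $\langle\nabla f(\theta^t),s_t\rangle\sim\mathcal N\!\bigl(0,\|\nabla f(\theta^t)\|_2^2\bigr)$, so $\E[\min(\langle\nabla f(\theta^t),s_t\rangle,0)\mid\theta^t]=-\|\nabla f(\theta^t)\|_2\,\E[Z_+]$ with $Z\sim\mathcal N(0,1)$, and $\E[Z_+]=\tfrac12\E|Z|=\tfrac{1}{\sqrt{2\pi}}$; for the second-order term, $\E\|Ps_t\|_2^2=\tr(P^\top P)=\tr(P)=\rank(P)=k$, exactly as in the proof of \Cref{lem:NCRS-composite-one-step-k}. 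Combining gives $\E[\Delta_t\mathbf 1_{Y_t}\mid\theta^t]\le-\tfrac{\alpha_t}{\sqrt{2\pi}}\|\nabla f(\theta^t)\|_2+\tfrac{L_f}{2}k\alpha_t^2$, which is the claim. Everything here is routine once the structure is set up; the only step requiring a bit of care is the clipping argument in the previous paragraph — one must propagate the $\min(\cdot,0)$ through the smoothness bound (via monotonicity plus $\min(a+c,0)\le\min(a,0)+c$) rather than discard it, otherwise the $-\tfrac{\alpha_t}{\sqrt{2\pi}}\|\nabla f(\theta^t)\|_2$ descent term is lost.
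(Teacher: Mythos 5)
Your proof is correct and follows essentially the same route as the paper's: the identity $\Delta_t\mathbf 1_{Y_t}=\min(\Delta_t,0)$, the ridge-invariance plus smoothness bound $\Delta_t\le\alpha_t\langle\nabla f(\theta^t),s_t\rangle+\tfrac{L_f}{2}\alpha_t^2\|Ps_t\|_2^2$, the clipping inequality $\min(a+c,0)\le\min(a,0)+c$, and the Gaussian moments $\E[\min(\langle\nabla f(\theta^t),s_t\rangle,0)\mid\theta^t]=-\tfrac{1}{\sqrt{2\pi}}\|\nabla f(\theta^t)\|_2$ and $\E\|Ps_t\|_2^2=k$. The only cosmetic difference is that the paper evaluates the clipped Gaussian term via $\min(u,0)=\tfrac{u-|u|}{2}$ rather than via $\E[Z_+]$, which gives the same constant.
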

\end{propbox}
\begin{proof}[Proof of \Cref{lem:error1bound}]
Let $P:=A^\top(AA^\top)^{-1}A$ the orthogonal projector onto $\mathrm{range}(A^\top)$.
By $L_f$-smoothness, we have:
\begin{align*}
\Delta_t&=f(\theta^t+\alpha P s_t)-f(\theta^t)\\
&\le \alpha_t\,\langle\nabla f(\theta^t),\,P s_t\rangle\ +\ \tfrac{L_f}{2}\alpha_t^2\,\|P s_t\|^2\\
&=\alpha_t\,\langle\nabla f(\theta^t),\, s_t\rangle\ +\ \tfrac{L_f}{2}\alpha_t^2\,\|P s_t\|^2.
\end{align*}

Therefore:
$$
\Delta_t\,\mathbf{1}_{Y_t}
\ \le\ \min(\Delta_t,0)
\ \le\ \min(\alpha_t\langle\nabla f(\theta^t),s_t\rangle,\,0)
   + \tfrac{L_f}{2}\alpha_t^2\|Ps_t\|_2^2,
$$
where we used $\min(y+b,0)\le \min(y,0)+b$ for any $b\ge0$.
Taking conditional expectation and using $s_t\sim\mathcal N(0,I_d)$,
$$
\begin{aligned}
\mathbb{E}\!\left[\Delta_t\,\mathbf{1}_{B_t}\,\middle|\,\theta^t\right]
&\le \alpha_t\,\mathbb{E}\!\left[\min\!\left\{\langle\nabla f(\theta^t),s_t\rangle,\,0\right\} \,\middle|\, \theta^t \right]
    + \tfrac{L_f}{2}\alpha_t^2\,\mathbb{E}\!\left[\|P s_t\|_2^2\right]\\
&= \alpha_t\,\mathbb{E}\!\left[\frac{\langle\nabla f(\theta^t),s_t\rangle - \big|\langle\nabla f(\theta^t),s_t\rangle\big|}{2} \,\middle|\, \theta^t \right]
    + \tfrac{L_f}{2}\,k\,\alpha_t^2\\
&= -\,\frac{\alpha_t}{2}\ \mathbb{E}\!\left[ \big|\langle\nabla f(\theta^t),s_t\rangle\big| \,\middle|\, \theta^t \right]
    + \tfrac{L_f}{2}\,k\,\alpha_t^2
\\
    &=\frac{-\alpha_t}{\sqrt{2\pi}}\left\|\nabla f\left(\theta^t\right)\right\|_2+ \tfrac{L_f}{2}\,k\,\alpha_t^2.
\end{aligned}
$$
\end{proof}
\begin{propbox}
\begin{lemma}\label{lem:err-reduction}
Assume that $\Delta_t=f(\theta^t+\alpha_t s_t)-f(\theta^t)\neq 0$.
Let $\Tilde{R}_{t,1},\dots,\Tilde{R}_{t,N}\in [-1,1]$ be the $N$ i.i.d.\ oracle outcomes of  \Cref{alg:NCRS-tie} queried on
$(\theta^t,\theta^t+\alpha_t s_t)$.
Define:
$$
X_t \ :=\ \Big\{\sum_{n=1}^N \Tilde{R}_{t,n}>0\Big\},
\,\,\, \text{and} \,\,\,
Y_t \ :=\ \big\{f(\theta^t)>f(\theta^t+\alpha_t s_t)\big\}.
$$
Then
$$
\mathbb{E}\!\left[
  \big|\,\mathbf{1}_{X_t} - \mathbf{1}_{Y_t}\,\big|
  \,\middle|\, \theta^t, s_t
\right]
\ \le\
\Pr\!\Bigg(\sum_{n=1}^N
\sign\!\big(f(\theta^t)-f(\theta^t+\alpha_t s_t)\big)\,\Tilde{R}_{t,n}\ \le\ 0
\ \Bigg|\ \theta^t,s_t\Bigg).
$$
\end{lemma}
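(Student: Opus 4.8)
The plan is to reduce the mismatch event $\{X_t\neq Y_t\}$ to a one-sided tail event for the confidence-weighted sum by a short case analysis on the sign of the true gap. Fix $(\theta^t,s_t)$ with $\Delta_t\neq 0$; under this conditioning $Y_t=\{\Delta_t<0\}$ is a \emph{deterministic} event, and all remaining randomness comes from the $N$ i.i.d.\ oracle draws $\tilde R_{t,1},\dots,\tilde R_{t,N}$. Write $S_t:=\sum_{n=1}^N \tilde R_{t,n}$ and $\sigma:=\sign\!\big(f(\theta^t)-f(\theta^t+\alpha_t s_t)\big)=-\sign(\Delta_t)\in\{-1,+1\}$, so that $\sum_{n=1}^N \sigma\,\tilde R_{t,n}=\sigma S_t$ and the right-hand side of the lemma is exactly $\Pr(\sigma S_t\le 0\mid \theta^t,s_t)$.

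First I would establish the pointwise bound $\big|\mathbf{1}_{X_t}-\mathbf{1}_{Y_t}\big|\le \mathbf{1}_{\{\sigma S_t\le 0\}}$, valid for every realization of the oracle draws. If $\Delta_t<0$ (so $\sigma=+1$ and $Y_t$ holds), then $\mathbf{1}_{Y_t}=1$ and, using the acceptance rule $X_t=\{S_t>0\}$ of Algorithm~\ref{alg:NCRS-tie}, $\big|\mathbf{1}_{X_t}-\mathbf{1}_{Y_t}\big|=1-\mathbf{1}_{X_t}=\mathbf{1}_{\{S_t\le 0\}}=\mathbf{1}_{\{\sigma S_t\le 0\}}$. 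If $\Delta_t>0$ (so $\sigma=-1$ and $Y_t$ fails), then $\mathbf{1}_{Y_t}=0$ and $\big|\mathbf{1}_{X_t}-\mathbf{1}_{Y_t}\big|=\mathbf{1}_{X_t}=\mathbf{1}_{\{S_t>0\}}\le \mathbf{1}_{\{S_t\ge 0\}}=\mathbf{1}_{\{\sigma S_t\le 0\}}$. In both cases the pointwise inequality holds; the only place it is not an equality is the tie $\{S_t=0\}$ in the second case, where the algorithm conservatively rejects.

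Taking the conditional expectation given $(\theta^t,s_t)$ of this pointwise inequality and rewriting $\{\sigma S_t\le 0\}$ as $\big\{\sum_{n=1}^N \sign(f(\theta^t)-f(\theta^t+\alpha_t s_t))\,\tilde R_{t,n}\le 0\big\}$ yields precisely the stated bound. There is essentially no analytic obstacle here: the lemma is a bookkeeping step that repackages the decision error, and the only things to watch are the strict-versus-nonstrict inequality at $S_t=0$ and keeping the sign convention $\sigma=-\sign(\Delta_t)$ straight, since the oracle's preference direction is opposite to the sign of $\Delta_t=f(\theta^t+\alpha_t s_t)-f(\theta^t)$. This reformulation is exactly what makes the subsequent concentration step (toward Lemma~\ref{lem:ranking_error}) clean, as the right-hand side is now a one-sided deviation probability for a sum of i.i.d.\ bounded variables whose mean is aligned with the correct ordering by Assumption~\ref{assump:signal-variance-score}.
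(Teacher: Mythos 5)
Your proof is correct and follows essentially the same route as the paper's: the same two-case analysis on the sign of $\Delta_t$, the same pointwise bound $\big|\mathbf{1}_{X_t}-\mathbf{1}_{Y_t}\big|\le \mathbf{1}_{\{\sigma S_t\le 0\}}$, and the same final step of taking conditional expectations. Your explicit remark about the tie $\{S_t=0\}$ being the only non-equality case is a small clarifying addition but does not change the argument.
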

\end{propbox}

\begin{proof}
Condition on $(\theta^t,s_t)$ and assume $\Delta_t\neq 0$.
We show that whenever $\mathbf 1_{X_t}\neq \mathbf 1_{Y_t}$, one must have
\[
\sum_{n=1}^N \sign\!\big(f(\theta^t)-f(\theta^t+\alpha_t s_t)\big)\,\Tilde{R}_{t,n}\ \le\ 0.
\]
There are two cases.

\emph{Case 1:} $f(\theta^t)>f(\theta^t+\alpha_t s_t)$.
Then $\sign(f(\theta^t)-f(\theta^t+\alpha_t s_t))=+1$ and an error
$\mathbf 1_{X_t}\neq \mathbf 1_{Y_t}$ can only happen if the algorithm rejects, i.e.,
$\sum_{n=1}^N \Tilde{R}_{t,n}\le 0$, which implies the desired inequality.

\emph{Case 2:} $f(\theta^t)<f(\theta^t+\alpha_t s_t)$.
Then $\sign(f(\theta^t)-f(\theta^t+\alpha_t s_t))=-1$ and an error
can only happen if the algorithm accepts, i.e., $\sum_{n=1}^N \Tilde{R}_{t,n}>0$.
Multiplying by $-1$ yields:
$$
\sum_{n=1}^N \sign\!\big(f(\theta^t)-f(\theta^t+\alpha_t s_t)\big)\,\Tilde{R}_{t,n}
= -\sum_{n=1}^N \Tilde{R}_{t,n}\ \le 0.
$$

Thus, in all cases, we have:
$$
\big|\,\mathbf 1_{X_t}-\mathbf 1_{Y_t}\big|
\ \le\
\mathbf 1_{\Big\{\sum_{n=1}^N \sign\!\big(f(\theta^t)-f(\theta^t+\alpha_t s_t)\big)\,\Tilde{R}_{t,n}\le 0\Big\}}.
$$
By taking conditional expectations given $(\theta^t,s_t)$, we obtain the desired result.
\end{proof}

\begin{propbox}
\begin{lemma}[Bernstein inequality]\label{lem:bernstein}
Let $Z_1,\dots,Z_N$ be independent random variables such that
$\E[Z_n]=0$ and $|Z_n|\le b$ almost surely for all $n$.
Let
$
V\ :=\ \sum_{n=1}^N \Var(Z_n).$
Then for all $t\ge 0$, we have:
$$
\Pr\!\left(\sum_{n=1}^N Z_n \le -t\right)
\ \le\
\exp\!\left(
-\frac{t^2}{2V+\frac{2}{3}bt}\right)
\,\,\,\text{and} \,\,\,
\Pr\!\left(\sum_{n=1}^N Z_n \ge t\right)
\ \le\
\exp\!\left(
-\frac{t^2}{2V+\frac{2}{3}bt}\right)
.
$$
\end{lemma}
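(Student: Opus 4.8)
The plan is to run the standard Chernoff argument, the one nonstandard ingredient being a Bernstein-type control of the moment generating function of each $Z_n$ that exploits both the boundedness $|Z_n|\le b$ and the variance. For the upper tail, fix $\lambda\in[0,3/b)$. By Markov's inequality applied to the nonnegative random variable $e^{\lambda\sum_{n}Z_n}$ together with independence of the $Z_n$,
\[
\Pr\!\Big(\sum_{n=1}^{N}Z_n\ge t\Big)\ \le\ e^{-\lambda t}\prod_{n=1}^{N}\E\big[e^{\lambda Z_n}\big].
\]

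The key step is to bound each factor. Expanding the exponential and using $\E[Z_n]=0$ gives $\E[e^{\lambda Z_n}]=1+\sum_{j\ge 2}\tfrac{\lambda^{j}}{j!}\E[Z_n^{j}]$. Since $|Z_n|\le b$ almost surely, $\E[Z_n^{j}]\le|\E[Z_n^{j}]|\le b^{\,j-2}\E[Z_n^{2}]=b^{\,j-2}\Var(Z_n)$ for every $j\ge 2$; combining this with the elementary inequality $j!\ge 2\cdot 3^{\,j-2}$ turns the tail of the series into a geometric one,
\[
\E\big[e^{\lambda Z_n}\big]\ \le\ 1+\Var(Z_n)\sum_{j\ge 2}\frac{\lambda^{2}(\lambda b)^{\,j-2}}{2\cdot 3^{\,j-2}}\ =\ 1+\frac{\lambda^{2}\Var(Z_n)/2}{1-\lambda b/3}\ \le\ \exp\!\Big(\frac{\lambda^{2}\Var(Z_n)/2}{1-\lambda b/3}\Big),
\]
where the last step uses $1+x\le e^{x}$. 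Multiplying over $n$ and recalling $V=\sum_{n}\Var(Z_n)$ yields $\prod_{n}\E[e^{\lambda Z_n}]\le\exp\!\big(\tfrac{\lambda^{2}V/2}{1-\lambda b/3}\big)$, so that $\Pr(\sum_{n}Z_n\ge t)\le\exp\!\big(-\lambda t+\tfrac{\lambda^{2}V/2}{1-\lambda b/3}\big)$.

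It remains to optimize the exponent over $\lambda$. I would plug in the near-optimal value $\lambda=t/(V+bt/3)$, which lies in $[0,3/b)$ (the degenerate case $V=0$ being trivial, since then $Z_n\equiv 0$); a direct computation gives $1-\lambda b/3=V/(V+bt/3)$, and substituting collapses the exponent to $-\tfrac{t^{2}}{V+bt/3}+\tfrac{t^{2}}{2(V+bt/3)}=-\tfrac{t^{2}}{2V+\frac23 bt}$, which is the claimed upper-tail bound. The lower-tail bound then follows by applying the upper-tail bound to $-Z_1,\dots,-Z_N$, which are again independent, mean zero, bounded by $b$ in absolute value, and have the same variances and hence the same $V$. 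The main point requiring care is the pair of elementary estimates $|\E[Z_n^{j}]|\le b^{\,j-2}\Var(Z_n)$ and $j!\ge 2\cdot 3^{\,j-2}$, which together produce exactly the constants $2$ and $\tfrac23$ in the denominator; the remaining algebra in the $\lambda$-optimization is routine bookkeeping.
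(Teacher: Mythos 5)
Your proof is correct: the MGF bound via $|\E[Z_n^j]|\le b^{j-2}\Var(Z_n)$ and $j!\ge 2\cdot 3^{j-2}$, the choice $\lambda=t/(V+bt/3)$, and the resulting exponent $-t^2/(2V+\tfrac23 bt)$ all check out, and the $V=0$ and lower-tail reductions are handled properly. The paper states this lemma without proof, treating it as the classical Bernstein inequality, so your Chernoff-type derivation is simply the standard argument filled in; there is nothing to reconcile with the paper's (nonexistent) proof.
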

\end{propbox}

\begin{propbox}
\begin{lemma}[\Cref{lem:ranking_error}]
Assume that Assumption~\ref{assump:signal-variance-score} holds and condition on $(\theta^t,s_t)$ with $\Delta_t\neq 0$.
Under Algorithm~\ref{alg:NCRS-tie}, we have:
$$
\mathbb{E}\!\left[
  \big|\,\mathbf{1}_{X_t} - \mathbf{1}_{Y_t}\,\big|
  \,\middle|\, \theta^t, s_t
\right]
\;\le\;
 \exp\!\left(-\, \frac{N \rho(|\Delta_t|)}{2C+\frac{4}{3}}\right).
$$

\end{lemma}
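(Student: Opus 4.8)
The plan is to reduce the error indicator to a lower-tail event for a sum of conditionally i.i.d.\ bounded random variables and then invoke the Bernstein inequality of Lemma~\ref{lem:bernstein}. First I would apply Lemma~\ref{lem:err-reduction}, which (conditionally on $(\theta^t,s_t)$ with $\Delta_t\neq 0$) gives
$$\mathbb{E}\!\left[\big|\mathbf 1_{X_t}-\mathbf 1_{Y_t}\big|\,\middle|\,\theta^t,s_t\right]\ \le\ \Pr\!\Big(\textstyle\sum_{n=1}^N \sigma\,\tilde R_{t,n}\le 0\ \Big|\ \theta^t,s_t\Big),\qquad \sigma:=\sign\!\big(f(\theta^t)-f(\theta^t+\alpha_t s_t)\big).$$
Taking the queried pair $x=\theta^t$, $y=\theta^t+\alpha_t s_t$, the function gap is $\Delta(x,y)=f(x)-f(y)=-\Delta_t$, so $|\Delta(x,y)|=|\Delta_t|$ and $\sigma=\sign(\Delta(x,y))$. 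Hence the two estimates of Assumption~\ref{assump:signal-variance-score} applied to this pair read $\mathbb{E}[\sigma\tilde R_{t,n}\mid\theta^t,s_t]\ge\rho(|\Delta_t|)$ and $\mathbb{E}[\tilde R_{t,n}^2\mid\theta^t,s_t]\le C\rho(|\Delta_t|)$. If $\rho(|\Delta_t|)=0$ the claimed bound is trivial since its right-hand side equals $1$, so I may assume $\rho(|\Delta_t|)>0$.

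Next I would center the summands. Set $W_n:=\sigma\tilde R_{t,n}$, $\mu:=\mathbb{E}[W_n\mid\theta^t,s_t]$, and $Z_n:=W_n-\mu$. Conditionally on $(\theta^t,s_t)$ the $Z_n$ are i.i.d.\ and mean zero; since $\tilde R_{t,n}\in[-1,1]$ we have $|W_n|\le 1$, and $\mu\in[\rho(|\Delta_t|),1]\subseteq[0,1]$ (the upper bound follows from $|W_n|\le1$), so $|Z_n|\le|W_n|+|\mu|\le 2$. Moreover $\Var(Z_n)=\mathbb{E}[Z_n^2]\le\mathbb{E}[W_n^2]=\mathbb{E}[\tilde R_{t,n}^2\mid\theta^t,s_t]\le C\rho(|\Delta_t|)$, hence $V:=\sum_{n=1}^N\Var(Z_n)\le NC\rho(|\Delta_t|)$. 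Because $\mu\ge\rho(|\Delta_t|)>0$, the event $\{\sum_n W_n\le 0\}=\{\sum_n Z_n\le -N\mu\}$ is contained in $\{\sum_n Z_n\le -N\rho(|\Delta_t|)\}$.

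Finally I would apply Lemma~\ref{lem:bernstein} to $Z_1,\dots,Z_N$ with deviation $t=N\rho(|\Delta_t|)$, boundedness constant $b=2$, and $V\le NC\rho(|\Delta_t|)$:
$$\Pr\!\Big(\textstyle\sum_{n=1}^N Z_n\le -N\rho(|\Delta_t|)\Big)\ \le\ \exp\!\left(-\frac{N^2\rho(|\Delta_t|)^2}{2NC\rho(|\Delta_t|)+\tfrac{2}{3}\cdot2\cdot N\rho(|\Delta_t|)}\right)\ =\ \exp\!\left(-\frac{N\rho(|\Delta_t|)}{2C+\tfrac{4}{3}}\right),$$
and chaining this with the inclusion of events and the bound from Lemma~\ref{lem:err-reduction} yields the lemma. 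I do not expect a serious obstacle here: the only points requiring care are the sign bookkeeping $\Delta(x,y)=-\Delta_t$ when matching the pair $(\theta^t,\theta^t+\alpha_t s_t)$ to Assumption~\ref{assump:signal-variance-score}, the choice $b=2$ that makes $\tfrac{2}{3}b=\tfrac{4}{3}$ hit exactly the stated denominator, passing from the deviation $N\mu$ to $N\rho(|\Delta_t|)$ via monotonicity of the tail bound, and the degenerate case $\rho(|\Delta_t|)=0$.
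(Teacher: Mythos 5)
Your proposal is correct and follows essentially the same route as the paper: reduce via Lemma~\ref{lem:err-reduction} to a lower-tail event for the aligned sums, center, bound the variance by $C\rho(|\Delta_t|)$ and the range by $2$, and apply Bernstein (Lemma~\ref{lem:bernstein}). The only cosmetic difference is that you weaken the deviation threshold from $N\mu$ to $N\rho(|\Delta_t|)$ via event inclusion \emph{before} invoking Bernstein, whereas the paper applies Bernstein at $t=N\mu$ and then uses monotonicity of $s\mapsto s^2/(2C\rho_t+\tfrac43 s)$ to replace $\mu$ by $\rho_t$; both yield the identical exponent, and your explicit handling of the degenerate case $\rho(|\Delta_t|)=0$ is a welcome addition.
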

\end{propbox}
\begin{proof}

Define the ``aligned'' random variables:
$$
Z_{t,n}\;:=\;\operatorname{sign}\!\big(f(\theta^t)-f(\theta^t+\alpha_t s_t)\big)\,\Tilde{R}_{t,n},
$$
and let $\mu:=\E[Z_{t,1}\mid \theta^t, s_t]$. By Assumption~\ref{assump:signal-variance-score}, we have:
$$
\mu \;\ge\; \rho(|\Delta_t|)\, \, \, \text{ and} \,\,\,
\E[Z_{t,1}^2\mid \theta^t, s_t]\;=\;\E[\Tilde{R}(\theta^t,\theta^t+\alpha_t s_t)^2\mid \theta^t, s_t]\;\le\; C\,\rho(|\Delta_t|).
$$
Moreover, since $Z_{t,n}\in[-1,1]$ we have $|Z_{t,n}-\mu|\le 2$.

By \Cref{lem:err-reduction}, we have:
\begin{align*}
\mathbb{E}\!\left[
  \big|\,\mathbf{1}_{X_t} - \mathbf{1}_{Y_t}\,\big|
  \,\middle|\, \theta^t, s_t
\right]
&\le\
\Pr\!\Bigg(\sum_{n=1}^N
Z_{t,n}\le\ 0
\ \Bigg|\ \theta^t,s_t\Bigg)\\
&=\Pr\!\Bigg(\sum_{n=1}^N (
Z_{t,n}-\mu) \le\ -N \mu
\ \Bigg|\ \theta^t,s_t\Bigg).
\end{align*}
Using conditional independence of the $Z_{t,n}$ given $(\theta^t,\theta^t+\alpha_t s_t)$ and Bernstein's inequality
for bounded variables $Z_{t,n}-\mu$, we obtain:
$$
\Pr\!\left(\sum_{n=1}^N (Z_{t,n}-\mu)\le -N\mu \,\middle|\, \theta^t,s_t\right)
\le
\exp\!\left(
-\frac{(N\mu)^2}{2N\,\E[(Z_{t,1}-\mu)^2\mid \theta^t,s_t]+\frac{2}{3}\cdot 2\cdot N\mu}
\right).
$$
Since $\E[(Z_{t,1}-\mu)^2\mid \theta^t, s_t]\le \E[Z_{t,1}^2\mid \theta^t, s_t]\le C\rho(|\Delta_t|)$, we obtain:
$$
\mathbb{E}\!\left[
  \big|\,\mathbf{1}_{X_t} - \mathbf{1}_{Y_t}\,\big|
  \,\middle|\, \theta^t, s_t
\right]
\le
\exp\!\left(
-\frac{N \mu^2}{2C\rho(|\Delta_t|)+ \frac{4}{3}\mu}
\right).
$$

By denoting $\rho_t:=\rho(|\Delta_t|)$.
We have:
$
\mathbb{E}\!\left[
  \big|\,\mathbf{1}_{X_t} - \mathbf{1}_{Y_t}\,\big|
  \,\middle|\, \theta^t, s_t
\right]
 \le\
\exp\!\left(
-\frac{N\mu^2}{2C\,\rho_t+\frac{4}{3}\mu}\right).
$
Consider the function $\phi(s):=\frac{s^2}{2C\,\rho_t+\frac{4}{3}s}$ for $s>0$.
A direct derivative computation shows that $\phi$ is increasing on $(0,\infty)$:
$$\forall s >0, \,\,
\phi'(s)=\frac{2(2C\rho_t)s+\frac{4}{3}s^2}{\bigl(2C\rho_t+\frac{4}{3}s\bigr)^2}\ >\ 0.
$$
Since $\mu\ge \,\rho_t$, it follows that:

$$
\frac{\mu^2}{2C\,\rho_t+\frac{4}{3}\mu}
\ \ge\
\frac{\rho_t^2}{2C\,\rho_t+\frac{4}{3}\,\rho_t}
=
\rho_t\,\frac{1}{2C+\frac{4}{3}}.
$$

Therefore:
$$
\E\!\left[\big|\,\mathbf 1_{X_t}-\mathbf 1_{Y_t}\,\big| \,\middle|\, \theta^t,s_t\right]
\le
\exp\!\left(
-\frac{N\,\rho(|\Delta_t|)}{2C+\frac{4}{3}}
\right).
$$

\end{proof}

\begin{propbox}
\begin{lemma}[\Cref{lem:rank-penalty-tie}]
Assume that $f(x)=g(Ax)$ for some matrix $A\in\R^{k\times d}$ with full row rank $\rank(A)=k\le d$
and some function $g:\R^k\to\R$, and that $f$ is $L_f$-smooth.
Assume that Assumption~\ref{assump:signal-variance-score} holds. Under Algorithm~\ref{alg:NCRS-tie}, for all $t\ge1$, we have:
\begin{align*}
\Big|\E\!\big[\Delta_t(\mathbf 1_{X_t}-\mathbf 1_{Y_t})\mid \theta^t\big]\Big|
&\ \le\
\gamma_{N,r}\Bigg(
\alpha_t\sqrt{\frac{2}{\pi}}\,\|\nabla f(\theta^t)\|_2
+\frac{L_f}{2}\,k\,\alpha_t^2
\Bigg)
+\frac{2 C+\frac{4}{3}}{e\, c N},
\end{align*}
where
$
\gamma_{N,r}\ :=\exp\left(-\, \frac{N \rho(r)}{2C+\frac{4}{3}}\right).
$
\end{lemma}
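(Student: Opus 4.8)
The plan is to transport the single-pair error bound of \Cref{lem:ranking_error} through the weight $|\Delta_t|$, integrate over the Gaussian direction $s_t$ after splitting on the size of the gap $|\Delta_t|$, and finally control $\E[|\Delta_t|\mid\theta^t]$ by $L_f$-smoothness together with the ridge structure $f(x)=g(Ax)$.

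First I would condition on $(\theta^t,s_t)$ with $\Delta_t\neq0$. Since $|\Delta_t(\mathbf 1_{X_t}-\mathbf 1_{Y_t})|=|\Delta_t|\,|\mathbf 1_{X_t}-\mathbf 1_{Y_t}|$ and $|\Delta_t|$ is deterministic given $(\theta^t,s_t)$, \Cref{lem:ranking_error} yields
\[
\E\!\big[\,|\Delta_t(\mathbf 1_{X_t}-\mathbf 1_{Y_t})|\,\big|\,\theta^t,s_t\big]\ \le\ |\Delta_t|\exp\!\Big(-\tfrac{N\rho(|\Delta_t|)}{2C+\tfrac{4}{3}}\Big),
\]
and both sides vanish when $\Delta_t=0$. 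Then I would split into the two gap regimes. On $\{|\Delta_t|>r\}$, monotonicity of $\rho$ gives $\rho(|\Delta_t|)\ge\rho(r)$, so the right-hand side is at most $|\Delta_t|\,\gamma_{N,r}$. On $\{|\Delta_t|\le r\}$, \Cref{assump:signal-variance-score} gives $\rho(|\Delta_t|)\ge c|\Delta_t|$, so the right-hand side is at most $\sup_{u\ge0}u\,e^{-\lambda u}$ with $\lambda:=cN/(2C+\tfrac{4}{3})$; the elementary maximization (the max is at $u^\star=1/\lambda$) gives $\sup_{u\ge0}u\,e^{-\lambda u}=1/(e\lambda)=(2C+\tfrac{4}{3})/(ecN)$. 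In both regimes therefore
\[
\E\!\big[\,|\Delta_t(\mathbf 1_{X_t}-\mathbf 1_{Y_t})|\,\big|\,\theta^t,s_t\big]\ \le\ |\Delta_t|\,\gamma_{N,r}\ +\ \frac{2C+\tfrac{4}{3}}{e\,c\,N}.
\]

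Next I would take $\E[\cdot\mid\theta^t]$ (tower property) and apply the triangle inequality, which gives $\mathcal E_t\le\gamma_{N,r}\,\E[|\Delta_t|\mid\theta^t]+(2C+\tfrac{4}{3})/(ecN)$. It remains to bound $\E[|\Delta_t|\mid\theta^t]$: exactly as in the proof of \Cref{lem:error1bound}, the ridge identity $f(\theta^t+\alpha_t s_t)=f(\theta^t+\alpha_t Ps_t)$ and $L_f$-smoothness give $|\Delta_t|\le\alpha_t|\langle\nabla f(\theta^t),Ps_t\rangle|+\tfrac{L_f}{2}\alpha_t^2\|Ps_t\|_2^2$, and since $\nabla f(\theta^t)\in\range(A^\top)$ and $P$ is an orthogonal projector, $\langle\nabla f(\theta^t),Ps_t\rangle=\langle\nabla f(\theta^t),s_t\rangle$. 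Taking expectations over $s_t\sim\mathcal N(0,I_d)$ with $\E|\langle\nabla f(\theta^t),s_t\rangle|=\sqrt{2/\pi}\,\|\nabla f(\theta^t)\|_2$ and $\E\|Ps_t\|_2^2=\tr(P)=k$ yields $\E[|\Delta_t|\mid\theta^t]\le\alpha_t\sqrt{2/\pi}\,\|\nabla f(\theta^t)\|_2+\tfrac{L_f}{2}k\,\alpha_t^2$, and substituting this into the previous bound gives the claimed inequality.

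The main obstacle---indeed the only step that is not bookkeeping---is the small-gap regime $|\Delta_t|\le r$: there the exponent decays only linearly in $|\Delta_t|$, too slowly to absorb the linear weight $|\Delta_t|$, so one must trade the two off through the lower bound $\rho(t)\ge ct$ and the maximization of $u\mapsto u\,e^{-\lambda u}$. This trade-off is precisely what turns the penalty from exponentially small into $\Theta(1/N)$, and hence is the source of the extra $k/\varepsilon^2$ factor in the final comparison complexity.
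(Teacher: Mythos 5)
Your proposal is correct and follows essentially the same route as the paper's proof: transport the error bound of \Cref{lem:ranking_error} through the weight $|\Delta_t|$, split at $|\Delta_t|=r$, use monotonicity of $\rho$ in the large-gap regime and the bound $\sup_{u\ge 0}u\,e^{-\lambda u}=1/(e\lambda)$ with $\rho(t)\ge ct$ in the small-gap regime, and control $\E[|\Delta_t|\mid\theta^t]$ via the ridge structure and $L_f$-smoothness. The only (harmless) presentational difference is that you combine the two regimes into a single pointwise bound before integrating, whereas the paper sums the two indicator-weighted expectations.
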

\end{propbox}

\begin{proof}[Proof of \Cref{lem:rank-penalty-tie}]
By \Cref{lem:ranking_error}, for $\Delta_t\neq 0$, we have:
$$
\E\!\big[\,|\mathbf 1_{X_t}-\mathbf 1_{Y_t}|\,\mid \theta^t,s_t\big]
\ \le\ e^{-\, \frac{N \rho(|\Delta_t|)}{2C+\frac{4}{3}}},
$$
hence, we have:
$$
\Big|\E\!\big[\Delta_t(\mathbf 1_{X_t}-\mathbf 1_{Y_t})\mid \theta^t\big]\Big|
\ \le\
\E\!\Big[\,|\Delta_t|\,e^{-\, \frac{N \rho(|\Delta_t|)}{2C+\frac{4}{3}}}\ \Big|\ \theta^t\Big].
$$
We split the expectation into the two regimes $|\Delta_t|>r$ and $|\Delta_t|\le r$.

\medskip
\noindent\emph{Large-gap regime $|\Delta_t|>r$.}
Since $\rho$ is nondecreasing, $\rho(|\Delta_t|)\ge \rho(r)=m_r$, and thus
$e^{-\, \frac{N \rho(|\Delta_t|)}{2C+\frac{4}{3}}}\le e^{-\, \frac{N \rho(r)}{2C+\frac{4}{3}}}$. Therefore,
\[
\E\!\Big[\,|\Delta_t|\,e^{-N\kappa_{p,C}\rho(|\Delta_t|)}\,\mathbf 1_{\{|\Delta_t|>r\}}
\ \Big|\ \theta^t\Big]
\ \le\ e^{-\, \frac{N \rho(r)}{2C+\frac{4}{3}}} \,\E\!\big[|\Delta_t|\mid \theta^t\big].
\]
Moreover, using the ridge structure $f(x)=g(Ax)$ with projector
$P:=A^\top(AA^\top)^{-1}A$, we have
$f(\theta^t+\alpha_t s_t)=f(\theta^t+\alpha_t P s_t)$ and $\nabla f(\theta^t)\in \mathrm{range}(A^\top)$, hence, by $L_f$-smoothness of $f,$ we have:
$$
|\Delta_t|
= \big|f(\theta^t+\alpha_t P s_t)-f(\theta^t)\big|
\le
\alpha_t\,\big|\langle \nabla f(\theta^t), s_t\rangle\big|
+\frac{L_f}{2}\alpha_t^2\|Ps_t\|_2^2.
$$
It follows that:
$$
\E\!\big[|\Delta_t|\mid \theta^t\big]
\le
\alpha_t\sqrt{\frac{2}{\pi}}\,\|\nabla f(\theta^t)\|_2
+\frac{L_f}{2}\alpha_t^2\,\E\|Ps_t\|_2^2
=
\alpha_t\sqrt{\frac{2}{\pi}}\,\|\nabla f(\theta^t)\|_2
+\frac{L_f}{2}\,k\,\alpha_t^2.
$$

\noindent\emph{Small-gap regime $|\Delta_t|\le r$.}
On this event we have $\rho(|\Delta_t|)\ge c\,|\Delta_t|$, hence:
$$
|\Delta_t|\,e^{-\, \frac{N \rho(|\Delta_t|)}{2C+\frac{4}{3}}}
\ \le\
|\Delta_t|\,e^{-\, \frac{c\, N |\Delta_t|}{2C+\frac{4}{3}}}.
$$
We can verify that $\sup_{x\ge 0} x e^{-a x}=\frac{1}{e \,a}$ (attained at $x=\frac{1}{a}$), with
$a:=\frac{c \,N}{2C+\frac{4}{3}}$, we obtain:
$$
|\Delta_t|\,e^{-\, \frac{c\, N |\Delta_t|}{2C+\frac{4}{3}}}
\ \le\ \frac{2 C+\frac{4}{3}}{e\, c N},
$$
and therefore
$
\E\!\Big[\,|\Delta_t|\,e^{-\, \frac{c\, N |\Delta_t|}{2C+\frac{4}{3}}} \,\mathbf 1_{\{|\Delta_t|\le r\}}
\ \Big|\ \theta^t\Big]
\ \le\ \frac{2 C+\frac{4}{3}}{e\, c N}.
$ Combining the two regimes, we obtain:
$$
\Big|\E\!\big[\Delta_t(\mathbf 1_{X_t}-\mathbf 1_{Y_t})\mid \theta^t\big]\Big|
\ \le\
\gamma_{N,r}\Bigg(
\alpha_t\sqrt{\frac{2}{\pi}}\,\|\nabla f(\theta^t)\|_2
+\frac{L_f}{2}\,k\,\alpha_t^2
\Bigg)
+\frac{2 C+\frac{4}{3}}{e\, c N},
$$
where $\gamma_{N,r}:=e^{-\, \frac{N \rho(r)}{2C+\frac{4}{3}}}.$
\end{proof}

\begin{propbox}
\begin{proposition}[\Cref{lem:descent-tie}]
Assume that $f(x)=g(Ax)$ for some matrix $A\in\R^{k\times d}$ with full row rank $\rank(A)=k\le d$
and some function $g:\R^k\to\R$, and that $f$ is $L_f$-smooth.
Assume that Assumption~\ref{assump:signal-variance-score} holds. Under Algorithm~\ref{alg:NCRS-tie}, for all $t\ge 1$, we have:
\begin{align*}
\E\!\left[f(\theta^{t+1})\,\middle|\,\theta^t\right]
\ \le\ &\
f(\theta^t)
-\frac{\alpha_t}{\sqrt{2\pi}}\,(1-\gamma_{N,r})\,\|\nabla f(\theta^t)\|_2
+\frac{L_f}{2}\,k\,(1+\gamma_{N,r})\,\alpha_t^2
+\frac{2 C+\frac{4}{3}}{e\, c N},
\end{align*}
where $\gamma_{N,r}:=e^{-\, \frac{N \rho(r)}{2C+\frac{4}{3}}}.$
\end{proposition}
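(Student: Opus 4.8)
The plan is to derive the one-step descent by merging the accept-or-stay identity with \Cref{lem:error1bound} and \Cref{lem:rank-penalty-tie}. Since Algorithm~\ref{alg:NCRS-tie} either moves to the candidate or stays put, we have the exact identity $f(\theta^{t+1})=f(\theta^t)+\Delta_t\mathbf 1_{X_t}$ with $X_t=\{S_t>0\}$ the acceptance event. I would then use the decomposition already highlighted in the text,
\[
\Delta_t\mathbf 1_{X_t}\;=\;\Delta_t\mathbf 1_{Y_t}\;+\;\Delta_t\bigl(\mathbf 1_{X_t}-\mathbf 1_{Y_t}\bigr),\qquad Y_t=\{\Delta_t<0\},
\]
and take the conditional expectation $\E[\,\cdot\mid\theta^t]$ of both sides, reducing the claim to bounding the two resulting terms.

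For the true-improvement term $\E[\Delta_t\mathbf 1_{Y_t}\mid\theta^t]$ I would apply \Cref{lem:error1bound} verbatim: this is exactly the increment a noiseless comparator would produce, and the lemma already incorporates the ridge structure $f(x)=g(Ax)$ (so the decision depends only on $Ps_t$ and $\E\|Ps_t\|_2^2=k$), $L_f$-smoothness, and Gaussian symmetry, yielding the leading drift $-\tfrac{\alpha_t}{\sqrt{2\pi}}\|\nabla f(\theta^t)\|_2$ plus the curvature term $\tfrac{L_f}{2}k\alpha_t^2$. For the ranking-error term, since $\Delta_t(\mathbf 1_{X_t}-\mathbf 1_{Y_t})$ may have either sign, I would bound it below by $-\bigl|\E[\Delta_t(\mathbf 1_{X_t}-\mathbf 1_{Y_t})\mid\theta^t]\bigr|=-\mathcal E_t$ and invoke \Cref{lem:rank-penalty-tie}, which controls $\mathcal E_t$ by $\gamma_{N,r}$ times the same gradient-plus-curvature expression plus the additive penalty $\tfrac{2C+\tfrac43}{ecN}$. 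Substituting both estimates into the conditional-expectation identity and collecting the $\|\nabla f(\theta^t)\|_2$ terms and the $\alpha_t^2$ terms yields the asserted inequality, with a gradient coefficient $\tfrac{\alpha_t}{\sqrt{2\pi}}(1-\gamma_{N,r})$ and a curvature coefficient $\tfrac{L_f}{2}k(1+\gamma_{N,r})\alpha_t^2$.

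The real work lies upstream, in \Cref{lem:rank-penalty-tie} and the \Cref{lem:ranking_error} it rests on, so if I were proving those from scratch the crux would be twofold. First, bounding the probability of a wrong aggregate vote on a fixed pair: writing the aligned scores $Z_{t,n}=\sign\bigl(f(\theta^t)-f(\theta^t+\alpha_t s_t)\bigr)\tilde R_{t,n}$, \Cref{assump:signal-variance-score} gives conditional mean at least $\rho(|\Delta_t|)$ and second moment at most $C\rho(|\Delta_t|)$, so Bernstein's inequality applied to $\sum_n(Z_{t,n}-\mu)$ gives an error probability $\le\exp\!\bigl(-N\rho(|\Delta_t|)/(2C+\tfrac43)\bigr)$, after using monotonicity of $s\mapsto s^2/(2C\rho_t+\tfrac43 s)$ to replace $\mu$ by its lower bound $\rho_t$. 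Second, turning this pointwise bound into a bound on $\E[\Delta_t(\mathbf 1_{X_t}-\mathbf 1_{Y_t})\mid\theta^t]$ requires splitting on $\{|\Delta_t|>r\}$, where $\rho(|\Delta_t|)\ge\rho(r)$ collapses the exponential to the constant $\gamma_{N,r}$ multiplying $\E[|\Delta_t|\mid\theta^t]$ (itself bounded by $\sqrt{2/\pi}\,\alpha_t\|\nabla f(\theta^t)\|_2+\tfrac{L_f}{2}k\alpha_t^2$ via smoothness), versus $\{|\Delta_t|\le r\}$, where $\rho(t)\ge ct$ and $\sup_{x\ge0}xe^{-ax}=1/(ea)$ with $a=cN/(2C+\tfrac43)$ produce the $1/N$ term. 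Given those lemmas, the present proposition is essentially bookkeeping; the subtlety worth flagging is that the ranking-error bound must be proportional to $\|\nabla f(\theta^t)\|_2$ rather than a fixed constant, since that is precisely what lets the averaged descent inequality close into a clean rate once $N$ is chosen large enough to push $\gamma_{N,r}$ below $1$.
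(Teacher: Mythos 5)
Your proposal is correct and follows essentially the same route as the paper: the accept-or-stay identity $f(\theta^{t+1})=f(\theta^t)+\Delta_t\mathbf 1_{X_t}$, the decomposition into the true-improvement term handled by \Cref{lem:error1bound} and the ranking-error term handled by \Cref{lem:rank-penalty-tie}, and then collecting coefficients. The only wording quibble is that for the stated upper bound on $\E[f(\theta^{t+1})\mid\theta^t]$ you need the upper bound $\E[\Delta_t(\mathbf 1_{X_t}-\mathbf 1_{Y_t})\mid\theta^t]\le \mathcal E_t$ rather than the lower bound $-\mathcal E_t$, but both follow trivially from the same absolute-value control, so this is not a gap.
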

\end{propbox}

\begin{proof}[Proof of \Cref{lem:descent-tie}]
Condition on $\theta^t$. By the accept--or--stay update rule,
$$
f(\theta^{t+1}) \;=\; f(\theta^t)+\Delta_t\,\mathbf 1_{X_t},
\,\,\text{where}\,\,
\Delta_t:=f(\theta^t+\alpha_t s_t)-f(\theta^t).
$$
Then, we obtain:
\begin{align*}
\E\!\left[f(\theta^{t+1})\,\middle|\,\theta^t\right]
&=
f(\theta^t)
+\E\!\left[\Delta_t\mathbf 1_{Y_t}\,\middle|\,\theta^t\right]
+\E\!\left[\Delta_t(\mathbf 1_{X_t}-\mathbf 1_{Y_t})\,\middle|\,\theta^t\right].
\end{align*}
We bound the two terms separately.

\noindent\textbf{True-improvement term.}
By Lemma~\ref{lem:error1bound}, we have:
$$
\E\!\left[\Delta_t\mathbf 1_{Y_t}\,\middle|\,\theta^t\right]
\;\le\;
-\,\frac{\alpha_t}{\sqrt{2\pi}}\;\|\nabla f(\theta^t)\|_2
\;+\;\frac{L_f}{2}\,k\,\alpha_t^2.
$$

\medskip
\noindent\textbf{Ranking-error term.}
By Lemma~\ref{lem:rank-penalty-tie}, we have:

$$
\E\!\left[\Delta_t(\mathbf 1_{A_t}-\mathbf 1_{B_t})\,\middle|\,\theta^t\right]
\ \le\
\gamma_{N,r} \Bigg(
\alpha_t\sqrt{\frac{2}{\pi}}\,\|\nabla f(\theta^t)\|_2
+\frac{L_f}{2}\,k\,\alpha_t^2
\Bigg)
+\frac{2 C+\frac{4}{3}}{e\, c N}.
$$

It follows that:
\begin{align*}
\E\!\left[f(\theta^{t+1})\,\middle|\,\theta^t\right]
\ \le\ &\
f(\theta^t)
-\frac{\alpha_t}{\sqrt{2\pi}}\|\nabla f(\theta^t)\|_2
+\frac{L_f}{2}\,k\,\alpha_t^2
+\gamma_{N\,r}\Bigg(
\alpha_t\sqrt{\frac{2}{\pi}}\,\|\nabla f(\theta^t)\|_2
+\frac{L_f}{2}\,k\,\alpha_t^2
\Bigg)
+\frac{2 C+\frac{4}{3}}{e\, c N}
\end{align*}
Therefore:
$$
\E\!\left[f(\theta^{t+1})\right]
\ \le\
\E\!\left[f(\theta^t)\right]
-\frac{\alpha_t}{\sqrt{2\pi}}\,(1-\gamma_{N,r})\,\E\,\!\|\nabla f(\theta^t)\|_2
+\frac{L_f}{2}\,k\,(1+\gamma_{N,r})\,\alpha_t^2
+\frac{2 C+\frac{4}{3}}{e\, c N}.
$$
\end{proof}

\section{Convergence Analysis of Two-Point Gradient Estimation Method for Ridge Objectives $f(x)=g(Ax)$ }
\label{ZO2pt_subsec}
\subsection{Results}

\textbf{Setting.}
Throughout this section we keep the same ridge structure
$
f(x)=g(Ax),
$
with $A\in\R^{k\times d}$ and $\rank(A)=k\le d$, and assume that $f$ is $L_f$-smooth on $\R^d$.

\textbf{Two-point gradient estimation method.}
As a standard baseline for gradient estimation methods in zeroth-order optimization, we consider the classical RSGF \citep{ghadimi2013stochastic}
algorithm, which constructs a stochastic
estimate of the gradient. For completeness, we recall the procedure in Algorithm~\ref{alg:zo2pt}.

\begin{algorithm}[H]
\caption{Two-point gradient estimation method}
\label{alg:zo2pt}
\begin{algorithmic}[1]
\STATE \textbf{Input:} initial point $\theta^1\in\R^d$, smoothing radius $\mu>0$, stepsize $\alpha>0$
\FOR{$t=1,2,\dots$}
    \STATE Sample a random direction $s_t\sim\mathcal N(0,I_d)$
    \STATE Update:
    \[
    \theta^{t+1}= \theta^t-\alpha \frac{h(\theta^t+\mu s_t)-h(\theta^t)}{\mu}\,s_t
    \]
\ENDFOR
\end{algorithmic}
\end{algorithm}

\textbf{Two-point gradient estimation method automatically adapts to the intrinsic subspace.}
In the ridge model $h(x)=g(Ax)$ with $P:=A^\top(AA^\top)^{-1}A$, we have $h(x+ \eta s)=h(x+\eta Ps)$ for all $x,s \in \mathbb{R}^d$ and $\eta>0$.
Hence the two-point difference depends only on the projected direction:
\[
\delta_t:=\frac{h(\theta^t+\mu s_t)-h(\theta^t)}{\mu}
=\frac{h(\theta^t+\mu P s_t)-h(\theta^t)}{\mu}.
\]
Although Algorithm~\ref{alg:zo2pt} updates with $s_t$,
\[
\theta^{t+1}=\theta^t-\alpha\,\delta_t\,s_t
=\theta^t-\alpha\,\delta_t\,P s_t-\alpha\,\delta_t\,(I-P)s_t,
\]
the last term lies in $\ker(A)$ and is invisible to $h$ since $A(I-P)=0$. Therefore
$
h(\theta^{t+1})=h\!\bigl(\theta^t-\alpha\,\delta_t\,P s_t\bigr).$

Equivalently, for optimizing $f$ we may replace $s_t$ by the effective direction $u_t:=Ps_t\in\range(A^\top)$.
The method therefore behaves as a two-point gradient approximation scheme restricted to the $k$-dimensional active subspace, with
$u_t\sim\mathcal N(0,P)$.
Since $P$ has rank $k$, the Gaussian moments of $u_t$ that appear in the analysis scale with $k$.
Moreover, the ridge structure implies $\nabla h(x)\in\range(A^\top)$ and thus $P\nabla h(x)=\nabla h(x)$.
Together, these facts explain why the smoothness and variance terms in the descent inequality depend on $k$.

Similarly to Lemma \ref{lem:NCRS-composite-one-step-k}, we establish an analogous one-step descent inequality tailored to \Cref{alg:zo2pt}.
\begin{propbox}
\begin{lemma}
\label{lem:NCRS-one-step-nonsmooth}
Assume the observed objective is of the form  $h(x)=g(Ax)$ for some matrix $A\in\R^{k\times d}$
with full row rank $\rank(A)=k\le d$, and some function $g:\R^k\to\R$ such that
$h$ is $L_h$-smooth on $\R^d$. 
By following \Cref{alg:zo2pt} with step size $\alpha\le \frac{1}{4L_h(k+2)}$, we have for all $t\ge 1$:
\[
\frac{\alpha}{4}\,\E\!\bigl[\|\nabla h(\theta^t)\|_2^2\bigr]
\;\le\;
\E\!\bigl[h(\theta^t)-h(\theta^{t+1})\bigr]
+\frac{L_h\,k\,\mu^2}{16}.
\]
\end{lemma}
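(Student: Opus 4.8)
The plan is to run the classical random-gradient-free descent analysis of Nesterov--Spokoiny / Ghadimi--Lan, but carried out \emph{inside the active subspace} $V:=\range(A^\top)$; this substitution of $V$ for $\R^d$ is exactly what replaces the ambient $d$ by the intrinsic $k$. Fix $t$ and condition on $\theta^t$. Write $g:=\nabla h(\theta^t)$, $u:=Ps_t$, and $\delta_t:=\mu^{-1}\bigl(h(\theta^t+\mu u)-h(\theta^t)\bigr)$. I will use as black boxes the two facts already recorded above the lemma: (i) $h(\theta^t+\mu s_t)=h(\theta^t+\mu u)$ and $h(\theta^{t+1})=h(\theta^t-\alpha\delta_t u)$, since the $\ker(A)$-component of $s_t$ is invisible to $h$; and (ii) $\nabla h(x)\in V$, so $Pg=g$. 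The crucial observation is that $u=Ps_t$ is a standard Gaussian supported on the $k$-dimensional subspace $V$, so $\|u\|_2^2\sim\chi^2_k$; consequently every Gaussian moment appearing below ($\E\|u\|_2^2=k$, $\E[\langle g,u\rangle^2\|u\|_2^2]=(k+2)\|g\|_2^2$, $\E\|u\|_2^6=k(k+2)(k+4)$, $\E\|u\|_2\le\sqrt k$) scales with $k$, not $d$.

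First I would apply $L_h$-smoothness of $h$ along the segment from $\theta^t$ to $\theta^t-\alpha\delta_t u$ to get the one-step bound $h(\theta^{t+1})\le h(\theta^t)-\alpha\delta_t\langle g,u\rangle+\tfrac{L_h}{2}\alpha^2\delta_t^2\|u\|_2^2$, and then take the conditional expectation over $u$. For the first-order term I would invoke the Gaussian-smoothing identity $\E[\delta_t u\mid\theta^t]=\nabla h_\mu(\theta^t)$, where $h_\mu(x):=\E[h(x+\mu u)]$ denotes smoothing in the $V$-directions; rigorously this follows by parametrizing $V$ with an isometry $\R^k\to V$ and using the standard $\R^k$ Stein identity, which is legitimate because $h$ is constant along $V^\perp$ so that $h_\mu$ and $\nabla h_\mu$ also live in $V$. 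Then $\E[\delta_t\langle g,u\rangle\mid\theta^t]=\langle g,\nabla h_\mu(\theta^t)\rangle\ge\|g\|_2^2-\|g\|_2\,\|\nabla h_\mu(\theta^t)-g\|_2$, while $\nabla h_\mu(\theta^t)-\nabla h(\theta^t)=\E[\nabla h(\theta^t)-\nabla h(\theta^t+\mu u)]$ has norm at most $L_h\mu\,\E\|u\|_2\le L_h\mu\sqrt k$. For the second-order term I would write $\delta_t=\langle g,u\rangle+e_t$ with $|e_t|\le\tfrac{L_h\mu}{2}\|u\|_2^2$ (the descent-lemma form of $L_h$-smoothness), expand $\delta_t^2\|u\|_2^2\le 2\langle g,u\rangle^2\|u\|_2^2+2e_t^2\|u\|_2^2$, and use the moment identities to obtain $\E[\delta_t^2\|u\|_2^2\mid\theta^t]\le 2(k+2)\|g\|_2^2+\tfrac{L_h^2\mu^2}{2}k(k+2)(k+4)$.

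Combining the two estimates yields
\[
\E[h(\theta^{t+1})\mid\theta^t]\ \le\ h(\theta^t)-\alpha\|g\|_2^2+\alpha L_h\mu\sqrt k\,\|g\|_2+L_h\alpha^2(k+2)\|g\|_2^2+\tfrac{L_h^3\alpha^2\mu^2}{4}k(k+2)(k+4).
\]
Now the stepsize restriction $\alpha\le\frac{1}{4L_h(k+2)}$ forces $L_h\alpha^2(k+2)\|g\|_2^2\le\tfrac\alpha4\|g\|_2^2$; Young's inequality converts the cross term into $\tfrac\alpha4\|g\|_2^2+\alpha L_h^2\mu^2 k$; and the same restriction bounds both $\alpha L_h^2\mu^2 k$ and $\tfrac{L_h^3\alpha^2\mu^2}{4}k(k+2)(k+4)$ by a constant multiple of $L_h\mu^2 k$. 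After collecting terms one is left with $\tfrac\alpha4\|\nabla h(\theta^t)\|_2^2\le\E[h(\theta^t)-h(\theta^{t+1})\mid\theta^t]+\tfrac{L_h k\mu^2}{16}$, and taking total expectations gives the claim.

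The main obstacle I anticipate is not conceptual but a matter of disciplined constant bookkeeping: one must check that \emph{every} Gaussian moment entering both the bias and the second-moment estimates is genuinely a $\chi^2_k$-moment of $u=Ps_t$ rather than a $\chi^2_d$-moment of $s_t$, and then allocate the budget $\alpha\le\frac{1}{4L_h(k+2)}$ carefully across the three distinct error contributions so that the $\mu$-dependent remainder collapses exactly to $\tfrac{L_h k\mu^2}{16}$. A secondary point worth stating carefully is the validity of the smoothing identity $\E[\delta_t u]=\nabla h_\mu(\theta^t)$ for the degenerate Gaussian supported on $V$, since that identity is precisely where the reduction from the ambient to the intrinsic dimension is cashed in.
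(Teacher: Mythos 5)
Your proposal is correct in substance and follows the same skeleton as the paper's proof: reduce to the projected direction $u=Ps_t$ via the ridge structure, apply $L_h$-smoothness along the update $\theta^t-\alpha\widetilde g_t\,u$, decompose the finite difference as directional derivative plus an error of size $\tfrac{L_h\mu}{2}\|u\|_2^2$, invoke the $\chi^2_k$ moments $\E\|u\|_2^4=k(k+2)$, $\E\|u\|_2^6=k(k+2)(k+4)$, $\E[\langle g,u\rangle^2\|u\|_2^2]=(k+2)\|g\|_2^2$, and let the stepsize condition absorb the $L_h\alpha^2(k+2)\|g\|_2^2$ term. The one place you genuinely diverge is the cross term $-\alpha\,\E[\widetilde g_t\langle g,u\rangle]$: you route it through the smoothed function $h_\mu$, the identity $\E[\widetilde g_t u]=\nabla h_\mu(\theta^t)$, the bias bound $\|\nabla h_\mu-\nabla h\|_2\le L_h\mu\,\E\|u\|_2$, and Young's inequality, whereas the paper avoids $h_\mu$ entirely by the pointwise estimate $-\widetilde g_t\,g_t\le-\tfrac12 g_t^2+\tfrac12(g_t-\widetilde g_t)^2$ followed by $\E[(g_t-\widetilde g_t)^2]\le\tfrac{L_h^2\mu^2}{4}\E\|u\|_2^4$. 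The paper's version is more elementary (no differentiation under the integral for a degenerate Gaussian to justify) and, as it happens, slightly tighter where it matters: with your crude bound $\E\|u\|_2\le\sqrt k$ the Young remainder is $\tfrac{\alpha L_h^2\mu^2k}{2}\le\tfrac{L_h k\mu^2}{8(k+2)}$, and adding the sixth-moment term $\tfrac{L_h^3\alpha^2\mu^2}{4}k(k+2)(k+4)\le\tfrac{L_h k\mu^2(k+4)}{64(k+2)}$ gives a total exceeding $\tfrac{L_hk\mu^2}{16}$ precisely when $k=1$ (it equals $\tfrac{13}{192}L_h\mu^2$ versus the target $\tfrac{12}{192}L_h\mu^2$). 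This is exactly the bookkeeping hazard you flagged; it is repaired either by using the exact value $(\E\|u\|_2)^2=2/\pi<1$ at $k=1$, or by switching to the paper's pointwise decomposition, whose remainder $\tfrac{\alpha L_h^2k(k+2)\mu^2}{8}\le\tfrac{L_hk\mu^2}{32}$ splits the budget evenly with the sixth-moment term for every $k\ge1$. No conceptual gap otherwise.
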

\end{propbox}

By averaging Lemma~\ref{lem:NCRS-one-step-nonsmooth} over $t=1,\dots,T$ while keeping a fixed smoothing radius $\mu$.

\begin{propbox}
\begin{theorem}\label{thm:zo2pt-constmu}
Assume the observed objective is of the form  $h(x)=g(Ax)$ for some matrix $A\in\R^{k\times d}$
with full row rank $\rank(A)=k\le d$, and some function $g:\R^k\to\R$ such that
$h$ is $L_h$-smooth on $\R^d$ and bounded below, i.e.,
$h^\star:=\inf_{\theta\in\R^d} h(\theta)>-\infty$.
Define
$
\Delta h:=h(\theta^1)-h^\star$
and let $T\ge 1$.  By following 
\Cref{alg:zo2pt} with step size
$\alpha\le \frac{1}{4L_h(k+2)}$, we have:
\[
\frac{1}{T}\sum_{t=1}^T \E\!\bigl[\|\nabla h(\theta^t)\|_2^2\bigr]
\;\le\;
\frac{4\,\Delta h}{\alpha\,T}
\;+\;
\frac{L_h\,k\,\mu^2}{4\,\alpha}.
\]
In particular, for the maximal admissible stepsize $\alpha=\frac{1}{4L_h(k+2)}$, we have:
\begin{align*}
\frac{1}{T}\sum_{t=1}^T \E\!\bigl[\|\nabla h(\theta^t)\|_2^2\bigr]
\;\le\;&\;
\frac{16\,L_h\,(k+2)\,\Delta h}{T} +\; L_h^2\,k\,(k+2)\,\mu^2.
\end{align*}
\end{theorem}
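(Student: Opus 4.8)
The plan is to obtain the bound by a straightforward telescoping of the one-step descent inequality of \Cref{lem:NCRS-one-step-nonsmooth}, which already encapsulates all the ridge-structure bookkeeping: the projector $P:=A^\top(AA^\top)^{-1}A$, the identity $P\nabla h=\nabla h$, and the rank-$k$ Gaussian moment $\E\|Ps\|_2^2=k$. Since the stepsize is chosen to satisfy $\alpha\le \tfrac{1}{4L_h(k+2)}$, that lemma applies at every iteration, giving for each $t\ge 1$
$$
\frac{\alpha}{4}\,\E\!\bigl[\|\nabla h(\theta^t)\|_2^2\bigr]
\;\le\;
\E\!\bigl[h(\theta^t)-h(\theta^{t+1})\bigr]
+\frac{L_h\,k\,\mu^2}{16}.
$$

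Next I would sum this inequality over $t=1,\dots,T$. The descent terms on the right telescope to $\E\!\bigl[h(\theta^1)-h(\theta^{T+1})\bigr]$; since $h$ is bounded below by $h^\star$ we have $h(\theta^{T+1})\ge h^\star$ almost surely, so this is at most $h(\theta^1)-h^\star=\Delta h$. The constant terms contribute $T L_h k\mu^2/16$. Hence $\tfrac{\alpha}{4}\sum_{t=1}^T\E\|\nabla h(\theta^t)\|_2^2 \le \Delta h + T L_h k\mu^2/16$, and dividing through by $\alpha T/4$ yields the first displayed bound of the theorem. For the ``in particular'' statement I would substitute the maximal admissible stepsize $\alpha=\tfrac{1}{4L_h(k+2)}$, so that $1/\alpha=4L_h(k+2)$; then $\tfrac{4\Delta h}{\alpha T}=\tfrac{16L_h(k+2)\Delta h}{T}$ and $\tfrac{L_h k\mu^2}{4\alpha}=L_h^2 k(k+2)\mu^2$, which is exactly the claimed inequality.

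There is no real obstacle at the level of this theorem: it is an immediate corollary of \Cref{lem:NCRS-one-step-nonsmooth} via telescoping together with the lower bound on $h$. The substantive work lies entirely in that lemma --- controlling the bias and the second moment of the two-point estimate $\delta_t s_t$ under the ridge structure so that both the descent/smoothness term and the residual variance term scale with $k$ rather than the ambient dimension $d$, and identifying the stepsize restriction $\alpha\le\tfrac{1}{4L_h(k+2)}$ needed to absorb the variance term into the descent --- but that is assumed here. The only point worth flagging is that $\mu$ is held fixed rather than sent to $0$, so the $\mu^2$ term is a genuine bias floor; to convert the bound into an $\varepsilon$-stationarity guarantee one would additionally take $\mu=\mathcal O\!\bigl(\varepsilon/(L_h\sqrt{k(k+2)})\bigr)$, but the theorem as stated only records the raw two-term bound.
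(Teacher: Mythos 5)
Your proposal is correct and coincides with the paper's own (implicit) argument: the theorem is obtained exactly by telescoping the one-step inequality of \Cref{lem:NCRS-one-step-nonsmooth} over $t=1,\dots,T$, bounding the telescoped sum by $\Delta h$ via boundedness below, dividing by $\alpha T/4$, and then substituting $\alpha=\tfrac{1}{4L_h(k+2)}$. Your arithmetic for both displayed bounds checks out, and your remark about the non-vanishing $\mu^2$ bias floor matches the paper's subsequent remark.
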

\end{propbox}
\medskip
\begin{remark}
The bound in \Cref{thm:zo2pt-constmu} exhibits the usual tradeoff induced by a \emph{fixed} smoothing radius $\mu$:
the averaged squared gradient decreases as $\mathcal{O}(k/T)$ up to a non-vanishing bias floor of order
$\mathcal{O}(k^2\mu^2)$.
Therefore, for any target accuracy $\varepsilon>0$, if we choose $\alpha=\frac{1}{4L_h(k+2)}$ and the smoothing parameter such that
$
L_h^2\,k\,(k+2)\,\mu^2 \;\le\; \frac{\varepsilon^2}{2}$
and take
$
T \;\ge\; \frac{32\,L_h\,(k+2)\,\Delta h}{\varepsilon^2},$
then
$
\frac{1}{T}\sum_{t=1}^T \E\!\bigl[\|\nabla h(\theta^t)\|_2^2\bigr]\;\le\;\varepsilon^2.
$
By Jensen's inequality,
$\frac{1}{T}\sum_{t=1}^T \E[\|\nabla h(\theta^t)\|_2]\le \varepsilon$ under the same choice.
Hence, as for NCRS, the number of function evaluations required to reach an averaged
stationarity level $\varepsilon$ scales as $\mathcal{O}(k/\varepsilon^2)$ in the ridge model, provided that the
smoothing radius $\mu$ is chosen sufficiently small.

\end{remark}

\subsection{Proofs}

\begin{propbox}
\begin{lemma}
\label{lem:rankk-gauss-moments}
Let $P\in\R^{d\times d}$ be an orthogonal projector, i.e.,
$
P^\top = P
\,\,\text{and}\,\,
P^2 = P,
$
with $\rank(P)=k$.
Let $z\sim \mathcal N(0,P)$. Then
$$\begin{cases}
  \E\|z\|_2^2=k,\\
  \E\|z\|_2^4=k(k+2),\\
  \E\|z\|_2^6=k(k+2)(k+4).
\end{cases}$$
\end{lemma}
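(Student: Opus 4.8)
The plan is to reduce everything to the moments of a chi-squared variable. Since $P$ is symmetric and idempotent with $\rank(P)=k$, its eigenvalues are $1$ with multiplicity $k$ and $0$ with multiplicity $d-k$; fixing a spectral decomposition $P=\sum_{i=1}^k u_i u_i^\top$ with $u_1,\dots,u_k$ orthonormal, the degenerate Gaussian $z\sim\mathcal N(0,P)$ admits the representation $z=\sum_{i=1}^k \xi_i u_i$ with $\xi_1,\dots,\xi_k$ i.i.d.\ $\mathcal N(0,1)$ (indeed $\E[zz^\top]=\sum_i u_iu_i^\top=P$ and $z$ is Gaussian). By orthonormality of the $u_i$, we get $\|z\|_2^2=\sum_{i=1}^k \xi_i^2=:Y$, so $Y\sim\chi^2_k$, and the three claimed identities are exactly $\E[Y]$, $\E[Y^2]$, $\E[Y^3]$.

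The second step is to evaluate these three moments. The cleanest route is the standard formula $\E[Y^m]=2^m\,\Gamma(k/2+m)/\Gamma(k/2)=k(k+2)\cdots(k+2m-2)$, which yields $\E[Y]=k$, $\E[Y^2]=k(k+2)$, and $\E[Y^3]=k(k+2)(k+4)$ directly. Alternatively, one can avoid the Gamma function and expand $Y^m=\bigl(\sum_i\xi_i^2\bigr)^m$ by the multinomial theorem, then use independence together with the scalar Gaussian moments $\E[\xi_i^2]=1$, $\E[\xi_i^4]=3$, $\E[\xi_i^6]=15$: for example $\E[Y^2]=\sum_i\E[\xi_i^4]+\sum_{i\neq j}\E[\xi_i^2]\E[\xi_j^2]=3k+k(k-1)=k(k+2)$, and for $\E[Y^3]$ one groups the index triples according to their coincidence pattern $(3)$, $(2,1)$, $(1,1,1)$, counts each class, and weights by $15$, $3\cdot 1$, $1$ respectively, giving $15k+9k(k-1)+k(k-1)(k-2)=k(k+2)(k+4)$.

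There is no real obstacle in this lemma; it is a routine computation. The only place requiring a little care, should one prefer the elementary expansion over the Gamma-function identity, is correctly enumerating the multi-index coincidence patterns and their multiplicities in the cube $\bigl(\sum_i\xi_i^2\bigr)^3$; invoking the closed-form $\chi^2_k$ moment formula sidesteps this entirely, so I would state the reduction to $Y\sim\chi^2_k$ and then simply cite that formula.
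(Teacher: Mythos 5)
Your proof is correct and follows essentially the same route as the paper: both reduce $z\sim\mathcal N(0,P)$ to $z=U\xi$ (equivalently $\sum_i \xi_i u_i$) with orthonormal columns and i.i.d.\ standard normals, so that $\|z\|_2^2\sim\chi^2_k$, and then read off the first three $\chi^2_k$ moments. The only difference is that the paper simply asserts those moments while you supply a derivation (Gamma formula or multinomial expansion), and your computations check out.
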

\end{propbox}
\begin{proof}
Since $P$ is an orthogonal projector of rank $k$, there exists a matrix
$U\in\R^{d\times k}$ with orthonormal columns such that
\[
P=UU^\top .
\]
Let $\xi\sim \mathcal N(0,I_k)$ and define $z:=U\xi$.
Then $z$ is Gaussian with mean $0$ and covariance
$UU^\top=P,$
hence $z\sim\mathcal N(0,P)$.

Moreover, because $U$ has orthonormal columns,
\[
\|z\|_2^2=\|U\xi\|_2^2=\xi^\top U^\top U\xi=\|\xi\|_2^2.
\]
Therefore $\E\|z\|_2^2=\E\|\xi\|_2^2=k$, \,\,\,$\E\|z\|_2^4=\E\|\xi\|_2^4=k(k+2)$\,\, and \,\, $\E\|z\|_2^6=\E\|\xi\|_2^6=k(k+2)(k+4).$
\end{proof}

\begin{propbox}
\begin{lemma}\label{lem:mycase}
Let $P\in\R^{d\times d}$ be an orthogonal projector, i.e.,
$
P^\top = P
\,\,\text{and}\,\,
P^2 = P,
$
with $\rank(P)=k$. Let $s\sim\mathcal N(0,I_d)$. Then for any $a\in\R^d$,
\[
\mathbb E\Big[(a^\top s)^2\,\|Ps\|_2^2\Big]
\;=\;
k\,\|a\|_2^2 \;+\; 2\,a^\top P a.
\]
In particular, if $a\in\mathrm{range}(P)$, then
\[
\mathbb E\Big[(a^\top s)^2\,\|Ps\|_2^2\Big]=(k+2)\|a\|_2^2.
\]
\end{lemma}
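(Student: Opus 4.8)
The plan is to reduce the statement to an explicit fourth-moment computation for a standard Gaussian vector. First note that since $P$ is an orthogonal projector, $P^\top=P$ and $P^2=P$, so $\|Ps\|_2^2 = s^\top P^\top P s = s^\top P s$, and therefore
$$
\mathbb{E}\big[(a^\top s)^2\,\|Ps\|_2^2\big]
= \mathbb{E}\big[(a^\top s)^2\, s^\top P s\big]
= \sum_{i,j,l,m} a_i a_j P_{lm}\,\mathbb{E}[s_i s_j s_l s_m].
$$
By Isserlis' (Wick's) theorem, for $s\sim\mathcal{N}(0,I_d)$ one has $\mathbb{E}[s_i s_j s_l s_m] = \delta_{ij}\delta_{lm} + \delta_{il}\delta_{jm} + \delta_{im}\delta_{jl}$, so the sum splits into exactly three pieces indexed by the three pairings.

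Next I would evaluate the three pieces. The $\delta_{ij}\delta_{lm}$ pairing contributes $\big(\sum_i a_i^2\big)\big(\sum_l P_{ll}\big) = \|a\|_2^2\,\tr(P)$, and $\tr(P)=\rank(P)=k$ because $P$ is an orthogonal projector. Each of the remaining two pairings collapses, using $P^\top=P$, to $\sum_{i,j} a_i a_j P_{ij} = a^\top P a$. Summing the three contributions gives
$$
\mathbb{E}\big[(a^\top s)^2\,\|Ps\|_2^2\big] = k\,\|a\|_2^2 + 2\,a^\top P a,
$$
which is the claimed identity. For the special case $a\in\range(P)$, we have $Pa=a$, hence $a^\top P a = \|a\|_2^2$, and substituting into the identity yields $(k+2)\|a\|_2^2$.

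An equivalent route, if one prefers to avoid Isserlis, is to diagonalize: choose an orthonormal basis in which $P$ is the coordinate projector onto the first $k$ coordinates (possible since $P$ is an orthogonal projector of rank $k$). In these coordinates $s$ is still standard Gaussian, $a^\top s = \sum_i a_i s_i$ and $\|Ps\|_2^2 = \sum_{j\le k} s_j^2$, so the expectation reduces to sums of $\mathbb{E}[s_i^2 s_j^2]$, which equal $3$ when $i=j$ and $1$ when $i\neq j$; separating the indices $i\le k$ from $i>k$ recovers $k\|a\|_2^2 + 2\sum_{i\le k}a_i^2 = k\|a\|_2^2 + 2\|Pa\|_2^2 = k\|a\|_2^2 + 2\,a^\top P a$. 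There is no genuine obstacle here; the only point requiring care is bookkeeping the three Wick pairings (equivalently, correctly splitting the index ranges in the diagonalized computation) and using $P^\top=P$, $P^2=P$, $\tr(P)=k$ at the right moments.
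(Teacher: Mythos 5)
Your proof is correct and follows essentially the same route as the paper: both apply Isserlis' theorem to the fourth Gaussian moment, identify the three Wick pairings as $\|a\|_2^2\,\tr(P)$ and twice $a^\top P a$, and use $\tr(P)=k$ together with $Pa=a$ for the special case. The only cosmetic difference is that the paper first assembles the matrix $M=\mathbb E[(a^\top s)^2 ss^\top]=\|a\|_2^2 I_d+2aa^\top$ and then traces it against $P$, whereas you sum over all four indices directly; your diagonalization alternative is also a valid, equally standard variant.
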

\end{propbox}
\begin{proof}
Write $\|Ps\|_2^2=s^\top P s=\mathrm{tr}(Pss^\top)$, hence
\[
(a^\top s)^2\|Ps\|_2^2
=(a^\top s)^2\,\mathrm{tr}(Pss^\top)
=\mathrm{tr}\!\Big(P\,(a^\top s)^2\,ss^\top\Big).
\]
Taking expectation and using linearity of trace,
\[
\mathbb E\big[(a^\top s)^2\|Ps\|_2^2\big]
=\mathrm{tr}\!\Big(P\,\mathbb E\big[(a^\top s)^2\,ss^\top\big]\Big).
\]
We now compute the matrix $M:=\mathbb E[(a^\top s)^2\,ss^\top]$ entrywise.
For $i,j\in\{1,\dots,d\}$,
\[
M_{ij}=\mathbb E\big[(a^\top s)^2 s_i s_j\big]
=\sum_{p,q=1}^d a_p a_q\,\mathbb E[s_p s_q s_i s_j].
\]
By \emph{Isserlis' theorem} (Wick's formula) for centered Gaussian vectors,
for $s\sim\mathcal N(0,I_d)$ we have, for all indices $p,q,i,j$,
\[
\mathbb E[s_p s_q s_i s_j]
=\delta_{pq}\delta_{ij}+\delta_{pi}\delta_{qj}+\delta_{pj}\delta_{qi},
\]
where $\delta$ denotes the Kronecker delta. Plugging this in gives
\[
M_{ij}
=\Big(\sum_{p=1}^d a_p^2\Big)\delta_{ij}+a_i a_j+a_j a_i
=\|a\|_2^2\,\delta_{ij}+2a_i a_j.
\]
Hence $M=\|a\|_2^2 I_d+2aa^\top$. Therefore
\[
\mathbb E\big[(a^\top s)^2\|Ps\|_2^2\big]
=\mathrm{tr}\!\Big(P\big(\|a\|_2^2 I_d+2aa^\top\big)\Big)
=\|a\|_2^2\,\mathrm{tr}(P)+2\,\mathrm{tr}(Paa^\top).
\]
Finally, $\mathrm{tr}(P)=\rank(P)=k$ and $\mathrm{tr}(Paa^\top)=a^\top P a$, yielding
\[
\mathbb E\big[(a^\top s)^2\|Ps\|_2^2\big]=k\|a\|_2^2+2a^\top P a.
\]
If $Pa=a$, then $a^\top P a=\|a\|_2^2$, giving the last statement.
\end{proof}

\begin{propbox}
\begin{lemma}[\Cref{lem:NCRS-one-step-nonsmooth}]
Assume that $h:\R^d\to\R$ is $L_h$-smooth. Assume furthermore that
$h(\theta)=g(A\theta)$ for some $A\in\R^{k\times d}$ with $\rank(A)=k$.  By following \Cref{alg:zo2pt}  with  step size  $\alpha\le \frac{1}{4L_h(k+2)}$, we have:
$$
\frac{\alpha}{4}\,\E\!\bigl[\|\nabla h(\theta^t)\|_2^2\bigr]
\;\le\;
\E\!\bigl[h(\theta^t)-h(\theta^{t+1})\bigr]
+\frac{L_h k\mu^2}{16}.
$$
\end{lemma}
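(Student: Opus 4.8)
The plan is to run the classical two-point / Gaussian-smoothing descent argument (in the spirit of Nesterov--Spokoiny and Ghadimi--Lan), but carried out entirely in the rank-$k$ effective direction $u_t:=P s_t$, $P:=A^\top(AA^\top)^{-1}A$; this is what converts ambient-dimension factors into $k$-factors. Write $\delta_t:=\tfrac1\mu\bigl(h(\theta^t+\mu s_t)-h(\theta^t)\bigr)$. Because $h$ is invariant along $\ker(A)=\range(P)^\perp$ and $(I-P)s_t\in\ker(A)$, we have $h(\theta^t+\mu s_t)=h(\theta^t+\mu u_t)$ (so $\delta_t$ depends on $s_t$ only through $u_t$) and, crucially, $h(\theta^{t+1})=h(\theta^t-\alpha\delta_t s_t)=h(\theta^t-\alpha\delta_t u_t)$. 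Since moreover $\nabla h(\theta^t)\in\range(A^\top)=\range(P)$ (already established), $\langle\nabla h(\theta^t),s_t\rangle=\langle\nabla h(\theta^t),u_t\rangle$. Applying the $L_h$-smoothness descent inequality to the pair $\theta^t,\ \theta^t-\alpha\delta_t u_t$ gives, for every $t$,
$$
h(\theta^{t+1})\ \le\ h(\theta^t)-\alpha\,\delta_t\,\langle\nabla h(\theta^t),u_t\rangle+\tfrac{L_h\alpha^2}{2}\,\delta_t^2\,\|u_t\|_2^2 ,
$$
after which every expectation is over $u_t=Ps_t\sim\mathcal N(0,P)$, so only $k$-dimensional Gaussian moments enter.

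For the first-order term, write $\delta_t=\langle\nabla h(\theta^t),u_t\rangle+e_t$ with $|e_t|\le\tfrac{L_h\mu}{2}\|u_t\|_2^2$ (second-order smoothness bound). Then $\E\bigl[\langle\nabla h(\theta^t),u_t\rangle^2\mid\theta^t\bigr]=\nabla h(\theta^t)^\top P\,\nabla h(\theta^t)=\|\nabla h(\theta^t)\|_2^2$, and by Cauchy--Schwarz together with $\E\|u_t\|_2^4=k(k+2)$ (Lemma~\ref{lem:rankk-gauss-moments}),
$$
\bigl|\E\bigl[e_t\langle\nabla h(\theta^t),u_t\rangle\mid\theta^t\bigr]\bigr|\ \le\ \tfrac{L_h\mu}{2}\sqrt{k(k+2)}\ \|\nabla h(\theta^t)\|_2 .
$$
Hence $\E[\delta_t\langle\nabla h(\theta^t),u_t\rangle\mid\theta^t]\ge\|\nabla h(\theta^t)\|_2^2-\tfrac{L_h\mu}{2}\sqrt{k(k+2)}\,\|\nabla h(\theta^t)\|_2$, and Young's inequality (absorbing $\tfrac12\|\nabla h(\theta^t)\|_2^2$) makes this $\ \ge\ \tfrac12\|\nabla h(\theta^t)\|_2^2-\tfrac{L_h^2\mu^2}{8}k(k+2)$.

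For the second-order term, $|\delta_t|\le|\langle\nabla h(\theta^t),u_t\rangle|+\tfrac{L_h\mu}{2}\|u_t\|_2^2$ yields $\delta_t^2\|u_t\|_2^2\le 2\langle\nabla h(\theta^t),u_t\rangle^2\|u_t\|_2^2+\tfrac{L_h^2\mu^2}{2}\|u_t\|_2^6$; taking conditional expectations and invoking Lemma~\ref{lem:mycase} (with $a=\nabla h(\theta^t)\in\range(P)$, using $\langle a,u_t\rangle=\langle a,Ps_t\rangle$ and $\|u_t\|_2^2=\|Ps_t\|_2^2$) together with $\E\|u_t\|_2^6=k(k+2)(k+4)$ (Lemma~\ref{lem:rankk-gauss-moments}) gives
$$
\E\bigl[\delta_t^2\|u_t\|_2^2\mid\theta^t\bigr]\ \le\ 2(k+2)\,\|\nabla h(\theta^t)\|_2^2+\tfrac{L_h^2\mu^2}{2}\,k(k+2)(k+4).
$$
Substituting the last two displays into the one-step inequality, the coefficient of $\|\nabla h(\theta^t)\|_2^2$ becomes $-\tfrac{\alpha}{2}+L_h\alpha^2(k+2)$, which is $\le-\tfrac{\alpha}{4}$ as soon as $\alpha\le\frac1{4L_h(k+2)}$ (so that $L_h\alpha(k+2)\le\tfrac14$). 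The two leftover $\mu^2$-terms, $\tfrac{\alpha L_h^2\mu^2}{8}k(k+2)$ and $\tfrac{L_h^3\alpha^2\mu^2}{4}k(k+2)(k+4)$, are each controlled by the same stepsize budget (using also $\tfrac{k+4}{k+2}\le\tfrac53$ for $k\ge1$) by multiples of $L_h k\mu^2$ whose sum is at most $\tfrac{L_h k\mu^2}{16}$. Taking total expectation over $\theta^t$ and rearranging gives the claim.

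The main issue is bookkeeping, not ideas. The one indispensable move is the reduction in the first paragraph: one must expand $h(\theta^{t+1})$ as $h(\theta^t-\alpha\delta_t u_t)$, not as $h(\theta^t-\alpha\delta_t s_t)$ with the full norm $\|s_t\|_2^2$, since in the latter the second-order term would pick up $\E\|(I-P)s_t\|_2^2=d-k$ (as $\delta_t$ is independent of $(I-P)s_t$), reintroducing the ambient dimension. After that, the clean constants $\tfrac{\alpha}{4}$ and $\tfrac{L_h k\mu^2}{16}$ emerge only if every $O(\mu^2)$ error is paired against the factor $(k+2)^{-1}$ furnished by the stepsize constraint; this is also what pins down the Young constant used on the first-order cross term.
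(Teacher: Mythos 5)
Your proof is correct and follows essentially the same route as the paper: reduce the update to the projected direction $Ps_t$, apply $L_h$-smoothness, split the finite difference into $\langle\nabla h(\theta^t),Ps_t\rangle$ plus an $O(\mu\|Ps_t\|_2^2)$ error, and invoke the rank-$k$ Gaussian moment lemmas before using the stepsize condition to absorb the quadratic gradient term and the two $\mu^2$ remainders into $\tfrac{L_hk\mu^2}{16}$. The only cosmetic difference is that you handle the first-order cross term via Cauchy--Schwarz followed by Young's inequality in expectation, whereas the paper applies the pointwise bound $-\widetilde{g}g\le-\tfrac12 g^2+\tfrac12(g-\widetilde{g})^2$; both yield the identical intermediate bound.
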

\end{propbox}
\begin{proof}[Proof of Lemma~\ref{lem:NCRS-one-step-nonsmooth}] For all $\theta\in\R^d$, we denote
\[
\left\{
\begin{aligned}
\widetilde{g_t} 
&:= \frac{h(\theta^t+\mu s_t)-h(\theta^t)}{\mu},\\
g_t 
&:= \langle \nabla h(\theta^t),\, s_t\rangle .
\end{aligned}
\right.
\]

Let $t\ge 1$ and let $P:=A^\top(AA^\top)^{-1}A$ the orthogonal projector onto $\mathrm{range}(A^\top)$. We have:
\begin{align*}
\E\!\left[h(\theta^{t+1}) \mid \theta^t\right]
&= \E\!\left[h\!\left(\theta^t-\alpha\,\frac{h(\theta^t+\mu s_t)-h(\theta^t)}{\mu}\,s_t\right)\,\Big|\,\theta^t\right]
\\
&=\E\!\left[g\!\left(A \theta^t-\alpha\,\frac{h(\theta^t+\mu s_t)-h(\theta^t)}{\mu}\,A s_t\right)\,\Big|\,\theta^t\right]
\\
&=\E\!\left[g\!\left(A \theta^t-\alpha\,\frac{h(\theta^t+\mu s_t)-h(\theta^t)}{\mu}\,AP s_t\right)\,\Big|\,\theta^t\right]\\
&=\E\!\left[h\!\left( \theta^t-\alpha\,\widetilde{g_t} \,P s_t\right)\,\Big|\,\theta^t\right]\\
&\le \E\!\left[h(\theta^t)
-\alpha\,\widetilde{g_t} \,\langle \nabla h(\theta^t),P s_t\rangle
+\frac{L_h}{2}\big(\alpha\,\widetilde{g_t} \,\|P s_t\|_2\big)^2 \,\Big|\,\theta^t\right]
\qquad\text{(smoothness)}\\
&= \E\!\left[h(\theta^t)
-\alpha\,\widetilde{g_t} \,\langle P^\top  \nabla h(\theta^t), s_t\rangle
+\frac{L_h}{2}\big(\alpha\,\widetilde{g_t} \,\|P s_t\|_2\big)^2 \,\Big|\,\theta^t\right]
\\
&= \E\!\left[h(\theta^t)
-\alpha\,\widetilde{g_t} \,\langle P \nabla h(\theta^t), s_t\rangle
+\frac{L_h}{2}\big(\alpha\,\widetilde{g_t} \,\|P s_t\|_2\big)^2 \,\Big|\,\theta^t\right] \qquad (P=P^\top)
\\
&= \E\!\left[h(\theta^t)
-\alpha\,\widetilde{g_t} g_t
+\frac{L_h}{2}\big(\alpha\,\widetilde{g_t}\big)^2 \|Ps_t\|_2^2 \,\Big|\,\theta^t\right]. \qquad \text{(see \cref{eq:grad-in-range-AT})}
\end{align*}
Write $\delta_t:=g_t-\widetilde{g_t}$. Then
\[
-\widetilde{g_t}\,g_t
=-(g_t-\delta_t)g_t
=-g_t^2+\delta_tg_t
\le -g_t^2+\frac12g_t^2+\frac12\delta_t^2
=-\frac12\,g_t^2+\frac12\,\delta_t^2.
\]

Using the smoothness of $h$, we also have the following bound:
\begin{align*}
\Big|\,h(\theta^t+\mu  s_t)-h(\theta^t)- \mu \langle\nabla h(\theta^t),\,s_t\rangle\,\Big| &=\Big|\,h(\theta^t+\mu  P s_t)-h(\theta^t)- \mu \langle\nabla h(\theta^t),\,P s_t\rangle\,\Big|
\\
&\le\ \frac{L_h}{2}\,\mu^2\|P s_t \|^2.
\end{align*}
This implies that $\delta_t^2 \le \frac{L_h^2 \mu^2}{4}\|P s_t \|^4.$  Thus $-\widetilde{g_t}\,g_t\le -\frac{g_t^2}{2}+ \frac{L_h^2 \mu^2}{8}\|P s_t \|^4.$ We deduce that:

$$\mathbb{E}[h(\theta^{t+1}) \, |\, \theta^t]\le h(\theta^{t}) -\frac{\alpha}{2} \mathbb{E}[g_t^2\, |\, \theta^t]
+\frac{\alpha L_h^2 \mu^2}{8}\mathbb{E}\|P s_t \|^4+\frac{L_h}{2}\alpha^2 \mathbb{E}\big[ \widetilde{g_t}^2 \|P s_t \|^2 \,\, | \,\, \theta^t\big] .$$ 

We remark that:
\begin{align*}
\widetilde{g_t}^2 \|P s_t \|^2 &\le 2  g_t^2 \|P s_t \|^2+2\delta_t^2 \|P s_t \|^2\\
&\le  2  g_t^2 \|P s_t \|^2+ \frac{L_h^2 \mu^2}{2}\|P s_t \|^6.
\end{align*}
Therefore:
$$\mathbb{E}[h(\theta^{t+1}) \, |\, \theta^t]\le h(\theta^{t}) -\frac{\alpha}{2}\mathbb{E}[ g_t^2\, |\, \theta^t]
+\frac{\alpha L_h^2 \mu^2}{8}\mathbb{E}\|P s_t \|^4+L_h\alpha^2 \mathbb{E}\big[g_t^2 \|P s_t \|^2 \,\, | \,\, \theta^t\big]+\frac{L_h^3 \mu^2 \alpha^2}{4}\mathbb{E}\|P s_t \|^6 .$$ 
Using \Cref{lem:rankk-gauss-moments} and  \Cref{lem:mycase}, we obtain:

$$\mathbb{E}[h(\theta^{t+1}) \, |\, \theta^t]\le h(\theta^{t}) -\frac{\alpha}{2} \mathbb{E}[g_t^2\, |\, \theta^t]
+\frac{\alpha L_h^2 k(k+2) \mu^2}{8}+L_h\alpha^2 (k+2)\|\nabla h(\theta^t)\|_2^2+\frac{L_h^3 \mu^2 \alpha^2 k(k+2)(k+4)}{4} .$$ 

Let $e_1=(1,0,\ldots,0) \in \mathbb{R}^d$. We have:
$$\mathbb{E}[g_t^2 \, |\, \theta^t]=\mathbb{E}[ \langle s_t,e_1 \rangle^2 \|\nabla h(\theta^t) \|_2^2 \, |\, \theta^t] = \|\nabla h(\theta^t) \|_2^2 \mathbb{E}_{s\sim \mathcal{N}(0,1) }s^2=  \|\nabla h(\theta^t) \|_2^2.$$
It follows that:
$$
\Bigl(\tfrac{\alpha}{2}-L_h\,(k+2)\,\alpha^2\Bigr)\,
\mathbb{E}\bigl\|\nabla h(\theta^t)\bigr\|_2^2
\;\le\;
\mathbb{E}\bigl[h(\theta^t)-h(\theta^{t+1})\bigr]
\;+\;
\frac{\alpha\,L_h^2\,k(k+2)\,\mu^2}{8}
\;+\;
\frac{L_h^3\,\alpha^2\,k(k+2)(k+4)\,\mu^2}{4}.
$$

Since
$\alpha \;\le\; \frac{1}{4L_h(k+2)},$ it holds that 
 $\alpha L_h(k+2)\le \tfrac14$, hence
$$
\tfrac{\alpha}{2}-L_h(k+2)\alpha^2
=\alpha\Bigl(\tfrac12-\alpha L_h(k+2)\Bigr)
\;\ge\;\frac{\alpha}{4}.
$$
This implies that:
$$
\frac{\alpha}{4}\,
\mathbb{E}\bigl\|\nabla h(\theta^t)\bigr\|_2^2
\;\le\;
\mathbb{E}\bigl[h(\theta^t)-h(\theta^{t+1})\bigr]
+\frac{\alpha L_h^2 k(k+2)\mu^2}{8}
+\frac{L_h^3\alpha^2 k(k+2)(k+4)\mu^2}{4}.
$$
Moreover, we have also:
$$
\frac{\alpha L_h^2 k(k+2)\mu^2}{8}
\;\le\;
\frac{L_h k\mu^2}{32}
\,\, \text{ and }\,\,
\frac{L_h^3\alpha^2 k(k+2)(k+4)\mu^2}{4}
\;\le\;
\frac{L_h k\mu^2}{32},
$$
so that:
$$
\frac{\alpha}{4}\,
\mathbb{E}\bigl\|\nabla h(\theta^t)\bigr\|_2^2
\;\le\;
\mathbb{E}\bigl[h(\theta^t)-h(\theta^{t+1})\bigr]
+\frac{L_h k\mu^2}{16}.
$$

\end{proof}

\section{From Probabilistic Link Functions to Our Confidence-Score Model}
\label{app:prob-to-score}

Let $\Delta(x,y):=f(x)-f(y)$.
A classical pairwise-comparison model is specified by a \emph{probabilistic link}
$\sigma:\R\to[0,1]$ such that, conditionally on $(x,y)$, the oracle outputs a label
$B(x,y)\in\{-1,+1\}$ with:
$$
\Pr\!\big(B(x,y)=+1 \mid x,y\big)=\sigma(\Delta(x,y)),
$$
where $B=+1$ means ``prefer $y$ over $x$'' and $B=-1$ means the opposite.
Typically $\sigma$ is nondecreasing, satisfies $\sigma(0)=\tfrac12$, and is \emph{antisymmetric}:
$\sigma(-u)=1-\sigma(u)$ for all $u\in\R$.

\paragraph{A confidence-score oracle satisfying Assumption~\ref{assump:signal-variance-score}.}
Given such a link $\sigma$, we define the deterministic confidence score:
$$
\tilde R(x,y)\ :=\ 2\sigma(\Delta(x,y))-1 \ \in[-1,1].
$$
Define $\rho:\R_+\to[0,1]$ by:
$$
\forall t \ge 0,\,\,\rho(t)\ :=\ 2\sigma(t)-1.
$$
Then $\rho(0)=0$ and $\rho$ is nondecreasing on $\R_+$.
Moreover, by antisymmetry, for any $u\in\R$, we have:
$$
2\sigma(u)-1
=
\sign(u)\,\bigl(2\sigma(|u|)-1\bigr)
=
\sign(u)\,\rho(|u|).
$$
Applying this with $u=\Delta(x,y)$ gives, for $\Delta(x,y)\neq 0$,
$$
\sign(\Delta(x,y))\,\tilde R(x,y)
=
\sign(\Delta(x,y))\,(2\sigma(\Delta(x,y))-1)
=
\rho(|\Delta(x,y)|).
$$
Since $\tilde R(x,y)$ is deterministic given $(x,y)$, we have:
\[
\mathbb{E}\!\left[\sign(\Delta(x,y))\,\tilde R(x,y)\mid x,y\right]
=
\rho(|\Delta(x,y)|).
\]
Furthermore,
\[
\mathbb{E}\!\left[\tilde R(x,y)^2\mid x,y\right]
=
\tilde R(x,y)^2
=
\rho(|\Delta(x,y)|)^2
\le
\rho(|\Delta(x,y)|),
\]
because $\rho(\cdot)\in[0,1]$. Therefore the first two conditions of Assumption~\ref{assump:signal-variance-score}
are satisfied with $C=1$.

\paragraph{Local linear growth of $\rho$ near $0$.}
Assumption~\ref{assump:signal-variance-score} further requires the existence of constants $c>0$ and $r>0$ such that
$\rho(t)\ge c t$ for all $t\in[0,r]$. This property follows from a mild regularity condition on the link $\sigma$.
\begin{propbox}
\begin{lemma}
\label{lem:sigma-local-linear}
Assume that $\sigma$ is differentiable at $0$ and $\sigma'(0)>0$.
Then there exist constants $c>0$ and $r>0$ such that for all $t\in[0,r]$, we have
$$
\rho(t)=2\sigma(t)-1\ \ge\ c\,t.
$$
In particular, one may take $c=\sigma'(0)$ and $r>0$ small enough.
\end{lemma}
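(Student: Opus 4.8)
The plan is to reduce the claim to the first-order expansion of $\sigma$ at the origin. First I would record that, by the standing conventions on the link, $\sigma(0)=\tfrac12$, so $\rho(0)=2\sigma(0)-1=0$; this already settles the point $t=0$ in the asserted inequality. For $t>0$, differentiability of $\sigma$ at $0$ gives $\sigma(t)=\tfrac12+\sigma'(0)\,t+o(t)$ as $t\to 0^+$, and hence $\rho(t)=2\sigma(t)-1=2\sigma'(0)\,t+o(t)$, i.e.\ $\rho(t)/t\to 2\sigma'(0)$ as $t\downarrow 0$.

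Next I would exploit the strict gap $2\sigma'(0)>\sigma'(0)>0$, which holds precisely because $\sigma'(0)>0$. Since $\rho(t)/t\to 2\sigma'(0)$, taking $\varepsilon=\sigma'(0)$ in the definition of the limit yields an $r>0$ such that $|\rho(t)/t-2\sigma'(0)|\le\sigma'(0)$, and in particular $\rho(t)/t\ge\sigma'(0)$, for all $t\in(0,r]$. Multiplying through by $t>0$ gives $\rho(t)\ge\sigma'(0)\,t$ on $(0,r]$, and together with the case $t=0$ this establishes $\rho(t)\ge c\,t$ on $[0,r]$ with $c=\sigma'(0)$. If one wants a sharper constant, the same argument with $\varepsilon=2\sigma'(0)-c'$ shows any $c'<2\sigma'(0)$ is admissible for a suitable $r$.

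There is essentially no hard step here: the only thing to watch is the factor of $2$ — it is $\rho'(0)=2\sigma'(0)$, not $\sigma'(0)$, that governs the slope of $\rho$ at the origin — so the choice $c=\sigma'(0)$ sits comfortably below the true slope and leaves a full $\sigma'(0)$ of slack, which is exactly what lets a single $o(t)$ estimate close the bound. I would also remark in passing that monotonicity of $\sigma$ (hence of $\rho$ on $\R_+$) and $\rho(0)=0$ are consistent with, but not needed for, this local estimate: the argument invokes only differentiability at the single point $0$ together with the sign of $\sigma'(0)$.
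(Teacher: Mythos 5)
Your proof is correct and follows essentially the same route as the paper: both expand $\sigma$ to first order at $0$, observe $\rho(t)=2\sigma'(0)t+o(t)$, and use the slack between $2\sigma'(0)$ and $\sigma'(0)$ to absorb the $o(t)$ term on a small interval $[0,r]$. Your phrasing via the limit of $\rho(t)/t$ is just a restatement of the paper's choice of $r$ with $|o(t)|\le\sigma'(0)\,t$.
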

\end{propbox}
\begin{proof}
Since $\sigma$ is differentiable at $0$ with $\sigma(0)=1/2$, we have the first-order expansion
$\sigma(t)=\tfrac12+\sigma'(0)\,t+o(t)$ as $t\downarrow 0$. Hence:
$$
\rho(t)=2\sigma(t)-1 = 2\sigma'(0)\,t + o(t).
$$
Choose $r>0$ such that $|o(t)|\le \sigma'(0)\,t$ for all $t\in[0,r]$. Then for $t\in[0,r]$, we have:
$$
\rho(t)\ \ge\ (2\sigma'(0)-\sigma'(0))\,t\ =\ \sigma'(0)\,t.$$
\end{proof}

\paragraph{Classical examples.}
We list standard probabilistic links $\sigma:\R\to[0,1]$ used in pairwise comparison models and verify that
they satisfy the condition of Lemma~\ref{lem:sigma-local-linear}.

\begin{itemize}
\item \textbf{Logistic / Bradley--Terry.}
For $\tau>0$, define:
$$
\sigma(u)\ :=\ \frac{1}{1+e^{-u/\tau}}.
$$
Then $\sigma(0)=\tfrac12$, $\sigma$ is nondecreasing, and $\sigma(-u)=1-\sigma(u)$.
Moreover,
$$
\sigma'(u)
=\frac{1}{\tau}\,\sigma(u)\bigl(1-\sigma(u)\bigr)
\quad\Rightarrow\quad
\sigma'(0)=\frac{1}{4\tau}>0.
$$

\item \textbf{Probit / Thurstone.}
For $\sigma_0>0$, define:
\[
\sigma_{\pro}(u)\ :=\ \Phi\!\left(\frac{u}{\sigma_0}\right),
\]
where $\Phi$ is the standard normal CDF. Then $\sigma_{\pro}(0)=\tfrac12$, $\sigma_{\pro}$ is nondecreasing,
and $\sigma_{\pro}(-u)=1-\sigma_{\pro}(u)$ by symmetry of the normal distribution.
Since $\Phi'(z)=\varphi(z)$ where $\varphi(z)=\frac{1}{\sqrt{2\pi}}e^{-z^2/2}$ is the standard normal pdf, we have
$$
\sigma_{\pro}'(u)=\frac{1}{\sigma_0}\,\varphi\!\left(\frac{u}{\sigma_0}\right)
\quad\Rightarrow\quad
\sigma_{\pro}'(0)=\frac{1}{\sigma_0\sqrt{2\pi}}>0.
$$

\item \textbf{Arctan.}
For $\tau>0$, define:
$$
\sigma_{\atan}(u)\ :=\ \frac12+\frac{1}{\pi}\arctan\!\left(\frac{u}{\tau}\right).
$$
Then $\sigma_{\atan}(0)=\tfrac12$, $\sigma_{\atan}$ is nondecreasing, and $\sigma_{\atan}(-u)=1-\sigma_{\atan}(u)$.
Moreover,
$$
\sigma_{\atan}'(u)=\frac{1}{\pi\tau}\cdot\frac{1}{1+(u/\tau)^2}
\quad\Rightarrow\quad
\sigma_{\atan}'(0)=\frac{1}{\pi\tau}>0,
$$
\end{itemize}

\paragraph{Resulting $\rho$ for these links.}
With $\rho(t)=2\sigma(t)-1$, the above examples yield, respectively,
$\rho(t)=\tanh\!\big(\tfrac{t}{2\tau}\big)$,
$\rho(t)=2\Phi(t/\sigma_0)-1$,
and $\rho(t)=\tfrac{2}{\pi}\arctan(t/\tau)$, all of which satisfy $\rho(t)\ge c t$ for $t$ in a neighborhood of $0$
by Lemma~\ref{lem:sigma-local-linear}.

\section{Controlled synthetic experiment: isolating the intrinsic-dimension scaling}
\label{app:synthetic-k-scaling}

We include a controlled synthetic experiment to isolate the dependence on the intrinsic dimension \(k\) predicted by the theory. Unlike the language-model and reinforcement-learning experiments, this experiment exactly matches the ridge setting analyzed in the paper and keeps the ambient dimension fixed.

We fix the ambient dimension to \(d=500\) and vary the intrinsic dimension over
\[
    k \in \{5,10,20,40,80\}.
\]
For each value of \(k\), we sample a matrix \(A_k \in \mathbb{R}^{k \times d}\) with orthonormal rows and consider the ridge objective
\[
    f_k(x) = g(A_k x)
    \quad \text{with} \quad
    g(z) = \sum_{i=1}^k \bigl(1-\cos(z_i)\bigr).
\]
This objective is \(1\)-smooth. Indeed,
\[
    \nabla^2 f_k(x)
    =
    A_k^\top \operatorname{diag}\bigl(\cos(A_k x)\bigr) A_k,
\]
and since \(A_k A_k^\top = I_k\), its operator norm is at most \(1\).

To make the optimization instances comparable across different values of \(k\), we initialize
\[
    x_0 = A_k^\top z_0,
    \qquad
    z_0 =
    \bigl(\arcsin(1/\sqrt{k}),\ldots,\arcsin(1/\sqrt{k})\bigr).
\]
Then \(A_k x_0=z_0\), and therefore
\[
    \|\nabla f_k(x_0)\|_2
    =
    \|\nabla g(z_0)\|_2
    =
    \left\|
    \left(\frac{1}{\sqrt{k}},\ldots,\frac{1}{\sqrt{k}}\right)
    \right\|_2
    =
    1.
\]
We run NCRS with target stationarity accuracy \(\epsilon=0.1\) under the two oracle models considered in the paper. For the uniform-margin oracle, each iteration uses one comparison. For the confidence-oracle model, we use the fixed-vote schedule
\[
    N = \left\lceil \frac{k}{\epsilon^2} \right\rceil
\]
comparisons per iteration, as suggested by the theory. In both cases, we use a \(k\)-normalized stepsize of the form
\[
    \alpha_k = \frac{c}{k},
\]
with the same constant \(c=0.8\) for all values of \(k\).

The results are reported in Table~\ref{tab:synthetic-k-scaling}. Under the uniform-margin oracle, the number of comparisons grows approximately linearly with \(k\), consistent with the \(O(k/(p^2\epsilon^2))\) complexity. Under the confidence oracle, the number of iterations also grows approximately linearly with \(k\), while the total number of comparisons grows approximately quadratically with \(k\), consistent with the \(O(k^2/\epsilon^4)\) complexity. This experiment is intended as a controlled validation of the theoretical intrinsic-dimension scaling, rather than as a practical efficiency comparison for fixed-vote confidence aggregation.

\begin{table}[h]
\centering
\caption{Controlled synthetic ridge experiment with fixed ambient dimension \(d=500\) and varying intrinsic dimension \(k\). The uniform-margin oracle uses one comparison per iteration. The confidence oracle uses \(N=\lceil k/\epsilon^2\rceil\) votes per iteration with \(\epsilon=0.1\).}
\label{tab:synthetic-k-scaling}
\begin{tabular}{c|ccc}
\toprule
\(k\) 
& Uniform margin comparisons 
& Confidence oracle iterations 
& Confidence oracle comparisons \\
\midrule
5  & 898   & 340  & 170000 \\
10 & 1780  & 695  & 695000 \\
20 & 3620  & 1346 & 2691000 \\
40 & 7414  & 2687 & 10748000 \\
80 & 14940 & 5414 & 43312000 \\
\bottomrule
\end{tabular}
\end{table}

\end{document}